\documentclass[11pt]{amsart}

\usepackage[draft]{changes}

\definechangesauthor[color=red]{pe}
\definechangesauthor[color=red]{xm}
\definechangesauthor[color=red]{hx}
\newcommand{\stkout}[1]{\ifmmode\text{\sout{\ensuremath{#1}}}\else\sout{#1}\fi}
\setdeletedmarkup{\stkout{#1}}
\colorlet{Changes@Color}{red}
%\usepackage{showkeys}

%\usepackage[notref]{showkeys}
%\linespread{1.5}
%\usepackage{cmbright}
%\usepackage{fourier}

%%%%%%%%%%%
%%%%%%%%%

\usepackage{amssymb}

\usepackage{graphics}
\usepackage{graphicx}
\usepackage{amsmath}
\usepackage{amsthm}
\usepackage{amsfonts}
\usepackage{mathtools}
\usepackage{xcolor}
\usepackage[english]{babel}
\usepackage[margin=1.0in]{geometry}
\usepackage[colorlinks=true,
        linkcolor=blue]{hyperref}
\usepackage{bbm}
\usepackage{verbatim}
\usepackage{extarrows}
\usepackage{blkarray}
%\usepackage{tensor}
%\usepackage{systeme}

%\allowdisplaybreaks
%\usepackage{phaistos}

%\usepackage{fourier}
\usepackage[T1]{fontenc}

\parindent 0 mm
\parskip 3 mm

\numberwithin{equation}{section}

\newtheorem{prop}{Proposition}
\newtheorem{lemma}[prop]{Lemma}

\newtheorem{thm}[prop]{Theorem}
\newtheorem*{thm*}{Theorem}
\newtheorem{cor}[prop]{Corollary}
\newtheorem{conj}[prop]{Conjecture}

\numberwithin{prop}{section}

\newtheorem{defn}[prop]{Definition}
\newtheorem*{defn*}{Definition}
\theoremstyle{definition}

\newtheorem*{ex*}{Example}
\newtheorem{rmk}[prop]{Remark}

\definecolor{c1}{rgb}{0.2,0.4,0.5}
\definecolor{c2}{rgb}{0.1,0.3,0.5}
\definecolor{c3}{rgb}{0.2,0.7,0.5}
\usepackage{tikz}

\def \k {K\"ahler }

\newcommand{\oo}[1]{\overline{#1}}

\newcommand{\dbar}{\oo\partial}

%\newcommand\bovermat[2]{%
%	\makebox[0pt][l]{$\smash{\overbrace{\phantom{%
%					\begin{matrix}#2\end{matrix}}}^{\text{#1}}}$}#2}
%\newcommand\bundermat[2]{%
%	\makebox[0pt][l]{$\smash{\underbrace{\phantom{%
%					\begin{matrix}#2\end{matrix}}}_{\text{#1}}}$}#2}
%\newcommand\partialphantom{\vphantom{\frac{\partial e_{P,M}}{\partial w_{1,1}}}}

%%%%%%%%%%%%%%%%%%%%%%%%%%%%%%%%%%%%%%%%%interior product
\DeclareFontFamily{U}{MnSymbolC}{}
\DeclareSymbolFont{MnSyC}{U}{MnSymbolC}{m}{n}
\DeclareFontShape{U}{MnSymbolC}{m}{n}{
	<-6>  MnSymbolC5
	<6-7>  MnSymbolC6
	<7-8>  MnSymbolC7
	<8-9>  MnSymbolC8
	<9-10> MnSymbolC9
	<10-12> MnSymbolC10
	<12->   MnSymbolC12}{}
\DeclareMathSymbol{\intprod}{\mathbin}{MnSyC}{'270}
%%%%%%%%%%%%%%%%%%%%%%%%%%%%%%%%%%%%%%%%%%%%

\DeclareMathOperator{\supp}{supp}

\DeclareMathOperator{\Ric}{Ric}

\DeclareMathOperator{\Rm}{Rm}

\DeclareMathOperator{\Vol}{Vol}

\begin{document}

\title[]{Bergman kernels over polarized K\"ahler manifolds, Bergman logarithmic flatness, and a question of Lu--Tian}

\begin{abstract}
Let $M$ be a complete \k manifold, and let $(L, h) \to M$ be a positive line bundle inducing a \k metric $g$ on $M$. We study two Bergman kernels in this setting: the Bergman kernel of the disk bundle of the dual line bundle $(L^*, h^*)$, and the Bergman kernel of the line bundle $(L^k, h^k)$, $k\geq 1$, twisted by the canonical line bundle of $(M, g)$. We first prove a localization result for the former Bergman kernel. Then we establish a necessary and sufficient condition for this Bergman kernel to have no logarithmic singularity, expressed in terms of the Tian--Yau--Zelditch--Catlin type expansion of the latter Bergman kernel. This result, in particular, answers a question posed by Lu and Tian. As an application, we show that if $(M, g)$ is compact and locally homogeneous, then the circle bundle of $(L^*, h^*)$ is necessarily Bergman logarithmically flat.

%tensoring the canonical line bundle of $(M, g)$. We prove a localization result for the first Bergman kernel. Then we establish a sufficient and necessary condition for the first Bergman kernel to have no log singularity, in terms of the Tian--Yau--Zelditch--Catlin type expansion of the second Bergman kernel. This answers a question of Lu--Tian. As an application of the two results, we show that if $(M, g)$ is compact and locally homogeneous, then the circle bundle of $(L^*, h^*)$ is necessarily Bergman logarithmically flat.
\end{abstract}
%\medskip
%\noindent
%{Key Words:} \quad Bergman kernel, algebraicity, ball quotient.
%\end{abstract}

%\subjclass[2020]{32W20 32V15 32Q20}

%\date{\today}

\author [Ebenfelt]{Peter Ebenfelt}
\address{Department of Mathematics, University of California at San Diego, La Jolla, CA 92093, USA}
\email{{pebenfelt@ucsd.edu}}

\author[Xiao]{Ming Xiao}
\address{Department of Mathematics, University of California at San Diego, La Jolla, CA 92093, USA}
\email{{m3xiao@ucsd.edu}}

\author [Xu]{Hang Xu}
\address{School of Mathematics (Zhuhai), Sun Yat-sen University, Zhuhai, Guangdong 519082, China}
\email{{xuhang9@mail.sysu.edu.cn}}

\thanks{The first author was supported in part by the NSF grant DMS-2453134 and DMS-2154368. The second author was supported in part by the NSF grant DMS-2045104. The third author was supported in part by the NSFC grant No. 12201040 and NSF Guangdong Province grant No. 2025A1515012071. }

\maketitle

\section{Introduction}

Let $\Omega$ be a $C^{\infty}$-smoothly bounded, strongly pseudoconvex domain in $\mathbb{C}^n$. We denote by $K_{\Omega}=K_{\Omega}(z, \oo{z})$ its Bergman kernel and  let $\rho$ be a defining function for $\Omega$ such that $\Omega=\{\rho>0\}$. By a classical result of Fefferman \cite{Fe}, $K_{\Omega}$ has the following asymptotic expansion in $\Omega$,
\begin{equation}\label{Fefferman}
	K_{\Omega}=\frac{\phi}{\rho^{n+1}}+\psi\log\rho,
\end{equation}
where $\phi$ and $\psi$ are $C^{\infty}$-smooth functions on $\oo{\Omega}$.
%The coefficients $\phi \text{ mod } O(\rho^{n+1})$ and $\psi \text{ mod } O(\rho^{\infty})$
The strong singularity $\phi\rho^{-n-1}$ and the logarithmic singularity $\psi\log\rho$ in this expansion
are determined by the local CR geometry of the boundary $\partial \Omega$ \cite{Fe, BEG94, Hi00}. Moreover, it is known that the function $\phi$ does not vanish on $\partial\Omega$ whereas $\psi$ may. In fact, it may happen that $\psi$ vanishes to infinite order along $\partial\Omega$ (i.e., $\psi=0 \mod O(\rho^{\infty})$). When this occurs, we say that $\partial\Omega$ is \emph{Bergman logarithmically flat}. Since Bergman logarithmic flatness depends only on the local CR geometry of $\partial \Omega$, we may introduce this notion for any (always $C^{\infty}$-smooth throughout this paper) strongly pseudoconvex real hypersurface.
%(Throughout this paper, we always assume that all hypersurfaces under consideration are smooth, even when not stated explicitly.)
More precisely, if $\Sigma$ is a germ of a strongly pseudoconvex real hypersurface in $\mathbb{C}^n$, then
we say that $\Sigma$ is \emph{Bergman logarithmically flat} if there is a smoothly bounded, strongly pseudoconvex domain $\Omega \subset \mathbb{C}^n$ with $\Sigma \subset \partial\Omega$ such that the coefficient function $\psi$ in the Fefferman expansion~\eqref{Fefferman} of the Bergman kernel $K_{\Omega}$ vanishes to infinite order along $\Sigma$.
By the localization property of the Bergman kernel~\cite{Fe, BoSj, En01, HuLi23},
%Since the vanishing order of
this condition is independent of the choice of the domain $\Omega$.
A strongly pseudoconvex real hypersurface $\Sigma$ in a complex manifold is said to be Bergman logarithmically flat if, for every $p \in \Sigma$, the germ of $\Sigma$ at $p$ is Bergman logarithmically flat in the above sense.

The prototype of a Bergman logarithmically flat hypersurface is the unit sphere, or more generally, any spherical hypersurface. By the latter we mean a hypersurface that is locally CR-diffeomorphic to a piece of the unit sphere. In the real three-dimensional case, i.e., for a real hypersurface $\Sigma$ in a complex manifold of dimension two, Graham--Burns \cite{Gra87b} and Boutet de Monvel \cite{Bou} showed that Bergman logarithmic flatness implies that $\Sigma$ must be spherical. In higher dimensions, however, this is no longer the case. Engli\v{s}--Zhang, in \cite{EnZh}, constructed Bergman logarithmically flat real hypersurfaces, even compact ones, that are not spherical. They showed that a certain unit circle bundle over any compact Hermitian symmetric space is Bergman logarithmically flat. This result yields examples of non-spherical Bergman logarithmically flat real hypersurfaces. The main goal of the present paper is to consider the more general situation of polarized unit circle bundles over a \k manifold $M$ and understand when this hypersurface is Bergman logarithmically flat in terms of the \k geometry of $M$. As a consequence, we also establish a direct generalization of the Engli\v{s}--Zhang result in \cite{EnZh} to the situation where $M$ is compact and locally homogeneous.

We shall now explain our results in more detail. We will primarily be interested in real hypersurfaces that arise as the unit circle bundle of a negative line bundle over a \k manifold. We first recall the notion of a polarized \k manifold.

\begin{defn}\label{Defn polarized manifold} {\rm
	If $(L, h) \to M$ is a positive Hermitian line bundle over a complex manifold $M$ and $g$ the K\"ahler metric on $M$ corresponding to the K\"ahler form
	\[
	\omega_g = \frac{1}{2} \, \mathrm{Ric}(L, h),
	\]
	then we say that $(L, h)$ \emph{polarizes} the K\"ahler manifold $(M, g)$ (or simply the \k metric $g$) and call the pair $(M, g; L, h)$ a \emph{polarized manifold}.}
\end{defn}

We emphasize that, in our definition, $(M, g)$ is not required to be compact or complete, in contrast to some conventions in the literature.
	
Now, let $(L, h)$ be a positive line bundle polarizing a K\"ahler manifold $(M, g)$ of complex dimension $n$.
Let $(L^*, h^* = h^{-1})$ be the dual bundle of $(L, h)$ and denote by
\[
D=D(L^*) := \{ v \in L^* \;:\; |v|_{h^*} < 1 \},
\quad
S=S(L^*) := \{ v \in L^* \;:\; |v|_{h^*} = 1 \}
\]
the disk and circle bundles of $L^*$, respectively, where $|v|_{h^*}$ denotes the norm of $v$ with respect to $h^*$.
It is well known that the negativity of $(L^*, h^*)$ implies that $S$ is strongly pseudoconvex.
Our goal, as stated above, is to characterize Bergman logarithmic flatness of $S$ in terms of the K\"ahler geometry of $(M, g)$.
This problem is ultimately related to the Bergman kernel of the disk bundle $D$.
Therefore, we shall first recall the definition, following Kobayashi \cite{Ko}, of the Bergman kernel of $D$.
Denote by $L^2_{(n+1, 0)}(D)$ the space of $L^2$ integrable $(n+1, 0)$ forms on $D$  equipped with the following inner product
\begin{equation*}
	(f, g)_{L^2_{(n+1, 0)}(D)}=(\sqrt{-1})^{(n+1)^2} \int_D f\wedge \oo{g}, \quad \forall f, g\in L^2_{(n+1, 0)}(D),
\end{equation*}
and define the Bergman space of $D$ to be
\begin{equation*}
	A^2_{(n+1, 0)}(D):=\left\{f\in L^2_{(n+1, 0)}(D): f \text{ is a holomorphic $(n+1, 0)$ form on } D \right\}.
\end{equation*}
The Bergman space $A^2_{(n+1, 0)}(D)$ is a separable Hilbert space. We shall assume that $A^2_{(n+1, 0)}(D)$ is not trivial. Taking any orthonormal basis $\{\varphi_k\}_{k=1}^d$ of $A^2_{(n+1, 0)}(D)$ (here $1\leq d\leq \infty$), we define the \emph{Bergman kernel form} of $D$ to be
\begin{equation*}
	K_D(z, w)=(\sqrt{-1})^{(n+1)^2} \sum_{k=1}^d \varphi_k(z) \wedge \oo{\varphi_k(w)}, \quad \forall z, w \in D.
\end{equation*}
Then, $K_D=K_D(z, z)$ is a real-valued, real-analytic $(n+1, n+1)$ form on $D$, which is independent of the choice of orthonormal basis.

It is natural to ask if $K_D$ admits a Fefferman type asymptotic expansion, similar to \eqref{Fefferman}, as $x$ approaches $S$, the topological boundary of $D$ in $L^*$. Corollary \ref{Cor Fefferman expansion} below yields an affirmative answer to this question, provided that the underlying complex manifold $M$ admits a complete \k metric $\widehat{g}$. We stress that this metric $\widehat{g}$ need not be the metric $g$ polarized by $(L, h)$. In particular, the \k manifold $(M,g)$ is not required to be complete. %in the setting of Theorem \ref{Thm localization} and Corollary \ref{Cor Fefferman expansion}.
We begin by establishing a localization result for the Bergman kernel $K_D$.
 %Here in the theorem, $\pi$ denotes the natural projection from the line bundles $(L, h)$ and $(L^*, h^*)$ to $M$. %We say $(U, z)$ be a trivializing coordinate chart of $(L,h)$, if $(U,z)$ is a coordinate chart of $M$, and there exists a frame $e_L$ of $L$ (equivalently, a frame $e_{L^*}$ of $L^*$) over $U$.

\begin{thm}\label{Thm localization}
Let $(M, g; L,h)$ be a polarized manifold, and let $(L^*, h^*)$ be the dual line bundle of $(L,h).$ Assume that $M$ admits some complete \k metric.
%over a complete \k manifold $M$ of complex dimension $n$.
%and let $(U, z)$ be a trivializing coordinate chart of $(L,h)$.
Let $\Omega$ be a relatively compact pseudoconvex domain in $L^*$ with smooth, strongly pseudoconvex boundary such that $\Omega\subseteq D$ and there exist $p\in S$ and a small neighborhood $W$ of $p$ in $L^*$ such that $W\cap \Omega=W\cap D$. Then, the difference of Bergman kernel forms $K_D-K_{\Omega}$ in $W\cap \Omega$ extends as a $C^{\infty}$-smooth form on $W\cap \oo{\Omega}$.
\end{thm}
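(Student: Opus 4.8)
The standard approach to localization of Bergman kernels (Fefferman, Boutet de Monvel–Sjöstrand, Engliš, Hsiao–Li) is to compare the Bergman space of the large domain $D$ with that of the small domain $\Omega$ by means of a $\dbar$-correction argument: given a holomorphic form concentrated near $p$ on one side, one cuts it off by a bump function and solves a $\dbar$-equation with an $L^2$-estimate to correct the error, producing a genuine holomorphic form on the other side differing from the original by something smooth near $p$. The technical subtlety here is that $D$ is noncompact (it is the disk bundle of $L^*$ over a possibly noncompact $M$), so one cannot directly invoke compactness to get the $\dbar$-solvability and the Kohn–Hörmander estimates on all of $D$; this is exactly where the hypothesis that $M$ carries \emph{some} complete \k metric $\widehat g$ enters.

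**Key steps, in order.**

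First I would fix a cutoff $\chi \in C_c^\infty(W)$ with $\chi \equiv 1$ near $p$, and for $f \in A^2_{(n+1,0)}(D)$ consider $\chi f$, whose $\dbar$ is supported in $(\supp d\chi)\cap D \subset\subset \Omega$, away from $S$. The goal is to solve $\dbar u = \dbar(\chi f)$ on $\Omega$ with good estimates and with $u$ smooth up to $\partial\Omega$ near $p$ (possible because the datum vanishes near $p$), so that $\chi f - u$ is a holomorphic $(n+1,0)$ form on $\Omega$ agreeing with $f$ near $p$ modulo smooth; symmetrically, one solves on $D$ to push forms on $\Omega$ into $A^2_{(n+1,0)}(D)$. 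The crucial input on the noncompact side is an $L^2$-existence theorem for $\dbar$ on the disk bundle $D$: here I would use the complete \k metric $\widehat g$ on $M$ together with the metric $h^*$ on $L^*$ to build a complete \k metric on the total space $L^*$ (e.g. of the form $\pi^*\widehat g + $ (vertical piece), suitably adapted so that the fibers are complete and $D$ becomes a pseudoconvex domain with a bounded plurisubharmonic exhaustion near its boundary), and then apply Hörmander/Demailly $L^2$-estimates for $\dbar$ on complete \k manifolds; the strong pseudoconvexity of $S$ provides the needed curvature positivity near the boundary, while a large weight handles the interior. Once $\dbar$ is solvable on both $D$ and $\Omega$ with $L^2$ control, the interior elliptic regularity of $\dbar$ (the Kohn–Morrey–Hörmander subelliptic estimates up to the strongly pseudoconvex boundary, applied where the $\dbar$-datum is zero) gives that the corrections are $C^\infty$ up to $S$ in $W$. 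Finally, writing the reproducing property of $K_D$ and $K_\Omega$ via their respective orthogonal projections and tracking the two correction maps, one gets that $K_D - K_\Omega$, restricted to $W\cap\Omega$, is given by a kernel built from the smooth correction terms, hence extends smoothly to $W\cap\oo\Omega$.

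**Where the main difficulty lies.**

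The routine part is the cutoff-and-correct bookkeeping and the boundary elliptic regularity, which is classical for strongly pseudoconvex boundaries. The genuine obstacle is establishing the $L^2$ $\dbar$-theory on the \emph{noncompact} disk bundle $D$: one must produce a complete \k (or at least complete Hermitian with controlled torsion) metric on the total space of $L^*$ for which $D$ is pseudoconvex and for which Hörmander-type estimates with a plurisubharmonic weight are available, \emph{and} one must ensure the weight can be chosen to localize the solution near $p$ without destroying completeness. Relatedly, one needs that $A^2_{(n+1,0)}(D)$ and $A^2_{(n+1,0)}(\Omega)$ are comparable as function spaces near $p$ — i.e. that restriction induces the expected relation on germs — which again relies on the solvability statement. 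I expect the bulk of the work in the paper's actual proof to be packaging this $L^2$-existence result on $D$ (using the assumed complete \k metric on $M$) in a form strong enough to run the standard localization machinery; everything downstream of that is essentially the Boutet de Monvel–Sjöstrand / Engliš argument adapted to the bundle setting.
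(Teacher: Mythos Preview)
Your strategy---cut off, solve $\dbar$ on each side, and compare the two Bergman projections---is the classical Boutet de Monvel--Sj\"ostrand paradigm, and with a careful construction of a complete K\"ahler metric on $D$ (for instance $\pi^*\widehat\omega+\sqrt{-1}\,\partial\dbar(-\log\rho)$) it could plausibly be carried through. But it is \emph{not} what the paper does, and the obstacle you single out as the crux---setting up $L^2$ $\dbar$-theory directly on the noncompact disk bundle $D$---is entirely sidestepped.

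The paper instead exploits the $S^1$-action on the fibers. Fiberwise rotation yields an orthogonal decomposition $A^2_{(n+1,0)}(D)=\bigoplus_{m\ge 0}A^2_{m+1}(D)$, and each summand is shown to be unitarily isomorphic to $A^2(M,L^{m+1}\otimes\mathcal{C}_M)$. This produces the exact identity (Theorem~\ref{Thm relation between two BKs})
\[
K_D=\Bigl(\sum_{k\ge 0}\tfrac{k+1}{2^{n+1}\pi}(1-\rho)^k\,B_{k+1}\Bigr)T,
\]
and the same formula holds for $K_{D_{\mathcal B}}$ over a small Euclidean ball $\mathcal B\subset M$, with $B_{k+1}$ replaced by $\widehat B_{k+1}$. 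The complete K\"ahler hypothesis on $M$ enters only through Proposition~\ref{Prop TYZ expansion}, whose proof uses Demailly's $L^2$ estimates \emph{on $M$}, not on $D$: because the expansion coefficients $a_j$ are \emph{universal} polynomials in the curvature of $g$, they coincide for $M$ and for $\mathcal B$, hence $\|B_k-\widehat B_k\|_{C^m}=O(k^{-N})$ for every $N$, and termwise summation shows $K_D-K_{D_{\mathcal B}}$ extends smoothly up to $S_{\mathcal B}$ (Proposition~\ref{Prop localize BK to small disk bundle}). The remaining comparisons---$K_{D_{\mathcal B}}$ against an auxiliary $K_{\Omega_0}$, and $K_{\Omega}$ against $K_{\Omega_0}$---involve only bounded domains in $\mathbb C^{n+1}$ and are handled by the classical results of Fefferman, Boutet de Monvel--Sj\"ostrand, Engli\v{s}, and Hsiao--Li that you cite.

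So the idea missing from your proposal is the reduction, via the circle action, from the disk bundle to line-bundle Bergman kernels on the base, combined with the universality of the TYZ coefficients (Remark~\ref{rmk asymptotic expansion application}). Your direct $\dbar$-approach would be closer in spirit to the classical localization proofs and is more general in that it does not use the bundle structure; the paper's route has the structural payoff that the very same identity for $K_D$ is what later drives Theorem~\ref{Thm the coeffiecients relation}, linking the Fefferman coefficient $\Psi$ to the $a_j$.
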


%\begin{rmk}
%The conclusion in Theorem \ref{Thm localization} means the following. Fix any $q \in W \cap \partial \Omega.$ Pick a chart $(U, z)$ of $M$ with $q \in \pi^{-1}(U)$ and a frame $e_{L^*}$ of $L^*$ over $U$, so that we have a trivialization for $L^*$ over $U$: $\pi^{-1}(U)=\{\xi e_{L^*}(z): (z, \xi)\in U\times \mathbb{C}\}$. Here $\pi$ is the natural projection from $L^*$ to $M$.  Write
%\begin{align*}
		%K_D&=i^{(n+1)^2} \widetilde{K}_D ~dz\wedge d\xi \wedge d\oo{z} \wedge d\oo{\xi} \quad \text{ on } \pi^{-1}(U)\cap D,\\
		%K_{\Omega}&=i^{(n+1)^2} \widetilde{K}_{\Omega} ~dz\wedge d\xi\wedge d\oo{z}\wedge d\oo{\xi} \quad \text{ on } \pi^{-1}(U) \cap \Omega.
	%\end{align*}
	%Then the function $\widetilde{K}_D-\widetilde{K}_{\Omega}$, defined on $W \cap \Omega \cap \pi^{-1}(U),$ extends $C^{\infty}$-smoothly across $q$.
%\end{rmk}

We remark that if $M$ is compact, the conclusion in Theorem \ref{Thm localization} also follows from earlier localization results in \cite{Fe, BoSj, HuLi23}. These results, however, do not apply when $M$ is noncompact.

Using Theorem \ref{Thm localization}, we shall show that $K_D$
admits a Fefferman type asymptotic expansion. In order to do this, we shall define a canonical $(n+1, n+1)$-form associated to $(L^*, h^*)$. In what follows and throughout the paper, we shall say that $(U, z)$ is a trivializing coordinate chart of $(M, g; L,h)$ if $(U,z)$ is a coordinate chart of $M$ and there exists a frame $e_L$ of $L$ (equivalently, a frame $e_{L^*}$ of $L^*$) over $U$. Let $(M, g; L,h)$ be a polarized manifold, $(L^*, h^*)$ the dual line bundle of $(L,h)$, and $(U, z)$ a trivializing coordinate chart of $(M, g; L, h)$ with a frame $e_{L^*}$ of $L^*$ over $U$, so that
\begin{equation*}
	\pi^{-1}(U)=\left\{ \xi e_{L^*}(z): (z, \xi)\in U\times \mathbb{C} \right\}.
\end{equation*}
Let $e^{\phi(z)}=h^*(e_{L^*}(z), e_{L^*}(z) )$ on $U$, and recall that $g(z)= (g_{i\bar{j}}(z))_{1\leq i, j\leq n}$ is the metric associated to the \k form $\omega_g=\frac{1}{2}\Ric(L, h)$ on $U$ (i.e., $g_{i\oo{j}}=\partial_{z_i}\partial_{\oo{z_j}}\phi$ and $\omega_g=\frac{\sqrt{-1}}{2}\partial\dbar\phi$). Introduce the locally defined $(n+1, n+1)$-form
\begin{equation}\label{canonical top form}
	T(z, \xi)=(\sqrt{-1})^{(n+1)^2}  e^{\phi(z)} \det g(z) ~dz\wedge d\xi\wedge d\oo{z}\wedge d\oo{\xi}.
\end{equation}
%\added{Added $i^{(n+1)^2}$ to the RHS in order to match up the relation \eqref{relation between two BKs}.}
We shall verify in Section \ref{Sec 2} (see Proposition \ref{prop_T}) that the definition of $T$ is independent of the choices of local coordinate chart $(U, z)$ and the frame $e_{L^*}$. As a result, $T$ given by \eqref{canonical top form} is a well-defined global smooth $(n+1, n+1)$-form on $L^*$.

\begin{defn}\label{Defn canonical form} {\rm The globally defined $(n+1, n+1)$-form $T$ on $L^*$ defined locally by \eqref{canonical top form} is called \emph{the canonical top form of $(L^*, h^*)$}.}
\end{defn}

Using the notation just introduced, we now state the following Fefferman type expansion for the Bergman kernel form $K_D$ of the disk bundle $D$.
In what follows, $\oo{D}=D\cup S$ denotes the topological closure of $D$ in $L^*.$

\begin{cor}\label{Cor Fefferman expansion}
%Let $(M,g;L,h)$ and $(L^*, h^*)$ be as in Theorem $\ref{Thm localization}$.
%Again assume $M$ admits some complete \k metric.
Let $(M, g; L,h)$ be a polarized manifold, and let $(L^*, h^*)$ be the dual line bundle of $(L,h).$ Assume that $M$ admits some complete \k metric.
Let $\rho(v)=1-|v|_{h^*}^2$ for $v\in L^*$ and denote by $T$ the canonical top form of $(L^*, h^*).$ Then, the Bergman kernel form $K_D$ of $D$ satisfies
	\begin{equation}\label{Fefferman expansion for the disk bundle}
		K_D=\Bigl(\frac{\Phi}{\rho^{n+2}}+\Psi\log\rho\Bigr)T \quad \text{ on } D
	\end{equation}
	for some functions $\Phi, \Psi \in C^{\infty} (\oo{D})$.
	
Furthermore, the choices of $\Phi$ and $\Psi$ in \eqref{Fefferman expansion for the disk bundle} are unique, up to the addition of a smooth form in $C^{\infty}(\oo{D})$ that vanishes to infinite order along $S$ (i.e., $O(\rho^{\infty})$ along $S$). Moreover, $\Psi$ vanishes to infinite order along an open piece $\Sigma\subset S$ (i.e., $\Psi=O(\rho^{\infty})$ along $\Sigma$) if and only if $\Sigma$ is Bergman logarithmically flat.
\end{cor}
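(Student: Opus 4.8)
The plan is to bootstrap from Fefferman's expansion \cite{Fe} via the localization result of Theorem~\ref{Thm localization}, patch the resulting local expansions into globally defined coefficients, and then read off the flatness criterion from an elementary uniqueness lemma. \textbf{Step 1 (local expansion).} First I would fix $p\in S$ and a trivializing chart $(U,z)$ with frame $e_{L^*}$, so that $\pi^{-1}(U)\cong U\times\mathbb{C}$ with coordinates $(z,\xi)$, $D\cap\pi^{-1}(U)=\{e^{\phi(z)}|\xi|^2<1\}$, and $\rho=1-e^{\phi(z)}|\xi|^2$ is a local defining function for $D$ at $p$. By Theorem~\ref{Thm localization} there is a relatively compact, smoothly bounded, strongly pseudoconvex $\Omega\subseteq D$ and a neighborhood $W$ of $p$ with $W\cap\Omega=W\cap D$ such that $K_D-K_\Omega$ is smooth on $W\cap\oo\Omega$; by Fefferman's expansion \cite{Fe} (applied to the strongly pseudoconvex $\Omega\subset L^*$, or, after a standard further localization using the localization property \cite{Fe,BoSj,En01,HuLi23}, to a bounded domain $\Omega'\subset\mathbb{C}^{n+1}$ agreeing with $D$ on $W$), written with respect to $\rho$ and the Euclidean volume form $dV=(\sqrt{-1})^{(n+1)^2}dz\wedge d\xi\wedge d\oo z\wedge d\oo\xi$, together with the identity $T=e^{\phi}\det g\cdot dV$ (with $e^{\phi}\det g>0$ smooth), one gets $K_D=(\Phi_p/\rho^{n+2}+\Psi_p\log\rho)\,T$ on $W\cap D$ with $\Phi_p,\Psi_p\in C^\infty(W\cap\oo D)$.

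\textbf{Step 2 (globalization) and Step 3 (uniqueness).} Next I would cover $\oo D$ by open sets $W_0,W_1,W_2,\dots$, where $\oo{W_0}\cap S=\emptyset$ and each $W_j$, $j\geq1$, is a neighborhood as in Step~1, write $K_D/T=\Phi_j/\rho^{n+2}+\Psi_j\log\rho$ on $W_j\cap D$ for $j\geq1$ while $K_D/T$ is smooth on $W_0$, and, for a subordinate partition of unity $\{\chi_j\}$, set $\Psi:=\sum_{j\geq1}\chi_j\Psi_j$ and $\Phi:=\sum_{j\geq1}\chi_j\Phi_j+\chi_0\,\rho^{n+2}(K_D/T)$; these lie in $C^\infty(\oo D)$ and satisfy $\Phi/\rho^{n+2}+\Psi\log\rho=K_D/T$ on $D$, which is \eqref{Fefferman expansion for the disk bundle}. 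For uniqueness I would prove the lemma: if $f,g\in C^\infty(\oo D)$ and $f/\rho^{n+2}+g\log\rho\equiv0$ near $S$, then $f,g=O(\rho^\infty)$ along $S$; indeed $f=-g\,\rho^{n+2}\log\rho$, and if $g$ vanished to some finite order $m$ along $S$ near a point, the right-hand side would have leading behavior comparable to $\rho^{n+2+m}\log\rho$, which is not $C^\infty$ up to $S$, contradicting smoothness of $f$, so $g=O(\rho^\infty)$ and then $f=O(\rho^\infty)$. Applying this to the difference of two representations yields uniqueness of $(\Phi,\Psi)$ modulo $O(\rho^\infty)$.

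\textbf{Step 4 (logarithmic flatness).} For the last claim I would take $\Sigma\subset S$ open and $p\in\Sigma$. In the notation of Step~1, Fefferman's expansion reads $K_{\Omega'}=(\phi_{\Omega'}/\rho^{n+2}+\psi_{\Omega'}\log\rho)\,dV$; comparing with $K_D=(\Phi/\rho^{n+2}+\Psi\log\rho)T$ on $W\cap D$, using $T=e^{\phi}\det g\,dV$ and the smoothness of $K_D-K_{\Omega'}$ across $W\cap S$, the lemma of Step~3 gives $e^{\phi}\det g\cdot\Psi-\psi_{\Omega'}=O(\rho^\infty)$ near $p$. Since $e^{\phi}\det g>0$ is smooth, $\Psi=O(\rho^\infty)$ near $p$ if and only if $\psi_{\Omega'}=O(\rho^\infty)$ near $p$; and since $\Sigma\cap W\subset\partial\Omega'$, the latter is exactly the assertion that the germ of $S$ at $p$ is Bergman logarithmically flat, this being independent of the ambient domain by the localization property. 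As $p\in\Sigma$ was arbitrary, $\Psi=O(\rho^\infty)$ along $\Sigma$ if and only if $\Sigma$ is Bergman logarithmically flat.

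\textbf{Main obstacle.} The substantive input — localizing $K_D$ when $M$ may be noncompact — is already carried by Theorem~\ref{Thm localization}; granting it, what remains is bookkeeping: passing from the disk bundle to an honest bounded domain in $\mathbb{C}^{n+1}$ so Fefferman's theorem applies, converting consistently between the canonical top form $T$ and Euclidean volume across charts, and patching in Step~2. The one point requiring genuine (if elementary) care is the uniqueness lemma of Step~3, since it simultaneously pins down $\Phi,\Psi$ up to $O(\rho^\infty)$ and provides the mechanism by which logarithmic flatness of the hypersurface $S$ is transferred to the single globally defined coefficient $\Psi$.
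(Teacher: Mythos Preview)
Your proposal is correct and follows essentially the same approach as the paper: localize via Theorem~\ref{Thm localization} to apply Fefferman's expansion near each boundary point, patch with a partition of unity (including an interior set where $K_D/T$ is already smooth), and deduce uniqueness and the flatness criterion from the elementary lemma that $f+g\rho^{n+2}\log\rho=0$ with $f,g$ smooth forces both to be $O(\rho^\infty)$. The only cosmetic difference is that the paper cites \cite[Lemma~2.2]{FuWo} for this last step whereas you sketch a direct argument, and the paper writes Fefferman's expansion directly against $T$ rather than tracking the factor $e^\phi\det g$ explicitly.
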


%\begin{rmk}
%We can also pick $\Phi$ and $\Psi$ in such a way that they are invariant under the fiber rotation action of $L^*$. Then we can expand $\Phi$ and $\Psi$ in terms of powers of $\rho$. See Theorem \ref{Thm the coeffiecients relation} below and $\S$ \ref{Sec 3} for more details.
%\end{rmk}

%In the context of Theorem \ref{Thm localization}, we recall another closely related notion of Bergman kernel. To this end, we begin by reviewing the notion of a polarized manifold.
%\begin{defn}
	%Let $(L, h)\rightarrow M$ be a positive Hermitian line bundle over a complex manifold $M$. Let $g$ be the \k metric on $M$ corresponding to the \k form $\omega_g=\frac{1}{2\pi}\Ric(L, h)$. Then we say $(L, h)$ polarizes the metric $g$, and the pair $(M, g; L, h)$ is called a polarized manifold. We emphasize that in our definition $M$ is not necessarily compact (while some other literature do assume so).
%\end{defn}

In view of Corollary \ref{Cor Fefferman expansion}, to study Bergman logarithmic flatness of $S$, it is equivalent to investigate when $\Psi$ vanishes to infinite order along $S$. The main point here is the localization in Theorem \ref{Thm localization}, since the definition of Bergman logarithmic flatness is local and involves the Bergman kernel of a domain fully contained within a coordinate chart at a boundary point, whereas the coefficient function $\Psi$ is determined by the global Bergman kernel form and so is more closely related to the \k geometry of $M$.

For the purpose of studying $\Psi$, we will review and utilize another notion of a Bergman kernel in the setting of a polarized manifold. Let $(M, g; L, h)$ be a polarized manifold and $(E, h_E)$ a Hermitian vector bundle over $M$ with a Hermitian metric $h_E$. Set $E_k=L^k\otimes E$ for $k\in \mathbb{Z}^+$. There is a naturally induced metric $h_k=h^k\otimes h_E$ on $E_k$. The corresponding Bergman space $A^2(M, E_k)$ consists of $L^2$-integrable holomorphic sections of $E_k$ with respect to the following inner product:
\begin{equation}\label{L2 inner product}
	(s, s')_{L^2}=\int_M h_k\left(s(z), s'(z)\right) dV_g, \quad \forall s, s' \in A^2(M, E_k).
\end{equation}
Here $dV_g=\frac{1}{n!}\omega_g^n$ is the volume form of $g$, with $\omega_g$ as defined in Definition \ref{Defn polarized manifold}. We note that $A^2(M, E_k)$ is a Hilbert space, and as before, we shall assume that $A^2(M, E_k)\neq \{0\}$ and let $\{s_{k, 1}, \cdots, s_{k, d}\}$ be an orthonormal basis of $A^2(M, E_k)$ with respect to the above inner product, where $1\leq d=d(k)\leq \infty$. The Bergman kernel of $(E_k, h_k)$ is defined as the following section of $E_k\otimes \oo{E_k}$:
\begin{equation*}
	K_{E_k}(z, w)=\sum_{j=1}^d s_{k, j}(z)\otimes \oo{s_{k, j}(w)}.
\end{equation*}
The \emph{Bergman kernel function} of $(E_k, h_k)$ is defined to be the pointwise norm of $K_{E_k}(z, z)$,
\begin{equation*}
	B_{E_k}(z):=\left|K_{E_k}(z, z)\right|_{h_k}=\sum_{j=1}^d\bigl|s_{k,j}\bigr|^2_{h_k}.
\end{equation*}
The study of the asymptotic expansion of $B_{E_k}(z)$ as $k \to \infty$ is a classical and important problem in complex analysis and geometry. This line of research goes back to the seminal works \cite{Ti90, Ze98, Ca99, Lu00} on the Tian--Yau--Zelditch--Catlin expansion, where the primary case of interest was for compact manifolds $M$. Their results have motivated extensive and far-reaching developments. Engli\v{s} \cite{En02} established an asymptotic expansion in the case where $M$ is a pseudoconvex domain in Euclidean space, using a Laplace integral method. Dai--Liu--Ma \cite{DLM04, DLM06} obtained an asymptotic expansion on compact symplectic manifolds using the heat kernel.
Berman, Berndtsson and Sj\"ostrand \cite{BBSj08} gave a direct approach to the Tian--Yau--Zelditch--Catlin expansion by constructing a local Bergman kernel and exploiting its local reproducing property. This method was further developed by Deleporte--Hitrik--Sj\"ostrand \cite{DHSj22} and Hitrik--Stone \cite{HS22}. Note that \cite{BBSj08} treated the case where $M$ is a compact complex manifold, while \cite{DHSj22} and \cite{HS22} considered the case where $M$ is a pseudoconvex domains in $\mathbb{C}^n$.
In \cite{HsMa14}, Hsiao--Marinescu studied Bergman kernels over general (not necessarily compact) complex manifolds. They established asymptotics for the Bergman kernel defined with respect to the $L^2$ inner product \eqref{L2 inner product}, where $g$ is replaced by another complete metric that is not necessarily polarized by $(L,h)$.
Further work can also be found in \cite{Wa03, ShZe02, MM08, LiuLu11, Xu12, MaSa24} and references therein. Many researchers have contributed to this topic, and the list here is by no means intended to be complete.
The reader is referred to the monograph by Ma and Marinescu \cite{MM07} for a comprehensive exposition of the asymptotic expansion of the Bergman kernel and its applications. %We can also follow the introduction of \cite{Xu12} to include more references.
%\added{Here are some more references on the Tian--Yau--Zelditch--Catlin expansion. In \cite{MM07}, Theorem 6.1.1 proves the expansion for complete Hermitian metric. In \cite{HsMa14}, Theorem 10.13 proves the expansion for compete \k metric. On page 12, the authors reviewed some results for the expansion on complete manifolds in (iii). On the other hand, \cite{En00, En02} study the asymptotic expansion for the domain case.} \added{\cite{BBSj08} provides a direct approach to obtaining the Tian--Yau--Zelditch--Catlin expansion, through constructing the local Bergman kernel based on the local reproducing property, which is further developed by \cite[Appendix]{DHSj22} and \cite[Section 5]{HS22}. Note that \cite{BBSj08} treated the case where the base manifold is compact, while \cite[Appendix]{DHSj22} and \cite[Section 5]{HS22} considered the case where the base manifold is a pseudoconvex domain in $\mathbb{C}^n$.}\replaced{The reader is referred to \cite{MM07} for a comprehensive introduction to this Bergman kernel and its applications. We can also follow the introduction of \cite{Xu12} to include more references.}{See also many far-reaching recent works by }

The particular case of interest to us is when $E$ is the canonical line bundle $\mathcal{C}_M$ of $(M, g)$, equipped with the natural Hermitian metric induced by $g$. This case is critical for our investigation, as it is intimately linked to the Bergman kernel form of $D$. We emphasize that, for our purposes, it is natural to use the metric $g$ polarized by $(L,h)$ (rather than other Kähler metrics on $M$) to define both the $L^2$ inner product \eqref{L2 inner product} on the Bergman space and the Hermitian metric on $\mathcal{C}_M$, since our goal is to study Bergman logarithmic flatness of $S$ in terms of the Kähler geometry of the induced metric $g$.
 %\added{In this case, the $L^2$ inner product \eqref{L2 inner product} only depends on the Hermitian metric $h$ of $L$ but is independent from the \k metric $g$, and so does the associated Bergman kernel $K_k(z, z)$ of $L^k\otimes \mathcal{C}_M$.}
We shall need a Tian--Yau--Zelditch--Catlin type expansion of the Bergman kernel of $L^k \otimes \mathcal{C}_M$, as stated in Proposition~\ref{Prop TYZ expansion} below. This result should be more or less known to experts. %, and it can be proved, for instance, by the approach in \cite{BBSj08} or the method in \cite{HsMa14}, with minor adjustments.
For the convenience of the reader and the sake of being self-contained, we sketch a proof in the appendix by following the framework of \cite{BBSj08} (and also \cite{DHSj22, HS22}) with minor adjustments. In fact, in the appendix we will establish a more general result, which will be used in our subsequent paper on the Szeg\H o kernel \cite{EXX25}.

In the following result, we shall again assume that $M$ admits some complete \k  metric, and this metric is not necessarily the metric $g$ polarized by $(L, h)$.

\begin{prop}\label{Prop TYZ expansion}
Let $(M, g; L,h)$ be a polarized manifold, and assume that $M$ admits some complete \k metric. Denote by $H=(\det g)^{-1}$ the Hermitian metric on the canonical line bundle $\mathcal{C}_M$ of $M$ induced by $g$. Consider the Bergman kernel function $B_k=B_{E_k}$ of $(E_k=L^k\otimes \mathcal{C}_M, h_k=h^k\otimes H),$ where $k\in \mathbb{Z}^+$. Then, there exist unique (real-valued) smooth functions $a_j$, for $j\geq 0$, such that $B_k$ has the following asymptotic expansion:
	\begin{equation}\label{TYZ expansion intro}
		B_k(z)\sim \left(\frac{k}{\pi}\right)^n\sum_{j=0}^{\infty} \frac{a_j(z)}{k^j}, ~\text{ locally uniformly for } z\in M.
	\end{equation}
Moreover, $a_0=1$ and the functions $a_{j}$, for $j\geq 1$, are universal polynomials in the curvature of $(M, g)$ and its covariant derivatives.
\end{prop}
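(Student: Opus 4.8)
The plan is to follow the local Bergman kernel approach of Berman--Berndtsson--Sj\"ostrand \cite{BBSj08}, adapted to the noncompact setting via the complete \k metric hypothesis, and to keep track of the extra canonical-bundle twist $\mathcal{C}_M$. First I would fix a point $p\in M$ and a trivializing coordinate chart $(U,z)$ centered at $p$, with a frame $e_L$ of $L$ over $U$ and the induced frame $e_{\mathcal{C}_M}=dz_1\wedge\cdots\wedge dz_n$. Writing $h(e_L,e_L)=e^{-\phi}$ (so $\omega_g=\frac{\sqrt{-1}}{2}\partial\dbar\phi$) and $H=(\det g)^{-1}$ for the metric on $\mathcal{C}_M$, the weighted $L^2$ problem becomes: holomorphic functions $f$ on $U$ with $\int_U |f|^2 e^{-k\phi}(\det g)^{-1}\,dV_g = \int_U |f|^2 e^{-k\phi}\,dz\,d\bar z$ (up to constants), since $dV_g=(\det g)\,(\tfrac{\sqrt{-1}}{2})^n dz\wedge d\bar z$ and the $(\det g)^{-1}$ from $H$ cancels the Jacobian. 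This cancellation is precisely why the canonical-bundle twist yields the clean flat-measure weighted problem, and it is the structural reason the expansion has the stated universal form.

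Next I would construct, following \cite{BBSj08, DHSj22, HS22}, a \emph{local} Bergman kernel $K^{\mathrm{loc}}_k(z,w)$ associated to the weight $k\phi$ on a small ball $B(p,r)$: this is a classical analytic (semiclassical) kernel of the form $e^{k\Psi(z,\bar w)}\,b_k(z,\bar w)$, where $\Psi$ is the almost-holomorphic extension of $\phi$ (the "polarization" $2\Psi(z,\bar w)-\phi(z)-\phi(w)$ is negative off the diagonal), and $b_k\sim k^n\sum_{j\geq 0} b_j k^{-j}$ is determined recursively so that $K^{\mathrm{loc}}_k$ reproduces holomorphic functions modulo $O(k^{-\infty})$ errors. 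Evaluating on the diagonal and matching normalizations gives a formal expansion $B^{\mathrm{loc}}_k(p)\sim (k/\pi)^n\sum_{j\geq 0} a_j(p) k^{-j}$ with $a_0=1$; the coefficients $a_j$ are obtained by solving transport equations whose inputs are the Taylor coefficients of $\phi$ at $p$ in \k normal coordinates, hence they are universal polynomials in the curvature of $(M,g)$ and its covariant derivatives. This step is essentially bookkeeping once the framework of \cite{BBSj08} is in place, and since the appendix establishes a more general statement, I would simply cite it here.

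The main obstacle is passing from the local to the global Bergman kernel: one must show $B_k(p) = B^{\mathrm{loc}}_k(p) + O(k^{-\infty})$, locally uniformly in $p$, which requires (i) an a priori $L^2$ estimate forcing global sections to be well-approximated near $p$ by the local model, and (ii) control at infinity so that the far-away part of $M$ contributes negligibly. Here is where completeness of some \k metric $\widehat g$ on $M$ enters: it is exactly the hypothesis needed to run H\"ormander-type $\dbar$ estimates with the weight $k\phi$ (using that $\sqrt{-1}\,\partial\dbar(k\phi) = k\,\omega_g \geq 0$ and a small multiple of $\widehat\omega$ can be absorbed), producing a global solution to $\dbar u = \dbar(\chi K^{\mathrm{loc}}_k(\cdot,w))$ with an $O(k^{-\infty})$ bound, where $\chi$ is a cutoff; this is the standard Demailly--H\"ormander argument that works on complete \k manifolds even when $M$ is noncompact. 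Subtracting this correction from the cutoff local kernel yields a global holomorphic section agreeing with $K^{\mathrm{loc}}_k(\cdot,w)$ up to $O(k^{-\infty})$, and the extremal/reproducing characterization of $B_k$ then gives the two-sided comparison. Uniqueness of the $a_j$ is immediate from the uniqueness of asymptotic expansions, and reality follows since $B_k$ is real-valued for each $k$.
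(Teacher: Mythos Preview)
Your proposal is correct and follows essentially the same approach as the paper: construct the local reproducing kernel of \cite{BBSj08} and then globalize via Demailly's $L^2$ estimates for $\dbar$, using the complete K\"ahler metric hypothesis to justify the estimate on noncompact $M$. The paper in fact proves a slightly more general version first (for $L^k\otimes L'$ with any Hermitian line bundle $(L',h')$ satisfying $\Ric(L',h')+\Ric(M,g)\geq -C_0 g$) and then specializes to $(L',h')=(\mathcal{C}_M,H)$, where $C_0=0$; one minor imprecision in your account is that the complete metric $\widehat g$ is not ``absorbed'' into the curvature but rather serves only to ensure the $\dbar$-machinery applies, while the operator bound $A^{-1}\leq k^{-1}$ comes from the positivity of $k\,\omega_g$ alone.
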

%\added{We will follow the notation in \cite{LT04}, that is, $\omega_g=\frac{1}{2\pi}\sqrt{-1}\partial\dbar\phi$.}

\begin{rmk}\label{rmk asymptotic expansion meaning}
	The asymptotic expansion \eqref{TYZ expansion intro} means that, for any $N\geq 0, m\geq 0$ and $X \subset\subset M$, there exists $C_{N, m, X}$ depending on $N, m, X$, $(M, g)$ and $(L, h),$ while independent of $k$, such that
	\begin{equation*}
		\Bigl\|B_k-\bigg(\frac{k}{\pi}\bigg)^n\sum_{j=0}^{N} \frac{a_j}{k^j} \Bigr\|_{C^m(X)}\leq C_{N, m, X} k^{n-N-1}, \quad \forall k\geq 1.
	\end{equation*}
	%\added{I agree with that $C_{N, M, K}$ depends on $\phi$ and therefore on $(L, h)$. See the last line in the proof of Theorem \ref{Thm derivative estimates on the BK}.}
	%\added{Change $X$ to $U$ here to be consistent with Remark \ref{rmk BergmaN kernel vs local kernel}?}
\end{rmk}

\begin{rmk}\label{rmk asymptotic expansion application} A crucial point in Proposition \ref{Prop TYZ expansion} is that the $a_j$ are {\em universal} polynomials in the curvature and its covariant derivatives. Thus, if $M'\subset M$ also admits a complete \k metric and $B'_k$ denotes the corresponding Bergman kernel function of the polarized manifold $(M', g|_{M'}; L|_{M'},h|_{M'})$, then $B'_k$, which can be significantly different from $B_k$ for fixed $k$, satisfies the same asymptotic expansion as $k\to \infty$. This is an important ingredient in the proof of Theorem \ref{Thm localization}.
\end{rmk}

\begin{rmk}\label{rmk aj curvature}
Following the formulas in \cite{Wang05, Wa03}, we can compute explicit expressions for $a_0, a_1$ and $a_2$. In particular, we have $a_0=1$ (as already mentioned in Proposition \ref{Prop TYZ expansion}),  $a_1=-\frac{R}{2}$, and %where $R$ is the scalar curvature of $(M, g)$, while $a_2$ is more complicated, involving the Riemannian, Ricci, and scalar curvatures of $(M, g)$.
%and\added{double check this formula}
\begin{equation*}
	a_2=-\frac{1}{6}\Delta_g R+\frac{1}{24}\left(|\Rm|^2-4|\Ric|^2+3R^2\right),
\end{equation*}
where $\Rm, \Ric$ and $R$ are respectively the Riemannian, Ricci, and scalar curvatures of $(M, g)$, and $\Delta_g$ is the Laplacian with respect to $g$.
%We note that these expressions differ slightly from those in Lu \cite{Lu00}, since we are considering the line bundle $L^k \otimes \mathcal{C}_M$, whereas Lu considered $L^k$.
%We also note that our expansion \eqref{TYZ expansion intro} differs from those in \cite{Ze98} and \cite{Lu00} by a factor of $\pi^n$. This is because our K\"ahler form $\omega_g$ of $g$ (see Definition \ref{Defn polarized manifold}) differs from theirs by a factor of $\pi$. In this paper, we follow the conventions in \cite{BBSj08}.
We will not use these explicit formulas in our proofs.
\end{rmk}

The asymptotic expansions of the Bergman kernels $K_D$ and $B_k$, established in Corollary \ref{Cor Fefferman expansion} and Proposition \ref{Prop TYZ expansion}, are intimately related to each other. Indeed, by knowing one of them, we can recover the other.
We will demonstrate this in Theorem \ref{Thm the coeffiecients relation} below. In its statement, as in Corollary \ref{Cor Fefferman expansion}, we denote by $\rho$ the natural defining function of the circle bundle $S$ in $L^*: \rho(v)=1-|v|_{h^*}^2$ for $v\in L^*$. We denote by $\pi: L^*\rightarrow M$ the natural projection from $L^*$ to $M$.

\begin{thm}\label{Thm the coeffiecients relation}
Let $(M, g; L,h)$ be a polarized manifold, and let  $(L^*, h^*)$ be the dual line bundle of $(L, h)$. Assume that $M$ admits some complete metric.
Let $\Phi, \Psi$ be the coefficient functions appearing in the expansion \eqref{Fefferman expansion for the disk bundle} of the Bergman kernel form $K_D$.
Let $\{a_j\}_{j=0}^{\infty}$ be the coefficient functions appearing in the expansion \eqref{TYZ expansion intro} of the Bergman kernel function $B_k$.
%$\{a_l\}_{l=0}^{\infty}$ be as in \eqref{TYZ expansion intro} there.
 Then %the following statements $(a)$ and $(b)$ hold.
there exist two unique families of smooth functions $\{\alpha_j\}_{j=0}^{n+1}$ and $\{\beta_j\}_{j=0}^{\infty}$ on $M$ such that the following hold in $D$ near $S$,

\begin{align}
	\Phi&=\sum_{j=0}^{n+1} (\alpha_j \circ \pi) \rho^j +O(\rho^{n+2}) \label{relation between Phi and A}.\\
	\Psi&=\sum_{j=0}^N (\beta_j \circ \pi) \rho^j+O(\rho^{N+1}), ~\text{for every}~N \geq 0 \label{relation between Psi and B}.
\end{align}
Moreover, each $\alpha_j$, $0\leq j\leq n+1$, can be explicitly computed in terms of $a_0, \ldots, a_j$ by equation \eqref{relation between A and a}, and each $\beta_j$, $j\geq 0$, can be explicitly computed in terms of $a_{n+2}, \ldots, a_{n+2+j}$ by equation \eqref{relation between B and a}.
As a consequence, for any $U\subset M$ and any $j\geq 0$, the following are equivalent:
\begin{itemize}
\item [{\rm (i)}] $\beta_0=\ldots=\beta_j=0$ on $U$;
\item [{\rm (ii)}] $a_{n+2}=\ldots=a_{n+2+j}=0$ on $U$.
\end{itemize}	
\end{thm}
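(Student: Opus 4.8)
The plan is to derive an explicit integral relation between $K_D$ and $B_k$ on the disk bundle, and then match the asymptotic expansions term by term. First I would use the circle action on $L^*$ to decompose the Bergman space $A^2_{(n+1,0)}(D)$ into isotypic components: a holomorphic $(n+1,0)$-form $f$ on $D$, written in a trivializing chart $(U,z)$ with frame $e_{L^*}$ and fiber coordinate $\xi$ as $f = F(z,\xi)\, dz\wedge d\xi$, admits a Taylor expansion $F(z,\xi) = \sum_{k\geq 0} \xi^k F_k(z)$, where $\xi^k F_k(z)\,dz\wedge d\xi$ corresponds canonically to a holomorphic section of $L^{k+1}\otimes \mathcal{C}_M$ (the $d\xi$ absorbs one power of $L$, the $dz$ supplies $\mathcal{C}_M$). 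Computing the $L^2$ norm on $D$ by integrating first over the fiber disk $\{|\xi|_{h^*}<1\}$ and then over $M$, the fiber integral $\int_0^1 r^{2k+1} e^{-(k+1)\phi\cdot(\text{fiber factor})}\,\cdots$ produces, against the polarized volume form, a weight that turns the $D$-inner product of the $k$-th component into a fixed multiple of the $L^2(M, E_{k+1})$-inner product. This yields a formula of the shape $K_D(z,z) = \left(\sum_{k\geq 0} c_k\, |\xi|^{2k} B_{E_{k+1}}(z)\,(\text{fiber/volume factors})\right) T$, i.e. a generating-function identity expressing the coefficient function of $K_D/T$ as a Hadamard-type sum over $k$ of the $B_k$'s evaluated with the variable $|v|_{h^*}^2 = 1-\rho$.

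Next I would insert the Tian--Yau--Zelditch--Catlin expansion $B_k \sim (k/\pi)^n\sum_j a_j k^{-j}$ into this sum. The sum over $k$ of $(1-\rho)^k k^{n-j}$ (times the elementary constants coming from the fiber integration and from $(k/\pi)^n$) is, up to lower-order corrections, a derivative of a geometric series and hence contributes a term of the form $(\text{const})\cdot \rho^{-(n+1-j+\text{shift})}$ plus, once $j$ is large enough that the exponent of $k$ becomes a nonnegative integer, a $\log\rho$ term via the classical identity $\sum_{k\geq 1}(1-\rho)^k k^m$ having a $\log\rho$ singularity precisely when... — more precisely, one uses that $\sum_{k} (1-\rho)^k / k^{\ell}$ for $\ell\geq 1$ produces a $\rho^{\ell-1}\log\rho$ term. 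Organizing this carefully: the terms $a_0,\dots,a_{n+1}$ feed into the principal part $\Phi/\rho^{n+2}$, contributing $\alpha_j$ as an explicit linear combination of $a_0,\dots,a_j$ (this is equation \eqref{relation between A and a}); the terms $a_{n+2+i}$ for $i\geq 0$ are exactly the ones whose $k$-exponent hits the range producing logarithmic singularities, so $\beta_i$ comes out as an explicit linear combination of $a_{n+2},\dots,a_{n+2+i}$ with a nonzero leading coefficient on $a_{n+2+i}$ (this is equation \eqref{relation between B and a}). The nonvanishing of that leading coefficient is what makes the linear change of variables $(a_{n+2},\dots,a_{n+2+j}) \leftrightarrow (\beta_0,\dots,\beta_j)$ triangular and invertible, which immediately gives the equivalence of (i) and (ii) on any open set $U$.

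For full rigor I would need to justify passing from the formal generating-function manipulation to the genuine asymptotic expansion \eqref{relation between Phi and A}–\eqref{relation between Psi and B} in the sense of $C^\infty(\oo D)$ up to $O(\rho^{N+1})$; here the localization Theorem \ref{Thm localization} and Corollary \ref{Cor Fefferman expansion} do the essential work, since they already guarantee that $K_D = (\Phi\rho^{-n-2} + \Psi\log\rho)T$ with $\Phi,\Psi\in C^\infty(\oo D)$, so it suffices to identify the Taylor coefficients of $\Phi$ and $\Psi$ along $S$ by matching singular orders in the fiber variable against the TYZ expansion — a computation that can be localized to a single fiber over each point of $M$ and carried out with the uniform error bounds of Remark \ref{rmk asymptotic expansion meaning}. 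The uniqueness of $\{\alpha_j\}$ and $\{\beta_j\}$ is then automatic from uniqueness of Taylor coefficients, given the already-established uniqueness of $\Phi,\Psi$ modulo $O(\rho^\infty)$.

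The main obstacle I anticipate is controlling the interchange of the (infinite) sum over $k$ with the asymptotic expansion: each $B_k$ only has an \emph{asymptotic} expansion with a $k$-dependent remainder, and one is summing against weights $(1-\rho)^k$ that are not summable uniformly as $\rho\to 0$, so the tail of the series in $k$ is precisely what generates the boundary singularity and must be analyzed by a Laplace/Watson-type argument rather than dismissed. Concretely, one splits $\sum_k = \sum_{k\leq 1/\rho} + \sum_{k>1/\rho}$ (or uses a Mellin-transform/Euler–Maclaurin representation of $\sum_k (1-\rho)^k k^{n-j}$), showing the first range reproduces the claimed $\Phi,\Psi$ expansion to order $N$ with the TYZ remainder contributing only $O(\rho^{N+1})$ after summation, while the second range is $O(\rho^\infty)$; keeping the $C^m$ bounds in $z$ uniform throughout, via Remark \ref{rmk asymptotic expansion meaning}, is the technically delicate part. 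Everything else — the fiber integration, the bookkeeping identifying which $a_j$ lands in $\alpha$ versus $\beta$, and the triangularity giving (i)$\Leftrightarrow$(ii) — is elementary once that analytic step is in place.
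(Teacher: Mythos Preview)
Your plan coincides with the paper's: the generating-function identity is exactly Theorem~\ref{Thm relation between two BKs}, the split of $\sum_j$ at $j=n+1$ into the $\Phi$-part (via $\sum_k \binom{k+i}{k}(1-\rho)^k=\rho^{-i-1}$) and the $\Psi$-part (via $\sum_k Q_{jk}(1-\rho)^k=\rho^j\log\rho$), and the triangular change $(a_{n+2},\dots,a_{n+2+j})\leftrightarrow(\beta_0,\dots,\beta_j)$ with nonzero diagonal are all as you outline. The one place the paper is simpler than your sketch is precisely your ``main obstacle'': instead of a Watson/dyadic split $\sum_{k\le 1/\rho}+\sum_{k>1/\rho}$ or a Mellin representation, the paper observes directly that if a remainder $H(k,z)$ satisfies $\|H(k,\cdot)\|_{C^m(X)}\le C k^{-p}$ uniformly in $k$ (their class $O_M(p)$), then $\sum_k (1-\rho)^k H(k,\cdot)\in C^{p-2}(\oo D)$ by termwise differentiation and the trivial bound $0\le 1-\rho\le 1$ (Proposition~\ref{Prop sum of errors}); thus the summed TYZ remainder is a priori $C^{N-2}(\oo D)$. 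Matching against the already-established Fefferman form $\Phi\rho^{-n-2}+\Psi\log\rho$ and invoking a one-variable vanishing lemma (Lemma~\ref{Lem vanishing lemma}: if $\phi+\psi\,t^{n+2}\log t+t^{n+2}\varphi=0$ on $(0,\varepsilon)$ with $\phi,\psi\in C^\infty$ and $\varphi\in C^m$, then $\phi=O(t^{n+2})$ and $\psi=O(t^{m+1})$) along curves transversal to $S$ then gives \eqref{relation between Phi and A}--\eqref{relation between Psi and B} without any Laplace-type tail analysis. Your approach would also work, but the paper's is shorter.
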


\begin{rmk}
	Equation \eqref{relation between Phi and A} means that there exists some $(n+1, n+1)$-form $\Phi_{n+2}\in C^{\infty}_{(n+1, n+1)}(\oo{D})$ such that
	\begin{equation*}
		\Phi=\sum_{j=0}^{n+1} (\alpha_j \circ \pi) \rho^j T+\rho^{n+2}\Phi_{n+2} \quad \text{ on } D.
	\end{equation*}
	Similarly, equation \eqref{relation between Psi and B} means that, for each $N\geq 0$, there exists some $(n+1, n+1)$-form $\Psi_{N+1}\in C^{\infty}_{(n+1, n+1)}(\oo{D})$ such that
	\begin{equation*}
		\Psi=\sum_{j=0}^{N} (\beta_j \circ \pi) \rho^jT+\rho^{N+1}\Psi_{N+1} \quad \text{ on } D.
	\end{equation*}
\end{rmk}

\bigskip

Theorem \ref{Thm the coeffiecients relation} allows us to characterize Bergman logarithmic flatness of  the circle bundle $S$ in terms of the \k geometry of $(M, g)$.
\begin{thm}\label{Thm BLF criterion}
%Let $(M, g; L,h)$, $(L^*, h^*)$ and $S$ be as in Theorem \ref{Thm the coeffiecients relation}. Again assume $M$ admits some complete metric and
Let $(M, g; L,h)$ be a polarized manifold, and let $S$ be the circle bundle of the dual bundle $(L^*, h^*)$. Assume that $M$ admits some complete \k metric. Let $\{a_j\}_{j=0}^{\infty}$ be
%as in Theorem \ref{Thm the coeffiecients relation}.
the coefficient functions appearing in the expansion \eqref{TYZ expansion intro} of the Bergman kernel function $B_k$. Let $U$ be an open subset of $M$ and let $\Sigma$ be defined by $\Sigma=\pi^{-1}(U)\cap S$. Then the following are equivalent:
\begin{itemize}	
\item[{\rm (i)}] $a_{n+2+m}= 0$ on $U$ for all $m\geq 0$;
\item[{\rm (ii)}]  $\Sigma$ is Bergman logarithmically flat.
\end{itemize}
\end{thm}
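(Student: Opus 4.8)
The plan is to deduce Theorem~\ref{Thm BLF criterion} by combining Corollary~\ref{Cor Fefferman expansion} with Theorem~\ref{Thm the coeffiecients relation}, so that the statement becomes essentially a bookkeeping argument once those two results are in hand. Recall from Corollary~\ref{Cor Fefferman expansion} that $\Psi = O(\rho^\infty)$ along an open piece $\Sigma \subset S$ if and only if $\Sigma$ is Bergman logarithmically flat. Hence, with $\Sigma = \pi^{-1}(U)\cap S$, statement (ii) is equivalent to $\Psi$ vanishing to infinite order along $\pi^{-1}(U)\cap S$. The task is then to show that this vanishing is equivalent to statement (i), namely $a_{n+2+m}=0$ on $U$ for all $m\geq 0$.

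First I would translate the condition $\Psi = O(\rho^\infty)$ along $\pi^{-1}(U)\cap S$ into a statement about the functions $\beta_j$. By the expansion \eqref{relation between Psi and B} in Theorem~\ref{Thm the coeffiecients relation}, for every $N\geq 0$ we have $\Psi = \sum_{j=0}^N (\beta_j\circ\pi)\rho^j + O(\rho^{N+1})$ near $S$. Since $\rho$ is the defining function of $S$ and the $\beta_j$ are pulled back from $M$ via $\pi$, a standard Taylor-coefficient argument shows that $\Psi$ vanishes to infinite order along $\pi^{-1}(U)\cap S$ if and only if $\beta_j\equiv 0$ on $U$ for every $j\geq 0$. (Here I would spell out that vanishing to order $N+1$ along the hypersurface forces the coefficient $\beta_N\circ\pi$ to vanish on $\pi^{-1}(U)\cap S$, hence $\beta_N$ to vanish on $U$, and one induces on $N$; conversely, if all $\beta_j$ vanish on $U$ then each truncation gives $\Psi = O(\rho^{N+1})$ near $\pi^{-1}(U)\cap S$, so $\Psi = O(\rho^\infty)$ there.)

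Next, I would invoke the equivalence of (i) and (ii) in Theorem~\ref{Thm the coeffiecients relation}: for any $U\subset M$ and any $j\geq 0$, the vanishing $\beta_0 = \cdots = \beta_j = 0$ on $U$ is equivalent to $a_{n+2} = \cdots = a_{n+2+j} = 0$ on $U$. Letting $j\to\infty$, the condition ``$\beta_j\equiv 0$ on $U$ for all $j\geq 0$'' is equivalent to ``$a_{n+2+m}\equiv 0$ on $U$ for all $m\geq 0$,'' which is precisely statement (i). Chaining the three equivalences—(ii) $\Leftrightarrow$ $\Psi = O(\rho^\infty)$ along $\pi^{-1}(U)\cap S$ $\Leftrightarrow$ all $\beta_j$ vanish on $U$ $\Leftrightarrow$ all $a_{n+2+m}$ vanish on $U$ $=$ (i)—completes the proof.

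The argument is short; the only genuinely non-trivial input beyond quoting the earlier results is the elementary but necessary lemma that a smooth function of the form $\sum_j (\beta_j\circ\pi)\rho^j + O(\rho^{N+1})$ vanishes to infinite order along an open piece of $\{\rho = 0\}$ precisely when all the pulled-back coefficients $\beta_j\circ\pi$ vanish there. The one point requiring a little care is that $\Sigma = \pi^{-1}(U)\cap S$ must be related to $U$ correctly: since $\pi$ restricted to $S$ is a submersion onto $M$, vanishing of $\beta_j\circ\pi$ on $\pi^{-1}(U)\cap S$ is equivalent to vanishing of $\beta_j$ on $U$, so no information is lost in passing between the base and the circle bundle. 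I expect this fiberwise-to-base translation, together with correctly handling the ``for all $N$'' quantifier in \eqref{relation between Psi and B}, to be the main (though still routine) obstacle; everything else is a direct citation of Corollary~\ref{Cor Fefferman expansion} and Theorem~\ref{Thm the coeffiecients relation}.
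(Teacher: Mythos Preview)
Your proposal is correct and follows essentially the same approach as the paper: the paper's proof is equally brief, citing Corollary~\ref{Cor Fefferman expansion} to equate (ii) with $\Psi$ vanishing to infinite order along $\Sigma$, reading this as $\beta_j=0$ on $U$ for all $j\geq 0$ via \eqref{relation between Psi and B}, and then invoking the last assertion of Theorem~\ref{Thm the coeffiecients relation} to obtain (i). Your write-up is in fact more detailed than the paper's on the Taylor-coefficient and base-to-fiber translation steps, but the logical structure is identical.
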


Since all the coefficients $a_j$ are polynomials in the curvature of $(M, g)$ and its covariant derivatives, Theorem \ref{Thm BLF criterion} allows us to characterize Bergman logarithmically flatness of $\Sigma$ in terms of the \k geometry of $(M, g).$ We remark that Lu and Tian, in their seminal work \cite{LT04}, initiated the study of characterizing the logarithmic singularity in terms of the coefficients of the Tian--Yau--Zelditch--Catlin expansion. Moreover, when $M$ is compact and $\Sigma=S$, the implication ``(ii) $\implies$ (i)'' in Theorem \ref{Thm BLF criterion} was already proved by Lu and Tian  (see \cite[Theorem 2.4]{LT04}). They posed the converse implication as an open question (\cite{LT04}, and \cite{Lu-PC}).  Inspired by their work, we prove the following more decisive result in the compact case.

\begin{cor}\label{Cor BLF criterion compact}
%Let $(M, g; L,h)$ be a polarized compact manifold. Let $\{a_l\}_{l=0}^{\infty}$ and $S$ be as in Theorem \ref{Thm the coeffiecients relation}.
Let $(M, g; L,h)$ be a polarized compact manifold, and let $S$ be the circle bundle of the dual bundle $(L^*, h^*)$. Let $\{a_j\}_{j=0}^{\infty}$ be
%as in Theorem \ref{Thm the coeffiecients relation}.
the coefficient functions appearing in the expansion \eqref{TYZ expansion intro} of the Bergman kernel function $B_k$. Then, the following are equivalent:
\begin{itemize}	
\item[{\rm (i)}] For each $m\geq 0$, $a_{n+2+m}$ is a constant function on $M$;
\item[{\rm (ii)}] $S$ is Bergman logarithmically flat;
\item[{\rm (iii)}] For each $m \geq 0$, $a_{n+2+m}$ is either non-negative on $M$ or non-positive on $M$.
\end{itemize}

\end{cor}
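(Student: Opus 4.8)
\textbf{Proof proposal for Corollary \ref{Cor BLF criterion compact}.}

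The plan is to deduce Corollary \ref{Cor BLF criterion compact} from Theorem \ref{Thm BLF criterion} together with two classical facts about the Tian--Yau--Zelditch--Catlin coefficients on a compact polarized manifold. First, since $M$ is compact it trivially admits a complete \k metric, so Theorem \ref{Thm BLF criterion} applies with $U = M$ and $\Sigma = S$; this gives the equivalence of (ii) with the statement ``$a_{n+2+m} = 0$ on $M$ for all $m \geq 0$''. Thus the entire content of the corollary reduces to showing that, on a \emph{compact} polarized manifold, the following three conditions on the family $\{a_{n+2+m}\}_{m\geq 0}$ are equivalent: (a) each $a_{n+2+m}$ vanishes identically; (b) each $a_{n+2+m}$ is constant; (c) each $a_{n+2+m}$ has a sign. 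Clearly (a) $\Rightarrow$ (b) $\Rightarrow$ (c) (a constant function trivially has a sign), so the only real work is (c) $\Rightarrow$ (a).

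The key ingredient is the integral identity $\int_M a_j \, dV_g \cdot (k/\pi)^n$-type control coming from the fact that $\int_M B_k \, dV_g = d(k) = \dim H^0(M, L^k \otimes \mathcal{C}_M)$, which for $k$ large is a polynomial in $k$ of degree $n$ by Riemann--Roch (Hirzebruch--Riemann--Roch), since $L$ is positive. Integrating the asymptotic expansion \eqref{TYZ expansion intro} term by term against $dV_g$ (justified on the compact manifold by Remark \ref{rmk asymptotic expansion meaning} with $m=0$, $X=M$) and matching with the polynomial $d(k)$, one sees that $\int_M a_j \, dV_g = 0$ for every $j \geq n+1$. In particular $\int_M a_{n+2+m} \, dV_g = 0$ for all $m \geq 0$. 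Now if $a_{n+2+m}$ has a fixed sign on $M$, then a continuous function of fixed sign with zero integral over a compact manifold must vanish identically; this gives (c) $\Rightarrow$ (a) and closes the cycle of implications.

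I would organize the write-up as follows. Step 1: record that $M$ compact implies $A^2(M,E_k)$ is finite-dimensional with $\dim = \chi(M, L^k\otimes\mathcal{C}_M)$ for $k \gg 0$ by Kodaira vanishing plus Riemann--Roch, hence $d(k)$ agrees with a degree-$n$ polynomial $P(k)$ in $k$ for large $k$. Step 2: integrate \eqref{TYZ expansion intro} over $M$; using the uniform $C^0$ estimate of Remark \ref{rmk asymptotic expansion meaning}, obtain $d(k) = (k/\pi)^n \sum_{j=0}^{N} k^{-j} \int_M a_j\, dV_g + O(k^{n-N-1})$ for every $N$, and compare coefficients of $k^{n-j}$ with those of $P(k)$ to conclude $\int_M a_j\, dV_g = 0$ for all $j > n$. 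Step 3: invoke Theorem \ref{Thm BLF criterion} for the equivalence (i$_{\text{Thm}}$) $\Leftrightarrow$ (ii). Step 4: prove the cycle (i) $\Rightarrow$ (ii) $\Rightarrow$ (iii) $\Rightarrow$ (i): the first is immediate from Theorem \ref{Thm BLF criterion} and the trivial implication (a)$\Rightarrow$(b)$\Rightarrow$(c) above inserting a vanishing $a_j$; the last uses Step 2 and the sign argument.

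The main obstacle, such as it is, is purely bookkeeping: one must make sure the term-by-term integration of the asymptotic expansion is legitimately uniform — this is exactly what Remark \ref{rmk asymptotic expansion meaning} provides with $X = M$ compact — and one must be slightly careful that the equality $d(k) = P(k)$ only holds for $k$ large, which is harmless since asymptotics as $k\to\infty$ are all that enter. There is no deep geometric difficulty here; all the real work has already been done in Theorem \ref{Thm BLF criterion}, and the corollary is the natural compact-case packaging of it, sharpening the Lu--Tian result by adding the sign characterization (iii).
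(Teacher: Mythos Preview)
Your proposal is correct and matches the paper's proof: reduce via Theorem \ref{Thm BLF criterion} to the statement that each $a_{n+2+m}$ vanishes, then use Kodaira vanishing plus Hirzebruch--Riemann--Roch to see that $\int_M B_k\,dV_g$ is a polynomial in $k$ of degree $n$, whence $\int_M a_j\,dV_g = 0$ for all $j>n$, so that constant (or sign-definite) forces identically zero. The only slip is in your Step 4 labeling: the implication (i)$\Rightarrow$(ii) is not ``immediate from the trivial implication (a)$\Rightarrow$(b)$\Rightarrow$(c)'' but also requires the integral identity of Step 2 (a constant function with zero integral vanishes)---your opening paragraph already has this right, so just correct the write-up plan accordingly.
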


The problem of finding non-spherical Bergman logarithmically flat hypersurfaces, of dimension $5$ and higher, has attracted the attention of many researchers and is closely related to the Ramadanov conjecture (see \cite{EX24} and references therein). Pioneering work was carried out by Engli\v{s}--Zhang in their paper \cite{EnZh}, which has influenced much subsequent research along this line; see also \cite{LMZ17, X23, EX24} for more recent developments. We conclude this introduction with an application of Corollary \ref{Cor BLF criterion compact}, which directly generalizes the work in \cite{EnZh}.

By Corollary~\ref{Cor BLF criterion compact}, constructing a Bergman logarithmically flat circle bundle $S$ over a compact manifold $M$ amounts to finding a polarized compact manifold $(M, g; L, h)$ such that the associated functions $a_{n+2+m}$ are constant on $M$, for all $m \geq 0$. We note that, e.g., local homogeneity of $(M, g)$ is sufficient to guarantee this. Recall that a Kähler manifold $(M, g)$ is locally homogeneous if, for every pair of points $p, q \in M$, there exist neighborhoods $V_p$ and $V_q$ of $p$ and $q$, respectively, and a biholomorphism $f: V_p \to V_q$ such that $f$ preserves the metric: $f^*g = g$ on $V_p$.

\begin{cor}\label{Cor cpt homogeneous is BLF}
	Let $(M, g; L, h)$ be a polarized manifold. Assume that $(M, g)$ is compact and locally homogeneous. Then, the circle bundle $S$ of $(L^*, h^*)$ is Bergman logarithmically flat. Furthermore, $S$ is spherical %(i.e., locally CR diffeomorphic to a piece of the unit sphere at every point)
	 if and only if $(M,g)$ is locally holomorphically isometric to one of the following complex space forms:
	\begin{itemize}
		\item [(1)] $(\mathbb{B}^n, \lambda \,\omega_{-1})$ for some $\lambda\in \mathbb{R}^+$,
		\item [(2)] $(\mathbb{CP}^n, \lambda\, \omega_1)$ for some $\lambda\in \mathbb{R}^+$,
		\item [(3)] $(\mathbb{C}^n, \omega_0)$,
		\item [(4)] $(\mathbb{B}^l\times \mathbb{CP}^{n-l}, \lambda \omega_{-1}\times \lambda\omega_1)$ for some $1\leq l\leq n-1$ and some $\lambda\in\mathbb{R}^+$.
	\end{itemize}
	Here $\omega_c$ in the statement of the corollary denotes the \k metric with constant holomorphic sectional curvature $c$.
\end{cor}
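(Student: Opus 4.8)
The plan is to combine Corollary~\ref{Cor BLF criterion compact} with the classification of locally homogeneous Kähler manifolds whose polarized circle bundle is spherical. The first half --- that $S$ is always Bergman logarithmically flat when $(M,g)$ is compact and locally homogeneous --- should follow almost immediately from Corollary~\ref{Cor BLF criterion compact}: each $a_j$ is a universal polynomial in the curvature of $(M,g)$ and its covariant derivatives (Proposition~\ref{Prop TYZ expansion}), hence invariant under local isometries; if $(M,g)$ is locally homogeneous, any such function is constant on $M$. In particular $a_{n+2+m}$ is constant for every $m\ge 0$, so condition (i) of Corollary~\ref{Cor BLF criterion compact} holds and (ii) gives Bergman logarithmic flatness.

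For the second half I would argue as follows. By the Burns--Graham--Boutet de Monvel theorem cited in the introduction, in complex dimension two sphericity is automatic from Bergman logarithmic flatness, so the genuinely new content is in higher dimensions, but the statement as phrased should be proved uniformly. First I would recall that for a compact locally homogeneous Kähler manifold, the universal cover splits (by the de Rham--Berger type decomposition for homogeneous Kähler manifolds, or more concretely by the structure theory of homogeneous Kähler manifolds together with compactness) as a product of a flat factor, a compact Hermitian symmetric factor, and --- absent in the compact case after passing to the polarization --- no noncompact irreducible factor; in fact in our situation $(M,g)$ locally holomorphically isometrically splits as a product of complex space forms and irreducible Hermitian symmetric spaces of rank $\ge 2$. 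The key geometric input is then: the polarized circle bundle $S$ over a product $M_1\times M_2$ is spherical if and only if it is spherical over each factor \emph{and} the metrics are ``compatible'' in the precise sense that forces at most one non-flat factor unless both are space forms with opposite constant holomorphic sectional curvature; this is exactly the rigidity underlying the appearance of case (4) (a product $\mathbb B^l\times\mathbb{CP}^{n-l}$) and the exclusion of, e.g., $\mathbb{CP}^1\times\mathbb{CP}^1$ with unequal scalings. I would establish this by computing, via the explicit formulas in Remark~\ref{rmk aj curvature} and their higher analogues, that sphericity of $S$ forces all $a_{n+2+m}$ to vanish identically (not merely be constant) --- using Lu--Tian's original computation \cite{LT04} that for a spherical $S$ over a space form the relevant coefficients vanish --- and then checking that for an irreducible Hermitian symmetric space of rank $\ge 2$, or for an ``incompatible'' product, some $a_{n+2+m}$ is a nonzero constant, contradicting vanishing.

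The main obstacle, and the step requiring the most care, is the converse direction: showing that \emph{if} $S$ is spherical then $(M,g)$ must be one of the four listed models. The forward implications (that each listed model has spherical $S$) are classical --- $\mathbb B^n$, $\mathbb{C}^n$, $\mathbb{CP}^n$ give the standard sphere (up to CR equivalence) as circle bundle, and the product case $\mathbb B^l\times\mathbb{CP}^{n-l}$ with matched scaling $\lambda$ is the Engli\v s--Zhang / generalized Cayley-type example whose sphericity one verifies directly. For the converse, the plan is: (a) reduce to the universal cover and use the classification of simply connected homogeneous Kähler manifolds to write $(M,g)$ locally as a product $\prod_i (M_i, g_i)$ with each factor either flat, a space form of nonzero curvature (rescaled $\mathbb B^{n_i}$ or $\mathbb{CP}^{n_i}$), or an irreducible HSS of rank $\ge 2$; (b) observe $S$ over $\prod M_i$ is the total space of the circle bundle associated to $\bigotimes L_i^*$, and sphericity of $S$ is a CR condition that I will show is inherited by each factor circle bundle $S_i$, so each $M_i$ must individually have spherical $S_i$ --- this already eliminates irreducible HSS of rank $\ge 2$ (whose polarized circle bundle is known to be non-spherical, e.g.\ by Engli\v s--Zhang or by a direct computation that $\psi\not\equiv 0$) and forces each $M_i$ to be a space form; (c) finally, analyze how two or more space-form factors can coexist: a short computation with the defining function $\rho=1-\prod|\,\cdot\,|_{h_i^*}^2$ shows the circle bundle over $(\mathbb{CP}^a,\lambda_1\omega_1)\times(\mathbb{CP}^b,\lambda_2\omega_1)$ or over two positively-curved factors is never spherical, the product of two balls is spherical only when it reduces to a single ball (the relevant local CR structure must be flat), leaving the flat case (3) for all-flat, a single space form for one nonzero factor (cases (1),(2)), and precisely one $\mathbb B^l$-and-$\mathbb{CP}^{n-l}$ pair with equal scaling for the genuinely mixed case (4). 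The delicate point throughout is step (b)--(c): making rigorous that sphericity descends to and constrains the factors, for which I would use either the product structure of the Bergman kernel form $K_D$ over $D_1\times D_2$ (so that $\Psi$, hence each $a_{n+2+m}$, decomposes accordingly) combined with Theorem~\ref{Thm the coeffiecients relation}, or directly the known sphericity/non-sphericity of the model circle bundles together with the fact that a spherical CR structure cannot be an open piece of a non-spherical product unless the factors are arranged as above.
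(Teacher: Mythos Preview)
Your argument for the first assertion (Bergman logarithmic flatness of $S$) is correct and matches the paper's proof exactly.

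For the sphericity characterization, however, your approach contains a fundamental gap. You propose to distinguish spherical from non-spherical circle bundles among compact locally homogeneous $(M,g)$ by showing that sphericity forces all $a_{n+2+m}$ to vanish, and then ``checking that for an irreducible Hermitian symmetric space of rank $\ge 2$, or for an `incompatible' product, some $a_{n+2+m}$ is a nonzero constant.'' But this cannot work: by the first part of the corollary, which you have just proved, \emph{every} compact locally homogeneous $(M,g)$ has $S$ Bergman logarithmically flat, and hence (by the proof of Corollary~\ref{Cor BLF criterion compact}, where constancy together with vanishing of $\int_M a_{n+2+m}\,dV_g$ forces $a_{n+2+m}\equiv 0$) has $a_{n+2+m}\equiv 0$ for \emph{all} $m\ge 0$. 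The coefficients $a_{n+2+m}$ detect Bergman logarithmic flatness, not sphericity; they vanish identically for, say, $\mathbb{CP}^1\times\mathbb{CP}^1$ just as for $\mathbb{CP}^2$, even though only the latter yields a spherical circle bundle. The remaining steps (b)--(c) of your program, where you instead appeal to CR-geometric inheritance (``sphericity of $S$ over a product descends to the factor bundles $S_i$''), are left as unjustified assertions --- and nontrivial ones, since the circle bundle over $M_1\times M_2$ is not $S_1\times S_2$ but a diagonal $S^1$-quotient thereof.

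The paper's route is completely different and bypasses all of this: it simply cites the results of Webster, Bryant, and Wang. The key link is Webster's theorem that the circle bundle $S$ is spherical if and only if the Bochner tensor of $(M,g)$ vanishes; Bryant's classification of Bochner--K\"ahler metrics (together with Wang's work) then yields precisely the list (1)--(4) in the locally homogeneous case. No analysis via the $a_j$ is needed, nor could it succeed for the reason above.
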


Corollary~\ref{Cor cpt homogeneous is BLF} allows for the construction of many examples of non-spherical Bergman logarithmically flat circle bundles. We remark, as mentioned above, that when $(M, g)$ is a Hermitian symmetric space of compact type, Engli\v{s}--Zhang \cite{EnZh} had already shown that $S$ is Bergman logarithmically flat. They further proved that $S$ is spherical if and only if $M$ is the projective space $\mathbb{CP}^n$, thereby providing the first examples of non-spherical Bergman logarithmically flat hypersurfaces. See also the subsequent work of Loi--Mossa--Zuddas \cite{LMZ17}, which treated the case where $(M, g)$ is a compact, simply-connected homogeneous Kähler--Einstein manifold of classical type.

It is natural to ask whether the compactness condition in Corollary~\ref{Cor BLF criterion compact} can be removed while still concluding the Bergman logarithmic flatness of $S$. To this end, we propose the following conjecture and will study it in the non-compact manifold case in a separate article.
\begin{conj}
	Let $(M, g; L, h)$ be a polarized manifold. If $(M, g)$ is locally homogeneous, then the circle bundle $S$ of $(L^*, h^*)$ is Bergman logarithmically flat.
\end{conj}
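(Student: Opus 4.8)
The plan is to first establish Bergman logarithmic flatness of $S$ by reducing the constancy of $a_{n+2+m}$ to local homogeneity, then to handle the sphericity classification separately using known rigidity results. For the first part, I would argue as follows. By Proposition~\ref{Prop TYZ expansion}, each $a_j$ is a universal polynomial in the curvature tensor of $(M,g)$ and its covariant derivatives. If $(M,g)$ is locally homogeneous, then for any $p,q\in M$ there is a local biholomorphic isometry $f:V_p\to V_q$, and since $f^*g=g$, the pullback $f^*$ sends the curvature tensor and all its covariant derivatives at $q$ to those at $p$. Any scalar quantity built universally (i.e., by full metric contractions) from curvature and its covariant derivatives is therefore invariant: $a_j(p)=a_j(q)$. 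Hence every $a_j$, in particular $a_{n+2+m}$ for all $m\geq 0$, is a constant function on $M$. By the implication ``(i)$\implies$(ii)'' of Corollary~\ref{Cor BLF criterion compact}, $S$ is Bergman logarithmically flat. (Strictly, one should remark that a locally homogeneous compact \k manifold admits a \k metric, namely $g$ itself, so the complete-metric hypothesis is automatic; compactness also makes the application of Corollary~\ref{Cor BLF criterion compact} legitimate.)

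For the sphericity statement, I would proceed in two directions. For the ``if'' direction, one checks directly that in each of the four listed cases the circle bundle $S$ is spherical. For cases (1)--(3), $(L^*,h^*)$ is (a power of) the tautological bundle over a complex space form, and $S$ is locally CR-diffeomorphic to the standard sphere $S^{2n+1}\subset\bC^{n+1}$: this is the classical picture of the Hopf-type fibration, where for $\bB^n$ with the Bergman-type metric the disk bundle $D$ is biholomorphic to a ball and for $\bCP^n$ with the Fubini--Study metric $(L^*,h^*)$ is $\mathcal{O}(-1)$ and $S$ is the sphere. Case (4) requires the observation that the product of space forms with the indicated curvature signs produces, after passing to the appropriate tensor power of line bundles, a circle bundle that is again locally spherical — here I would invoke (or reprove) the relevant computation, e.g.\ that the associated hypersurface is CR-equivalent to a sphere because the total space carries a flat CR structure of the requisite type; this is essentially the content of the segre-family/space-form analysis in \cite{EnZh} and the references therein.

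For the ``only if'' direction, suppose $S$ is spherical. Then it is in particular Bergman logarithmically flat, so by Corollary~\ref{Cor BLF criterion compact} (or already by \cite[Theorem~2.4]{LT04}) each $a_{n+2+m}$ is constant. The key extra input from sphericity is stronger: a spherical strongly pseudoconvex hypersurface has vanishing CR curvature (Chern--Moser), which forces the Fefferman expansion of $K_D$ to have an extremely rigid form, and via Theorem~\ref{Thm the coeffiecients relation} this constrains not merely the $a_{n+2+m}$ but effectively pins down the lower coefficients $a_1,a_2,\dots$ up to the combinatorial structure of a space form. Concretely, I would use the explicit formulas in Remark~\ref{rmk aj curvature} ($a_1=-R/2$, $a_2=-\tfrac16\Delta_gR+\tfrac1{24}(|\Rm|^2-4|\Ric|^2+3R^2)$) together with the further known coefficients, combined with local homogeneity (which makes $R$, $|\Rm|^2$, etc.\ constant and all covariant derivatives of curvature expressible algebraically), to deduce that $(M,g)$ must have constant holomorphic sectional curvature on each de~Rham--type factor. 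A locally homogeneous \k manifold all of whose ``irreducible pieces'' are of constant holomorphic sectional curvature is locally holomorphically isometric to a product of the model space forms $(\bB^l,\lambda\omega_{-1})$, $(\bC^m,\omega_0)$, $(\bCP^q,\lambda\omega_1)$; compactness of $M$ then rules out noncompact factors unless they are balanced against a compact one, and a further CR-flatness argument on the circle bundle of a product eliminates all cases except those listed. I expect this last step — showing that among products of space-form factors, sphericity of $S$ selects exactly the four families, and in particular rules out products with more than two factors or with mismatched curvature constants — to be the main obstacle, as it requires a careful analysis of the CR structure of circle bundles over Kähler products rather than a purely formal invariant-theoretic argument.
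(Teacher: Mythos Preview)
The statement you are asked to address is the \emph{Conjecture}, not Corollary~\ref{Cor cpt homogeneous is BLF}. The paper does not prove the Conjecture; it is explicitly posed as an open problem, with the remark that the non-compact case will be studied in a separate article. There is therefore no proof in the paper to compare against.

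Your proposal is in fact a proof of Corollary~\ref{Cor cpt homogeneous is BLF} (the compact case), and as such its first part matches the paper's argument verbatim: local homogeneity makes each $a_j$ constant, and then Corollary~\ref{Cor BLF criterion compact} applies. But viewed as an attempt at the Conjecture, the argument has a genuine gap at exactly this step. Corollary~\ref{Cor BLF criterion compact} requires $M$ compact: its proof uses Kodaira vanishing and Riemann--Roch--Hirzebruch to show that $\int_M B_k\, dV_g$ is a polynomial of degree $n$ in $k$, whence $\int_M a_{n+2+m}\, dV_g = 0$, and then constancy forces $a_{n+2+m}\equiv 0$. On a non-compact $M$ none of this is available---the volume may be infinite, there is no global Riemann--Roch, and constancy of $a_{n+2+m}$ no longer implies vanishing. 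Theorem~\ref{Thm BLF criterion} does apply without compactness, but it requires $a_{n+2+m}=0$, not merely constant, and you have supplied no mechanism to bridge that gap. This is precisely why the paper states the non-compact case as a conjecture rather than a corollary.

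As an aside on your treatment of the sphericity clause in Corollary~\ref{Cor cpt homogeneous is BLF}: the paper dispatches it in one line by citing Bryant~\cite{Bry01}, Webster~\cite{Web78}, and Wang~\cite{Wang}, which classify the K\"ahler metrics whose associated circle bundles are spherical. Your proposed route through the explicit $a_j$ coefficients and a de~Rham-type factor analysis is, as you yourself acknowledge, incomplete at the final step and would be substantially harder than invoking the existing classification.
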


%The paper is organized as follows. Since the proof of Theorem \ref{Thm localization} depends on Proposition \ref{Prop TYZ expansion}, we first prove Proposition \ref{Prop TYZ expansion} in $\S$ \ref{Sec 2.1}, and then establish Theorem \ref{Thm localization} and Corollary \ref{Cor Fefferman expansion} in $\S$ \ref{Sec 2.2}. After that, $\S$ \ref{Sec 3} is devoted to the proof of Theorem \ref{Thm the coeffiecients relation}. Finally, we establish Theorem \ref{Thm BLF criterion}, Corollary \ref{Cor BLF criterion compact} and Corollary \ref{Cor cpt homogeneous is BLF} in $\S$ \ref{Sec 4}.

The paper is organized as follows. In Section \ref{Sec 2}, we first prove Theorem \ref{Thm localization} and Corollary \ref{Cor Fefferman expansion}. Section \ref{Sec 3} is devoted to the proof of Theorem \ref{Thm the coeffiecients relation}. Then in Section 4, we establish Theorem \ref{Thm BLF criterion}, Corollary \ref{Cor BLF criterion compact}, and Corollary \ref{Cor cpt homogeneous is BLF}. Finally, in Appendix \ref{Sec Appendix}, we prove a more general version of Proposition \ref{Prop TYZ expansion}

%The paper is organized as follows. Since the proof of Theorem \ref{Thm localization} depends on Proposition \ref{Prop TYZ expansion}, we first prove Proposition \ref{Prop TYZ expansion} in $\S$\ref{Sec 2.1}, and then establish Theorem \ref{Thm localization} and Corollary \ref{Cor Fefferman expansion} in $\S$\ref{Sec 2.2}. Section \ref{Sec 3} is devoted to the proof of Theorem \ref{Thm the coeffiecients relation}. Finally, in Section 4 we establish Theorem \ref{Thm BLF criterion}, Corollary \ref{Cor BLF criterion compact}, and Corollary \ref{Cor cpt homogeneous is BLF}.

\section{Proof of Theorem \ref{Thm localization} and Corollary \ref{Cor Fefferman expansion}}\label{Sec 2}
In this section, we first verify the the fact that the canonical top form $T$ of $(L^*, h^*)$ in Definition \ref{Defn canonical form}, i.e., the $(n+1,n+1)$-form given by \eqref{canonical top form}, is well-defined. After that, we will prove
Theorem \ref{Thm localization} and Corollary \ref{Cor Fefferman expansion}.
%$\clubsuit$
%\begin{defn}
%Let $(M, g; L, h)$ be a polarized manifold. Under a coordinate chart $(U, z)$ together with a local frame $e_{L^*}$, we define
%\begin{equation*}
	%T_M=e^{\phi} \det g ~ i^{(n+1)^2}dx\wedge d\xi \wedge d\oo{x}\wedge d\oo{\xi}
	%\end{equation*}
	%and verify that $T_M$ is well-defined as a global $(n+1, n+1)$-form on $L$. We will call it the canonical top form of the polarized manifold $(M, g; L, h)$, or simply $M$.
%\end{defn}

%\added{Maybe we move this definition to Page 5 of the introduction, and only keep the proof of ``verify that $T_M$ is well-defined...'' here.}
%\added{This definition is moved to \eqref{canonical top form}.}
%$\clubsuit$

Let $(M, g; L, h)$ be a polarized manifold, and $(L^*, h^*)$ be the dual line bundle of $(L,h).$  We
shall verify that the definition of the canonical top form $T$, given by \eqref{canonical top form},
%(see Definition \ref{Defn canonical form})
is independent of the choices of local coordinate chart $(U, z)$ and the frame $e_{L^*}$.
%As a result, $T$ gives a well-defined global smooth $(n+1, n+1)$-form on $L^*$.  We will call $T$ the canonical top form of $(L^*, h^*).$
For this purpose, we pick any trivializing coordinate chart $(U, z)$ of $(M, g; L, h)$ together with a local frame $e_{L^*}$ of $L^*$. Thus, in the notation in Definition \ref{Defn canonical form},
\begin{equation}\label{canonical top form 1}
	T=(\sqrt{-1})^{(n+1)^2}  e^{\phi(z)} \det g(z) ~dz\wedge d\xi\wedge d\oo{z}\wedge d\oo{\xi}.
\end{equation}
Suppose $(\widetilde{U}, \widetilde{z})$ is another trivializing coordinate chart with a local frame $\widetilde{e}_{L^*}$ of $L^*$.  It induces another coordinate system $(\widetilde{z}, \widetilde{\xi})$ for $L^*$ over $\widetilde{U}.$  Over $U\cap\widetilde{U}$,  we have the transition map $\widetilde{z}=F(z)$ for some holomorphic map $F$, and $\widetilde{\xi}=\eta(z)\xi$ for some holomorphic nonvanishing function $\eta$. Writing $e^{\widetilde{\phi}(\widetilde{z})}=h^*(\widetilde{e}_{L^*}(\widetilde{z}), \widetilde{e}_{L^*}(\widetilde{z}) )$ on $\widetilde{U}$, and $g(\widetilde{z})= (\widetilde{g}_{k\oo{l}}(\widetilde{z}))_{1\leq k, l\leq n}$ for the metric under the new coordinates, then at every $ p \in \pi^{-1}(U\cap\widetilde{U})$,
\begin{align*}
	\det g=&\det \bigl(g_{i\oo{j}}\bigr)=\det \biggl(\widetilde{g}_{k\oo{l}}\frac{\partial \widetilde{z_k}}{\partial z_i} \oo{\frac{\partial \widetilde{z_l}}{\partial z_j}}\biggr)=\det \widetilde{g}~ |\det JF|^2,\\
	e^{\phi(z)}=&h^*\bigl(e_{L^*}(z), e_{L^*}(z)\bigr)=h^*\bigl(\eta(z)\widetilde{e}_{L^*}(\widetilde{z}), \eta(z)\widetilde{e}_{L^*}(\widetilde{z})\bigr)=|\eta(z)|^2 e^{\widetilde{\phi}(\widetilde{z})}.
\end{align*}
By noting that at $p$,
\begin{align*}
	d\widetilde{z}\wedge d\widetilde{\xi} \wedge d\oo{\widetilde{z}}\wedge d\oo{\widetilde{\xi}}=|\det JF|^2|\eta|^2 dz\wedge d\xi \wedge d\oo{z}\wedge d\oo{\xi},
\end{align*}
we conclude that the definitions of $T$ coincide in the two coordinate systems $(z, \xi)$ and $(\widetilde{z}, \widetilde{\xi})$, and hence $T$ is a well-defined global $(n+1,n+1)$-form on $L^*$, as claimed. We record this result as:

\begin{prop}\label{prop_T}
The canonical top form $T$, given by \eqref{canonical top form}, is independent of the choice of local coordinate chart $(U, z)$ and frame $e_{L^*}$. Hence, $T$ is a globally defined $(n+1, n+1)$-form on $L^*$.
\end{prop}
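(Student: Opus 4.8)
The plan is to prove Proposition~\ref{prop_T} by a direct local computation: fix two overlapping trivializing charts of $(M,g;L,h)$, write down the transition data relating their induced coordinates on $L^*$, and check that the three ingredients making up $T$ in \eqref{canonical top form} — namely $e^{\phi(z)}$, $\det g(z)$, and the top-degree form $dz\wedge d\xi\wedge d\oo z\wedge d\oo\xi$ — transform by reciprocal factors, so that their product is unchanged. Since the expression is manifestly $C^\infty$ in each chart, patching then yields a global smooth form.

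Concretely, let $(U,z)$ and $(\widetilde U,\widetilde z)$ be trivializing charts with frames $e_{L^*}$, $\widetilde e_{L^*}$ of $L^*$, inducing coordinates $(z,\xi)$ and $(\widetilde z,\widetilde\xi)$ on the corresponding preimages in $L^*$. Over $U\cap\widetilde U$ one has $\widetilde z=F(z)$ for a biholomorphism $F$ and $\widetilde\xi=\eta(z)\xi$ for a nowhere-vanishing holomorphic $\eta$ (equivalently $e_{L^*}(z)=\eta(z)\widetilde e_{L^*}(\widetilde z)$). First I would record the transformation of $\det g$: since $\omega_g$ is intrinsically defined, $g_{i\oo j}=\widetilde g_{k\oo l}\,\partial_{z_i}\widetilde z_k\,\oo{\partial_{z_j}\widetilde z_l}$, hence $\det(g_{i\oo j})=\det(\widetilde g_{k\oo l})\,|\det JF|^2$, where $JF$ is the holomorphic Jacobian of $F$. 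Next, sesquilinearity of the Hermitian metric $h^*$ gives $e^{\phi(z)}=h^*(e_{L^*},e_{L^*})=|\eta(z)|^2\,e^{\widetilde\phi(\widetilde z)}$.

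The only point requiring a little care is the transformation of the $(n+1,n+1)$-form. From $\widetilde\xi=\eta(z)\xi$ we get $d\widetilde\xi=\eta\,d\xi+\xi\,d\eta$, where $d\eta$ is a combination of the $dz_i$; upon wedging with $d\widetilde z_1\wedge\cdots\wedge d\widetilde z_n=\det(JF)\,dz_1\wedge\cdots\wedge dz_n$ the $\xi\,d\eta$ term is annihilated, so $d\widetilde z\wedge d\widetilde\xi=\det(JF)\,\eta\;dz\wedge d\xi$, and similarly for the conjugates. Reordering the $n+1$ holomorphic against the $n+1$ antiholomorphic differentials produces precisely the sign that the factor $(\sqrt{-1})^{(n+1)^2}$ is designed to absorb (the same normalization that makes $T$ real-valued), so that $d\widetilde z\wedge d\widetilde\xi\wedge d\oo{\widetilde z}\wedge d\oo{\widetilde\xi}=|\det JF|^2\,|\eta|^2\;dz\wedge d\xi\wedge d\oo z\wedge d\oo\xi$. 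Multiplying the three transformation factors, $(|\eta|^{-2})\,(|\det JF|^{-2})\,(|\det JF|^2|\eta|^2)=1$, so the two local expressions for $T$ agree on $\pi^{-1}(U\cap\widetilde U)$.

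Finally, since each local expression is $C^\infty$ and they coincide on all overlaps, they patch to a globally defined smooth $(n+1,n+1)$-form on $L^*$, which is the assertion. I do not expect a genuine obstacle: the argument is bookkeeping, and the only things to watch are the direction of the transition cocycle (keeping $\eta$ rather than $\eta^{-1}$ consistent with the convention $\widetilde\xi=\eta(z)\xi$) and the sign from reordering the wedge, both of which are handled cleanly by the $(\sqrt{-1})^{(n+1)^2}$ prefactor.
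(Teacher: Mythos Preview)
Your proposal is correct and follows essentially the same approach as the paper: pick two overlapping trivializing charts, record the transition $\widetilde z=F(z)$, $\widetilde\xi=\eta(z)\xi$, and verify that the three factors $e^{\phi}$, $\det g$, and $dz\wedge d\xi\wedge d\oo z\wedge d\oo\xi$ transform by $|\eta|^2$, $|\det JF|^2$, and $|\det JF|^{-2}|\eta|^{-2}$ respectively, so that $T$ is invariant. The only minor remark is that your discussion of the sign from reordering is unnecessary here---since both local expressions carry the same $(\sqrt{-1})^{(n+1)^2}$ prefactor and the same ordering convention, the transformation $d\widetilde z\wedge d\widetilde\xi\wedge d\oo{\widetilde z}\wedge d\oo{\widetilde\xi}=|\det JF|^2|\eta|^2\,dz\wedge d\xi\wedge d\oo z\wedge d\oo\xi$ holds without any sign to absorb.
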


We next prove Theorem \ref{Thm localization} and Corollary \ref{Cor Fefferman expansion}. For that, we need the following relation between the two types of Bergman kernels. %on the disk bundle and that associated to the line bundle.
\begin{thm}\label{Thm relation between two BKs}
Let $(X, g; L, h)$ be a polarized manifold, and $(L^*, h^*)$ be the dual line bundle of $(L,h).$  Let $T$ denote the canonical top form of $(L^*,h^*)$ and
	\begin{equation*}
		D=D(L^*)=\{v\in L^*: \rho(v):=1-|v|^2_{h^*}>0\}
	\end{equation*}
the disk bundle of  $(L^*, h^*)$. Let $\mathcal{C}_X$ denote the canonical line bundle of $(X,g)$ equipped with the Hermitian metric $H=(\det g)^{-1}$. Then, the Bergman kernel form $K_D$ of $D$ and the Bergman kernel function $B_{k+1}$ of the line bundle $L^{k+1}\otimes \mathcal{C}_X$ has the following relation:
	\begin{equation}\label{relation between two BKs}
		K_D=\Bigl(\sum_{k=0}^{\infty} \frac{k+1}{2^{n+1}\pi} (1-\rho)^k B_{k+1}\Bigr)T.
	\end{equation}
The infinite series above converges uniformly on every compact subset of $D$.
\end{thm}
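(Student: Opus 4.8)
The plan is to decompose the Bergman space $A^2_{(n+1,0)}(D)$ using the natural circle action on $D$ and to identify its pieces with the Bergman spaces of $L^{k+1}\otimes\mathcal{C}_X$. Let $R_\theta\colon v\mapsto e^{\sqrt{-1}\theta}v$ denote rotation in the fibers of $L^*$; it preserves $\rho$, hence $D$, and acts unitarily on $L^2_{(n+1,0)}(D)$. For $f\in A^2_{(n+1,0)}(D)$ and $k\in\mathbb{Z}$, set $f_k:=\frac{1}{2\pi}\int_0^{2\pi}e^{-\sqrt{-1}(k+1)\theta}\,R_\theta^*f\,d\theta$. These forms are holomorphic, mutually orthogonal, satisfy $\sum_k\|f_k\|^2=\|f\|^2$ and $f=\sum_k f_k$, and $f_k=0$ for $k<0$ because $D$ contains a neighborhood of the zero section of $L^*$. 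In a trivializing chart $(U,z)$ with frame $e_{L^*}$ and induced fiber coordinate $\xi$ (so that $D\cap\pi^{-1}(U)=\{|\xi|^2e^{\phi(z)}<1\}$), one has $f_k=a_k(z)\,\xi^k\,dz_1\wedge\cdots\wedge dz_n\wedge d\xi$ with $a_k$ holomorphic; comparing the transition laws for $e_{L^*}$, for $\xi$, and for $dz_1\wedge\cdots\wedge dz_n$ shows that the local coefficients $a_k$ patch into a global holomorphic section of $L^{k+1}\otimes\mathcal{C}_X$, and conversely every such section arises this way. Thus the correspondence $\iota(s)=s(z)\,\xi^k\,dz_1\wedge\cdots\wedge dz_n\wedge d\xi$ (locally) identifies $A^2(X,L^{k+1}\otimes\mathcal{C}_X)$ with the weight-$(k+1)$ subspace $\mathcal{H}_k\subset A^2_{(n+1,0)}(D)$, and $A^2_{(n+1,0)}(D)=\bigoplus_{k\ge 0}\mathcal{H}_k$ as a Hilbert direct sum.

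\textbf{Rescaling of norms.} Next I would compute how $\iota$ affects norms by integrating over the fibers. Using $(\sqrt{-1})^{(n+1)^2}\,dz\wedge d\xi\wedge d\bar z\wedge d\bar\xi=2^{n+1}\,dV_{\mathrm{euc}}(z,\xi)$, Fubini over the fiber disk $\{|\xi|<e^{-\phi(z)/2}\}$, the elementary identity $\int_{|\xi|<r}|\xi|^{2k}\,dA(\xi)=\pi r^{2k+2}/(k+1)$, together with $dV_g=\det g\,dV_{\mathrm{euc}}(z)$ and $|s|^2_{h^{k+1}\otimes H}=|s(z)|^2e^{-(k+1)\phi}(\det g)^{-1}$, all of the weight factors combine to give
\begin{equation*}
\|\iota(s)\|^2_{L^2_{(n+1,0)}(D)}=\frac{2^{n+1}\pi}{k+1}\,\|s\|^2_{L^2(X,\,L^{k+1}\otimes\mathcal{C}_X)}.
\end{equation*}
Consequently, if $\{s_{k+1,j}\}_j$ is an orthonormal basis of $A^2(X,L^{k+1}\otimes\mathcal{C}_X)$, then $\bigl\{\sqrt{(k+1)/(2^{n+1}\pi)}\;\iota(s_{k+1,j})\bigr\}_{k\ge0,\,j}$ is an orthonormal basis of $A^2_{(n+1,0)}(D)$.

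\textbf{Assembling the kernel and convergence.} Plugging this basis into the definition of $K_D$ and working in a trivializing chart, I would use $|\xi|^{2k}=(1-\rho)^k e^{-k\phi}$ (since $1-\rho=|\xi|^2e^{\phi}$), the identity $\sum_j|s_{k+1,j}(z)|^2=B_{k+1}(z)\,e^{(k+1)\phi}\det g$, and $(\sqrt{-1})^{(n+1)^2}\,dz\wedge d\xi\wedge d\bar z\wedge d\bar\xi=T/(e^{\phi}\det g)$; the factors of $e^{\phi}$ and $\det g$ cancel and one arrives at exactly \eqref{relation between two BKs}, the identity holding pointwise by Tonelli since all terms are nonnegative multiples of $T$. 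For the convergence claim, the partial sums $\sum_{k=0}^N\frac{k+1}{2^{n+1}\pi}(1-\rho)^k B_{k+1}$ are continuous and increase pointwise to the real-analytic (in particular continuous) function $K_D/T$, so Dini's theorem gives uniform convergence on every compact subset of $D$. (Alternatively, on a compact subset $\rho$ is bounded below and, when $X$ admits a complete \k metric, $B_{k+1}=O(k^n)$ by Proposition~\ref{Prop TYZ expansion}, furnishing a summable geometric majorant; the Dini argument has the advantage of not requiring any completeness hypothesis.)

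\textbf{Main obstacle.} The only genuinely non-routine step is the first one: verifying that the circle-Fourier decomposition of $A^2_{(n+1,0)}(D)$ is complete and that its weight-$(k+1)$ piece is canonically $A^2(X,L^{k+1}\otimes\mathcal{C}_X)$. This rests on the Hartogs-type fact that a holomorphic $(n+1,0)$-form on the disk bundle has no negative fiber-frequencies, and on checking that the cocycle governing the $\xi^k$-coefficient is precisely that of $L^{k+1}\otimes\mathcal{C}_X$. Everything afterward is careful bookkeeping of the constants $2^{n+1}$, $\pi$, $k+1$ and of the Hermitian weights $e^{\phi}$ and $\det g$.
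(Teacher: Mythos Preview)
Your proposal is correct and follows essentially the same approach as the paper: decompose $A^2_{(n+1,0)}(D)$ via the $S^1$-action into weight subspaces, identify the weight-$(k+1)$ piece with $A^2(X,L^{k+1}\otimes\mathcal{C}_X)$ up to the explicit scaling factor $\sqrt{(k+1)/(2^{n+1}\pi)}$, and assemble $K_D$ from the resulting orthonormal basis. The only difference is in the convergence step: the paper obtains a summable geometric majorant directly from the elementary bound $B_k\le C_Y k^n$ on compact $Y\subset X$ (which follows from the extremal characterization and the sub-mean value inequality and does \emph{not} require completeness or Proposition~\ref{Prop TYZ expansion}), whereas your Dini argument is a clean alternative that avoids invoking any growth estimate on $B_k$ at all.
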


\begin{proof}
Recall that the Bergman space of $D$ is defined by
\begin{equation*}
	A^2_{(n+1, 0)}(D):=\left\{ \sigma \text{ is a holomorphic $(n+1, 0)$ form with }~  (\sqrt{-1})^{(n+1)^2}\int_D \sigma\wedge\oo{\sigma}<\infty \right\}.
\end{equation*}
In what follows, we will omit the subscript $(n+1, 0)$ and simply write  $A^2(D)$ for brevity.
We define an $S^1$-action $r_{\theta}$ on $L^*$ as follows. Choose any trivializing coordinate chart $(U, z)$ of $(X, g; L, h)$ together with a frame $e_{L^*}$ of $L^*$ over $U$. This induces a trivialization of $L^*$ over $U$:
\begin{equation}\label{local trivialization of L*}
	\pi^{-1}(U) \ni \xi e_{L^*}(z)\rightarrow (z, \xi) \in U\times \mathbb{C}.
\end{equation}
Then, over $\pi^{-1}(U)$, $r_{\theta}$ is defined by  $r_{\theta}(z, \xi)=(z, e^{i\theta}\xi)$. It is clear that the definition of $r_{\theta}$ does not depend on the choice of local trivialization of $L^*$. Note also that $r_{\theta}$ restricts to a fiber-length preserving biholomorphism of $D$. %and gives an orientation-preserving diffeomorphism of $D$.
Moreover, $r_{\theta}$ induces a  pullback operator $r_\theta^*$ acting on the  differential forms on $D$, and we have
\begin{equation*}
	\int_D \sigma\wedge\oo{\sigma}=\int_D r_\theta^* \Bigl(\sigma\wedge\oo{\sigma}\Bigr)= \int_D \Bigl( r_{\theta}^*\sigma \wedge \oo{r_{\theta}^*\sigma} \Bigr), \quad \forall \sigma\in A^2(D).
\end{equation*}
%Write $(z, \xi)$ for the coordinates obtained from the trivialization of $L^*$ as in \eqref{local trivialization of L*} and $\sigma=a(z, \xi) dz\wedge d\xi$ on $\pi^{-1}(U)$. Then it follows that
%\begin{equation*}
	%r_\theta^* \Bigl(\sigma\wedge\oo{\sigma}\Bigr)=|a(z, e^{i\theta}\xi)|^2dz\wedge d\xi\wedge d\oo{z}\wedge d\oo{\xi}= r_{\theta}^*\sigma \wedge \oo{r_{\theta}^*\sigma}.
%\end{equation*}
Therefore,
\begin{equation}\label{S1 action preserves the L2 norm}
	\|r_{\theta}^*(\sigma)\|_{L^2(D)}=\|\sigma\|_{L^2(D)}, \quad \forall \sigma\in A^2(D).
\end{equation}
%That is, $r_{\theta}$ induces a norm-preserving pullback on the form in $A^2(D)$.
Consequently, the map $\mu: \theta \in \mathbb{T} \rightarrow r_{\theta}^*$ gives a unitary representation of the torus $\mathbb{T}$ on the Hilbert space $A^2(D).$

%Moreover, $r_{\theta}$ induces a norm-preserving pullback on the form in $A^2(D)$, which we denote by $r_{\theta}^*$. That is,
%\begin{equation}\label{S1 action preserves the L2 norm}
%	\|r_{\theta}^*(\sigma)\|_{L^2(D)}=\|\sigma\|_{L^2(D)}, \quad \forall \sigma\in A^2(D).
%\end{equation}
%\begin{proof}[Proof of \eqref{S1 action preserves the L2 norm}]
%	Write $\{U_j\}$ for a countable, locally finite open cover of $M$, such that each $U_j$ is contained in some coordinate chart. Let $\{\rho_j\}$ be a partition of unity subordinate to $\{U_j\}$. Write $(z, \xi)$ for the coordinates obtained from the trivialization of $L^*$ as in \eqref{local trivialization of L*} and $\sigma=a(z, \xi) dz\wedge d\xi$. Then
%	\begin{align*}
%		i^{(n+1)^2}\int_D \sigma\wedge\oo{\sigma}=&\sum_{j} i^{(n+1)^2} \int_{\pi^{-1}(U_j)} \rho_j |a(z, \xi)|^2 dz\wedge d\xi\wedge d\oo{z}\wedge d\oo{\xi}\\
%		=&\sum_{j} i^{(n+1)^2} \int_{\pi^{-1}(U_j)} \rho_j |a(z, e^{i\theta}\xi)|^2 dz\wedge d\xi\wedge d\oo{z}\wedge d\oo{\xi}.
%	\end{align*}
%	Note $r_{\theta}$ preserves $\pi^{-1}(U_j)$, and over $\pi^{-1}(U_j)$, $r_{\theta}^*$ acts on $\sigma$ as
%	\begin{equation*}
%		r_{\theta}^*\sigma=r_{\theta}^*\bigl(a(z, \xi) dz\wedge d\xi\bigr)=a(z, e^{i\theta}\xi) e^{i\theta} dz\wedge d\xi.
%	\end{equation*}
%	Hence the above integral is equal to
%	\begin{equation*}
%		i^{(n+1)^2}\int_D r_{\theta}^*\sigma\wedge\oo{r_{\theta}^*\sigma}.
%	\end{equation*}
%\end{proof}

Next, for any $m\in \mathbb{Z}$ we define
\begin{equation*}
	A^2_{m}(D)=\{\sigma\in A^2(D): r_{\theta}^*\sigma=e^{\sqrt{-1}m\theta}\sigma ~\text{ for all } \theta \in \mathbb{T} \}.
\end{equation*}
Since $r_{\theta}$ preserves the $L^2$ norm on $D$, each $A^2_{m}(D)$ is a closed subspace of $A^2(D)$. Let $\sigma \in  A^2(D).$ Under a trivialization $(z, \xi)$ of $L^*$ over some open set $U \subset X$, since $\pi^{-1}(U) \cap D$ is complete circular in $\xi$, we may write
\begin{equation}\label{Taylor expansion in a disk}
	\sigma=\sum_{m=0}^{\infty} b_m(z)\xi^m dz\wedge d\xi \quad \text{on } \pi^{-1}(U) \cap D,
\end{equation}
for some holomorphic functions $b_m(z)$ on $U$. It follows readily that $\sigma\in A^2_{m+1}(D)$ if and only if $b_j(z)=0$ for all $j\neq m$, i.e., when $\sigma=b_m(z)\xi^m dz\wedge d\xi$.
%\begin{equation*}
%	\sigma\in A^2_{m+1}(D) \quad \text{if and only if}\quad b_j(z)=0 ~~\text{for all}~ j\neq m ~(\text{i.e., } \sigma=b_m(z)\xi^m dz\wedge d\xi).
%\end{equation*}
Furthermore, one can easily verify that when $ m \leq 0$, $A^2_m(D)$ is always trivial; and if $m_1 \neq m_2,$ then $A^2_{m_1}(D)  \perp A^2_{m_2}(D)$ in $A^2(D)$.

Recall that $\mu$ defines a unitary representation of the 1-torus $\mathbb{T}$ on $A^2(D)$.
%By the Peter--Weyl theorem, $\mu$ decomposes into an orthogonal direct sum of irreducible
%finite-dimensional unitary representations of $\mathbb{T}$. Since $\mathbb{T}$ is compact and abelian,
%every irreducible finite-dimensional unitary representation is necessarily one-dimensional. Moreover,
%such representations are of the form
%\[
%\theta \in \mathbb{T} \;\longmapsto\; e^{im\theta}, \qquad m \in \mathbb{Z}.
%\]
%Consequently, $A^2(D)$ decomposes into an orthogonal direct sum of one-dimensional subspaces $H_k$,
%indexed by $k \in \mathbb{Z}$, and each $H_k$ is contained in some $A^2_m(D)$. Using this, we can
%establish the following proposition, which states that $A^2(D)$ decomposes into an orthogonal direct sum of the subspaces
%$A^2_{m+1}(D)$. This proposition can also be proved directly by more elementary arguments from complex and Fourier analysis,
%without appealing to the Peter--Weyl theorem.
By classical harmonic/Fourier analysis (e.g., Stone's theorem and the spectral theorem), $\mu$ decomposes into an orthogonal direct sum of (irreducible)
one-dimensional unitary representations of $\mathbb{T}$ of the form
\[
\theta \in \mathbb{T} \;\longmapsto\; e^{\sqrt{-1}m\theta}, \qquad m \in \mathbb{Z}.
\]
Consequently, $A^2(D)$ decomposes into an orthogonal direct sum of one-dimensional subspaces $H_k$,
indexed by $k \in \mathbb{Z}$, and each $H_k$ is contained in some $A^2_m(D)$. Using this, we can
establish the following proposition, which states that $A^2(D)$ decomposes into an orthogonal direct sum of the subspaces $A^2_{m+1}(D)$. Since the proof is standard, we omit it here.

\begin{prop}\label{Prop orthogonal decomposition of A2}
We have the orthogonal decomposition
\begin{equation*}
		A^2(D)=\bigoplus_{m=0}^{\infty}{}^{\perp} A^2_{m+1}(D).
\end{equation*}
%In particular, for any $\sigma\in A^2(D)$ we have
	%\begin{equation*}
		%\sigma=\sum_{m=0}^{\infty} P_{m+1}\sigma \quad \text{in } A^2(D).
	%\end{equation*}
%\added{Is the part of ``in particular'' okay? The proof of this ``in particular'' part is skipped.}
\end{prop}
%\begin{proof}[Proof of the proposition]
%Suppose $\sigma_k\in A^2_{k+1}(D)$ and $\sigma_l\in A^2_{l+1}(D)$ with $k\neq l$. Then
%\begin{equation*}
		%\int_D\sigma_k \wedge \oo{\sigma_l}=\sum_{j} \int_{\pi^{-1}(U_j)} \rho_j \sigma_k\wedge \oo{\sigma_l}
		%=\sum_{j} \int_{z\in U_j} \int_{|\xi|^2<h(z, \oo{z})} \rho_ja_k(z)\oo{b_l(z)}\xi^k \oo{\xi^l} dz\wedge d\xi\wedge d\oo{z}\wedge d\oo{\xi}.
	%\end{equation*}
%Here over each $\pi^{-1}(U_j)$ we write $\sigma_k=a_k(z)\xi^k dz\wedge d\xi$ and $\sigma_l=b_l(z) \xi^l dz\wedge d\xi$. Since $\int_{|\xi|^2<h(z, \oo{z}) }\xi^k\oo{\xi}^m=0$ for $k\neq l$, it is clear that $\sigma_k\perp \sigma_l$ in $A^2(D)$.
	
%Next we prove that if $\sigma\perp A_{m+1}^2(D)$ for all $m\geq 0$, then $\sigma=0$. Indeed, since $P_{m+1}: A^2(D)\rightarrow A^2_{m+1}(D)$ is the orthogonal projection, we have
%\begin{equation*}
		%0=(\sigma, P_{m+1}\sigma)_{A^2}=(P_{m+1}\sigma, P_{m+1}\sigma)_{A^2}.
	%\end{equation*}
	%Therefore, $P_{m+1}\sigma=0$ for any $m\geq 0$. By \eqref{Pm defn} we immediately have $\sigma=0$. Thus the proposition is proved.
%\end{proof}

%%%%%%%%%%%%%%%%%%%%%%%%%%%%%%%%%%%%%%%%%%%%%%%%%

Recall that the Bergman space $A^2(X, L^{m+1} \otimes \mathcal{C}_X)$ consists of $L^2$-integrable holomorphic sections of $L^{m+1} \otimes \mathcal{C}_X$.
Next we will prove that this Bergman space is isomorphic to $A^2_{m+1}(D)$ as a Hilbert space. %can be identified with $A^2(X,  L^{m+1} \otimes \mathcal{C}_X)$.
\begin{lemma}\label{Lem Hilbert space isomorphism}
	There exists an isomorphism between Hilbert spaces $I: A^2_{m+1}(D)\rightarrow A^2(X, L^{m+1} \otimes \mathcal{C}_X)$ given by the following. Let $\sigma_m \in A^2_{m+1}(D)$ and $(z, \xi)$ be a trivialization of $L^*$ over $U\subset X$ with $e_{L^*}$ (resp. $e_L$) the local frame of $L^*$ (resp. $L$) over $U$. If we write $\sigma_m=b(z)\xi^m dz\wedge d\xi$ on $\pi^{-1}(U) \cap D$, then $I$ is defined by
	\begin{equation*}
		I(\sigma_m)=\sqrt{\frac{2^{n+1}\pi}{m+1}}b(z) e_L^{m+1} \otimes dz \quad \text{over } U.
	\end{equation*}
\end{lemma}

\begin{proof}[Proof of Lemma $\ref{Lem Hilbert space isomorphism}$]
	We first prove that $I$ is well-defined. For that, we need to verify that the definition of $I$ is independent of the choice of the coordinate chart and trivialization of $L^*$.
	Let $(\widetilde{z}, \widetilde{\xi})$ be another such trivialization of $L^*$ over $V$ with $V\cap U\neq \emptyset$. Under the new coordinate system, we write $\sigma_m=\widetilde{b}(\widetilde{z}) \widetilde{\xi}^m d\widetilde{z}\wedge d\widetilde{\xi}$. Over $U \cap V$, we have the relation $\widetilde{z}=F(z)$ for some holomorphic map $F$, and $\widetilde{e}_{L^*}(\widetilde{z})=\frac{1}{\eta(z)}e_{L^*}(z)$ for some holomorphic nonvanishing function $\eta$, so that $\widetilde{\xi}=\eta(z)\xi$ and $\widetilde{e}_{L}(\widetilde{z})=\eta(z) e_{L}(z)$. Using this relation, we see that, over $U \cap V$,
	\begin{equation*}
		\sigma_m=\widetilde{b}(\widetilde{z}) \widetilde{\xi}^m d\widetilde{z}\wedge d\widetilde{\xi}=\widetilde{b}(F(z)) ~\eta(z)^{m+1} \xi^m\det JF~ dz\wedge d\xi.
	\end{equation*}
	By comparing it with $\sigma_m=b(z)\xi^m dz\wedge d\xi$, we obtain $\widetilde{b}(F(z))\eta(z)^{m+1}\det JF=b(z)$. Therefore,
	\begin{equation*}
		\widetilde{b}(\widetilde{z}) \widetilde{e}_L^{m+1} \otimes d\widetilde{z} =\widetilde{b}(F(z))  \det JF ~\eta(z)^{m+1} e_L^{m+1} \otimes dz=b(z)  e_L^{m+1} \otimes dz.
	\end{equation*}
Consequently, the image $I(\sigma_m)$ is the same in both coordinate systems, and thus it gives a well-defined global holomorphic section of $L^{m+1} \otimes \mathcal{C}_X$.
	
It is easy to that see $I$ is linear. We next prove $I(\sigma_m) \in A^2(X,  L^{m+1} \otimes \mathcal{C}_X)$ and $I$ is unitary. Fix $\sigma_m, \tau_m\in A^2_{m+1}(D)$. Pick a countable, locally finite open cover $\{U_j\}$ of $X$, such that each $U_j$ is contained in some trivializing coordinate chart. Let $\{\rho_j\}$ be a partition of unity subordinate to $\{U_j\}$. Let $(z, \xi)=(z_j, \xi_j)$ be the coordinates obtained from the trivialization of $L^*$ over $U_j$ as in \eqref{local trivialization of L*}. Over $U_j$, we write $\tau_m=d(z)\xi^m dz\wedge d\xi$ and thus $I(\tau_m)=\sqrt{\frac{2^{n+1}\pi}{m+1}} d(z) e_L^{m+1} \otimes dz$. Likewise, we write $\sigma_m$ and $I(\sigma_m)$ as in Lemma \ref{Lem Hilbert space isomorphism} in the coordinate system $(z, \xi).$ Then we have
	\begin{align*}
		\bigl(I(\sigma_m), I(\tau_m)\bigr)_{L^2(X, L^{m+1} \otimes \mathcal{C}_X)}=\sum_{j}\frac{2^{n+1}\pi}{m+1} \int_{U_j}\rho_j b(z)\oo{d(z)} h^{m+1}~ \frac{(\sqrt{-1})^{n^2}}{2^n} dz\wedge d\oo{z}.
	\end{align*}
	On the other hand, we also have
	\begin{align*}
		\bigl(\sigma_m, \tau_m\bigr)_{A^2(D)}=&\sum_{j} (\sqrt{-1})^{(n+1)^2} \int_{D \cap  \pi^{-1}(U_j)} \rho_j b(z) \oo{d(z)} ~|\xi|^{2m} dz\wedge d\xi \wedge d\oo{z} \wedge d\oo{\xi}\\
		=& \sum_{j} \int_{U_j}\rho_j b(z)\oo{d(z)}~\Bigl(\int_{|\xi|^2<h(z, \oo{z})}  |\xi|^{2m} \sqrt{-1}~d\xi \wedge d\oo{\xi}\Bigr) ~(\sqrt{-1})^{n^2}dz\wedge d\oo{z}\\
		=&\sum_{j}\frac{2\pi}{m+1} \int_{U_j}\rho_j b(z)\oo{d(z)} h^{m+1}~ (\sqrt{-1})^{n^2} dz\wedge d\oo{z}.
	\end{align*}
	Therefore,
	\begin{equation*}
		\bigl(\sigma_m, \tau_m\bigr)_{A^2(D)}=\bigl(I(\sigma_m), I(\tau_m)\bigr)_{L^2(X, L^{m+1} \otimes \mathcal{C}_X)}.
	\end{equation*}
Taking $\tau_m=\sigma_m$, we see that $I(\sigma_m) \in A^2(X,  L^{m+1} \otimes \mathcal{C}_X)$; and the last equality shows $I$ is unitary.

Finally to prove the surjectivity of $I$, we define another operator $J:~~ A^2(X, L^{m+1} \otimes \mathcal{C}_X) \rightarrow A^2_{m+1}(D)$ as follows. Let $s_m \in A^2(X,  L^{m+1} \otimes \mathcal{C}_X)$ and let $(z, \xi)$ be a trivialization of $L^*$ over $U\subset X$ with $e_{L^*}$ (resp. $e_L$) the local frame of $L^*$ (resp. $L$). Under the coordinate system, write $s_m=\gamma(z) e_L^{m+1} \otimes dz$ over $U,$ and define $J(s_m)=\sqrt{\frac{m+1}{2^{n+1}\pi}} \gamma(z)\xi^m dz\wedge d\xi $ over $\pi^{-1}(U)$. Similarly as above, one can verify that $J$ is a well-defined linear unitary operator from  $A^2(X, L^{m+1} \otimes \mathcal{C}_X)$ to $A^2_{m+1}(D)$. Moreover, it is easy to see $J$ is the inverse of $I$. Hence $I$ is surjective, and this proves the lemma.
%\added{The surjectivity?}
\end{proof}

We are in a position to prove \eqref{relation between two BKs}. Let $\{s_{k+1, \alpha}\}_{\alpha}$ be an orthonormal basis of $A^2(X,  L^{k+1} \otimes \mathcal{C}_X)$. Then the Bergman kernel of
$(L^{k+1} \otimes \mathcal{C}_X, h^{k+1} \otimes H)$ is given by
\begin{equation*}
	K_{k+1}=\sum_{\alpha} s_{k+1, \alpha}\otimes \oo{s_{k+1, \alpha}}.
\end{equation*}
In a local trivialization $(z, \xi)$ of $L^*$ over $U$ as in \eqref{local trivialization of L*}, we can write $s_{k+1, \alpha}=\sqrt{\frac{2^{n+1}\pi}{k+1}} b_{k+1, \alpha} dz\otimes e_L^{k+1}$ for some holomorphic function $b_{k+1, \alpha}$. Then the Bergman kernel function of $L^{k+1} \otimes \mathcal{C}_X$ is
\begin{equation}\label{Bergman function in local coordiantes}
	B_{k+1}:=|K_{k+1}|_{L^{k+1}\otimes \mathcal{C}_X} =\sum_{\alpha}\frac{2^{n+1}\pi}{k+1} |b_{k+1, \alpha}|^2 (\det g)^{-1} h^{k+1}.
\end{equation}
From Proposition \ref{Prop orthogonal decomposition of A2} and Lemma \ref{Lem Hilbert space isomorphism}, it follows that $\{I^{-1}(s_{k+1, \alpha})\}_{k, \alpha}$ forms an orthonormal basis of $A^2(D)$. Therefore, over $\pi^{-1}(U)$ we have
\begin{align*}
	K_D=&(\sqrt{-1})^{(n+1)^2}\sum_{k=0}^{\infty}\sum_{\alpha} I^{-1}(s_{k+1, \alpha})\wedge \oo{I^{-1}(s_{k+1, \alpha})}\\
	=&(\sqrt{-1})^{(n+1)^2}\sum_{k=0}^{\infty}\sum_{\alpha} |b_{k+1, \alpha}|^2 |\xi|^{2k}dz\wedge d\xi \wedge d\oo{z} \wedge d\oo{\xi}.
\end{align*}
By \eqref{Bergman function in local coordiantes}, we can further simplify it into
\begin{align*}
	K_D=&\Bigl(\sum_{k=0}^{\infty} \frac{k+1}{2^{n+1}\pi}B_{k+1} |\xi|^{2k} h^{-k}\Bigr) (\sqrt{-1})^{(n+1)^2} h^{-1} \det g ~ dz\wedge d\xi \wedge d\oo{z} \wedge d\oo{\xi}\\
	=& \Bigl( \sum_{k=0}^{\infty} \frac{k+1}{2^{n+1}\pi} B_{k+1} (1-\rho)^k \Bigr) T.
\end{align*}
Here we have used the definition \eqref{canonical top form} of $T$ and the fact that $\rho=1-|\xi|^2h^{-1}$ in the last equality. Therefore, \eqref{relation between two BKs} is verified.

To prove the uniform convergence, recall that on any compact set $Y\subset X$, there exists some constant $C_Y>0$, independent of $k$, such that
\begin{equation}\label{eqn bk cy}
		B_{k}=|K_{k}|_{L^{k}\otimes \mathcal{C}_X} \leq C_Y k^n.
\end{equation}
This follows from the extremal characterization of the Bergman function and the sub-mean value inequality for holomorphic sections (see \cite[Equation (2.5)]{Ber03} and \cite[Lemma 4.1]{HKSX16}).
Fix any compact subset $Z$ of $D$. Then, writing $\pi: (L^*, h^*)\rightarrow X$ for the natural projection, $Y=\pi(Z)$ is compact in $X$, and thus \eqref{eqn bk cy} holds.
Moreover, there exists $r_0\in \mathbb{R}$ such that $0\leq 1-\rho<r_0<1$ on $Z$. Then on $Z$ the summand in the infinite sum of \eqref{relation between two BKs} is less than
	\begin{equation*}
	C_Y \frac{k+1}{2^{n+1}\pi} ~(r_0)^k ~(k+1)^n,
	\end{equation*}
which proves the uniform convergence of the infinite sum over $Z$, as desired. This finishes the proof of Theorem \ref{Thm relation between two BKs}.
\end{proof}

Before proving Theorem \ref{Thm localization}, we need another preliminary result. Let $(M, g; L,h)$ and $(L^*, h^*)$ be as in Theorem \ref{Thm localization}. Let $(U, z)$ be a trivializing coordinate chart, inducing a coordinate system $(z, \xi)$ of $L^*$ over $U$.
Let $\mathcal{B} \subset\subset U$ be an Euclidean ball in $(U, z)$, which in particular admits a complete \k metric. Write $\pi: L^*\rightarrow M$ for the natural projection from $L^*$ to $M$. Write
\begin{equation*}
	D_{\mathcal{B}}=D\cap \pi^{-1}(\mathcal{B}), \quad\quad S_{\mathcal{B}}=S\cap \pi^{-1}(\mathcal{B}).
\end{equation*}
We then show that the Bergman kernel forms $K_{D_{\mathcal{B}}}$ and $K_D$ of $D_{\mathcal{B}}$ and $D$ satisfy the following:
\begin{prop}\label{Prop localize BK to small disk bundle}
	The difference $K_D-K_{D_{\mathcal{B}}}$ extends as a $C^{\infty}$-smooth $(n+1, n+1)$-form on $D_{\mathcal{B}}\cup S_{\mathcal{B}}$.
\end{prop}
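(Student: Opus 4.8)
The plan is to combine Theorem~\ref{Thm relation between two BKs} with the \emph{universality} of the Tian--Yau--Zelditch--Catlin coefficients in Proposition~\ref{Prop TYZ expansion}, reducing the statement to the convergence --- up to $S_{\mathcal{B}}$, and in every $C^m$-norm --- of a single power series built from a super-polynomially decaying sequence of functions on $\mathcal{B}$. First I would apply Theorem~\ref{Thm relation between two BKs} (which needs no completeness hypothesis) to the polarized manifold $(M,g;L,h)$ and, separately, to $(\mathcal{B},g|_{\mathcal{B}};L|_{\mathcal{B}},h|_{\mathcal{B}})$; after trivializing $L^*|_{\mathcal{B}}$, $\mathcal{B}$ and $D_{\mathcal{B}}$ are bounded, so the relevant Bergman spaces over them are nontrivial. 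Writing $B_{k+1}$ and $B'_{k+1}$ for the Bergman kernel functions of $L^{k+1}\otimes\mathcal{C}_M$ over $M$ and of $L^{k+1}|_{\mathcal{B}}\otimes\mathcal{C}_{\mathcal{B}}$ over $\mathcal{B}$, and noting that $\rho$ and the canonical top form $T$ of~\eqref{canonical top form} restrict unchanged from $L^*$ to $L^*|_{\mathcal{B}}$, Theorem~\ref{Thm relation between two BKs} yields, on $D_{\mathcal{B}}$,
\[
K_D-K_{D_{\mathcal{B}}}=\Theta\,T,\qquad
\Theta:=\sum_{k=0}^{\infty}\frac{k+1}{2^{n+1}\pi}\,(1-\rho)^k\bigl(B_{k+1}-B'_{k+1}\bigr),
\]
where termwise subtraction is legitimate since each of the two series in~\eqref{relation between two BKs} converges uniformly on compact subsets of $D_{\mathcal{B}}$. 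Hence it suffices to prove that $\Theta$, together with all its derivatives, extends continuously to $D_{\mathcal{B}}\cup S_{\mathcal{B}}$; multiplying by the globally smooth form $T$ (Proposition~\ref{prop_T}) then gives the claim. This is genuinely new information near $S_{\mathcal{B}}$: the individual series $\sum(k+1)(1-\rho)^kB_{k+1}$ does \emph{not} converge up to $S_{\mathcal{B}}$, since $K_D$ is singular there.

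The decisive input is that $B_{k+1}$ and $B'_{k+1}$ share the \emph{same} asymptotic expansion. By Proposition~\ref{Prop TYZ expansion} applied to $M$ (admissible by hypothesis) and to $\mathcal{B}$ (admissible since $\mathcal{B}$ carries a complete \k metric), together with Remark~\ref{rmk asymptotic expansion application}, both $B_{k+1}$ and $B'_{k+1}$ have the expansion~\eqref{TYZ expansion intro} with the \emph{identical} coefficients $a_j$, because the $a_j$ are universal polynomials in the curvature of $g$ and its covariant derivatives and the curvature of $g|_{\mathcal{B}}$ is merely the restriction of that of $g$. Consequently, by the triangle inequality and the $C^m$-estimate of Remark~\ref{rmk asymptotic expansion meaning} (applied to $B_{k+1}$ and to $B'_{k+1}$, which have the same partial sums), for every compact $X\subset\subset\mathcal{B}$ and all $N,m\geq0$ one gets
\[
\bigl\|B_{k+1}-B'_{k+1}\bigr\|_{C^m(X)}\leq C_{N,m,X}\,(k+1)^{\,n-N-1},
\]
i.e.\ $B_{k+1}-B'_{k+1}=O(k^{-\infty})$ in every $C^m$-norm on compact subsets of $\mathcal{B}$.

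Finally I would localize near the boundary. Fix $q\in D_{\mathcal{B}}\cup S_{\mathcal{B}}$, put $z_0:=\pi(q)\in\mathcal{B}$, choose a closed ball $X$ with $z_0\in\operatorname{int}X$ and $X\subset\subset\mathcal{B}$, and set $\Omega_X:=\pi^{-1}(\operatorname{int}X)\cap\oo{D}$, an open subset of $D_{\mathcal{B}}\cup S_{\mathcal{B}}$ containing $q$ with $\oo{\Omega_X}$ compact. On $\oo{\Omega_X}$ one has $1-\rho=|\xi|^2e^{\phi(z)}$, a fixed smooth function with values in $[0,1]$; a chain-rule (Fa\`a di Bruno) computation writes any order-$m$ derivative of $(1-\rho)^k$ (in the real coordinates of $L^*$ over $U$) as a finite sum of terms $k(k-1)\cdots(k-p+1)\,(1-\rho)^{k-p}\times(\text{bounded})$ with $1\leq p\leq m$; since the falling factorial vanishes when $p>k$, no negative powers of $1-\rho$ occur, and therefore $\|D^m[(1-\rho)^k]\|_{L^\infty(\Omega_X)}\leq C_{m,X}\,k^m$. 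Combining this with the decay estimate above via the Leibniz rule, the order-$m$ derivatives of the $k$-th summand of $\Theta$ are bounded on $\Omega_X$ by $C'_{m,N}\,(k+1)^{\,m+n-N}$ for every $N\geq0$; taking $N=m+n+2$ makes these bounds summable in $k$. Hence $\Theta$ and each of its formal derivative series converge absolutely and uniformly on $\Omega_X$, so $\Theta\in C^\infty(\Omega_X)$ and $\Theta\,T$ is a $C^\infty$ $(n+1,n+1)$-form on $\Omega_X$ that equals $K_D-K_{D_{\mathcal{B}}}$ on $\Omega_X\cap D_{\mathcal{B}}$. Letting $q$ vary over $D_{\mathcal{B}}\cup S_{\mathcal{B}}$ shows $K_D-K_{D_{\mathcal{B}}}$ extends to a $C^\infty$-smooth form there. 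The conceptual crux --- the only step that is not routine bookkeeping --- is the use of Remark~\ref{rmk asymptotic expansion application}: although $B_{k+1}$ and $B'_{k+1}$ (and likewise $K_D$ and $K_{D_{\mathcal{B}}}$ near $S_{\mathcal{B}}$) differ substantially for fixed $k$, the universality of the $a_j$ forces their difference to be smoothing, and it is exactly this that promotes the interior convergence of~\eqref{relation between two BKs} to convergence of the difference, with all derivatives, up to $S_{\mathcal{B}}$.
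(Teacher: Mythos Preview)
Your proposal is correct and follows essentially the same route as the paper: apply Theorem~\ref{Thm relation between two BKs} to both $M$ and $\mathcal{B}$, subtract, and use the universality of the $a_j$ (Proposition~\ref{Prop TYZ expansion} and Remark~\ref{rmk asymptotic expansion application}) to see that $B_{k+1}-B'_{k+1}=O(k^{-\infty})$ in every local $C^m$-norm, which makes the difference series and all its derivative series uniformly summable on sets of the form $\pi^{-1}(V)\cap\oo{D}$. The only cosmetic difference is the final step: you pass directly from uniform convergence of all derivative series on $\Omega_X$ to $\Theta\in C^\infty(\Omega_X)$, whereas the paper phrases this as ``each $\mathcal{L}^\alpha Q$ extends continuously to the boundary'' and then invokes the classical Seeley/Whitney extension theorem to conclude smoothness up to $S_{\mathcal{B}}$; these are equivalent formulations of the same fact for a smooth hypersurface boundary.
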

\begin{proof}
Write $\mathcal{C}_M$ and $\mathcal{C}_{\mathcal{B}}$ for the canonical line bundles of $(M,g)$ and $(\mathcal{B},g)$ respectively, both equipped with the Hermitian metric $H=(\det g)^{-1}$.
Denote by $K_{k+1}$ and $\widehat{K}_{k+1}$ the Bergman kernels of the line bundles of $L^{k+1}\otimes \mathcal{C}_M$ and $(L|_{\mathcal{B}})^{k+1}\otimes \mathcal{C}_{\mathcal{B}}$, respectively. %Likewise, Write $\mathcal{C}_{\mathcal{B}}$ for the canonical line bundle of $(\mathcal{B},g)$, equipped with the Hermitian metric $H=(\det g)^{-1}$.
%Denote by $\widehat{K}_{k+1}$ the Bergman kernel of $(L|_{\mathcal{B}})^{k+1}\otimes \mathcal{C}_{\mathcal{B}}$ over $\mathcal{B}$.
We apply Theorem \ref{Thm relation between two BKs} to $(X=M, g; L,h)$ and $(X=\mathcal{B},g; L,h)$ respectively, to obtain
	\begin{align}
		K_D&=\Bigl(\sum_{k=0}^{\infty} \frac{k+1}{2^{n+1}\pi} (1-\rho)^k B_{k+1}\Bigr)T \label{K on D},\\
		K_{D_{\mathcal{B}}}&=\Bigl(\sum_{k=0}^{\infty} \frac{k+1}{2^{n+1}\pi} (1-\rho)^k \widehat{B}_{k+1}\Bigr)T \label{K on D_B}.
	\end{align}
Fix any $p\in S_{\mathcal{B}}$. Recall that both $M$ and $\mathcal{B}$ admit complete \k metrics. Let the $a_j$ be as in Proposition \ref{Prop TYZ expansion} and note that the $a_j$ corresponding to $\widehat{B}_{k}$ are the restrictions of the $a_j$ for $B_k$ restricted to $\mathcal{B}$ (see Remark \ref{rmk asymptotic expansion application}). For $N\geq 0$, let $A_k^{(N)} :=\bigl(\frac{k}{\pi}\bigr)^n\sum_{j=0}^{N} \frac{a_j(z)}{k^j}.$   By Proposition \ref{Prop TYZ expansion} (see Remark \ref{rmk asymptotic expansion meaning}), we can pick open sets $V\subset\subset \mathcal{B}$  with $\pi(p)\in V$ such that for every integer $m \geq 1$,
%the associated local reproducing kernel $\widetilde{K}^{(N)}$ in \eqref{local reproducing kernel} for $N\geq 1$ satisfies
	\begin{align*}
		\left\| B_k- A_k^{(N)}\right\|_{C^m(V)}\leq \frac{C_{N, m}}{k^{N+1-n}}, \quad
		\left\| \widehat{B}_{k}- A_k^{(N)} \right\|_{C^m(V)}\leq \frac{C_{N, m}}{k^{N+1-n}},
	\end{align*}
	for some constant $C_{N, m}>0$ depending on $N, m, V$ and $(L, h), M, \mathcal{B}$, while independent of $k$. Consequently,
	\begin{equation}\label{difference of two Bergman functions}
		\left\| B_{k+1}- \widehat{B}_{k+1} \right\|_{C^m(V)}\leq \frac{2C_{N, m}}{(k+1)^{N+1-n}}, \quad\text{for every } N\geq n.
	\end{equation}
	For simplicity, we denote $J_{k+1}= B_{k+1}- \widehat{B}_{k+1}$. Let $\alpha=(\alpha_1, \alpha_2, \alpha_3, \alpha_4) \in \mathbb{N}^n \times \mathbb{N}^n \times \mathbb{N} \times \mathbb{N}$ be a multi-index and define the differential operator $\mathcal{L}^{\alpha}$ on $\pi^{-1}(V)$ by
$$
\mathcal{L}^{\alpha}=\frac{\partial^{|\alpha|}}{\partial z^{\alpha_1} \partial \oo{z}^{\alpha_2} \partial \xi^{\alpha_3} \partial \oo{\xi}^{\alpha_4}},
$$
in the local coordinate system $(z, \xi)$ of $L^*$ over $U$.	
%of index $\alpha$ with $|\alpha|\geq 0$.
Equation \eqref{difference of two Bergman functions} implies that
	\begin{equation}\label{J_{k+1} derivatives}
	\left| \mathcal{L}^{\alpha} \left((1-\rho)^k J_{k+1}\right)\right| \leq \frac{C_{N, \alpha} ~k^{|\alpha|}}{(k+1)^{N+1-n}} \quad \text{on } (D_{\mathcal{B}}\cup S_{\mathcal{B}})\cap \pi^{-1}(V) \quad\text{for every } N\geq n.
	\end{equation}
	where $C_{N, \alpha}$ is some constant depending on $N, \alpha, V$ and $(L, h), M, \mathcal{B}$, independent of $k$.
	
	Also, by \eqref{K on D} and \eqref{K on D_B}, we have
	\begin{equation}\label{K on D and K on D_B difference}
		K_D-K_{D_{\mathcal{B}}}=\Bigl(\sum_{k=0}^{\infty} \frac{k+1}{2^{n+1}\pi} (1-\rho)^k J_{k+1}\Bigr)T=:Q \cdot T \quad\text{on } D_{\mathcal{B}}.
	\end{equation}
Here, $Q$ is the function given by the infinite sum in the parentheses above. We next show

\medskip

\textbf{Claim.} The difference $K_D-K_{D_{\mathcal{B}}}$ extends $C^{\infty}$-smoothly across $S_{\mathcal{B}} \cap \pi^{-1}(V)$, and in particular across $p$.

\medskip
	
{\em Proof of Claim.} Let $m \geq 1$ and $\mathcal{L}^{\alpha}$ be the differential operator in $\pi^{-1}(V)$ as above with $|\alpha| \leq m$.
By \eqref{J_{k+1} derivatives}, we see that for any $N \geq n,$ there exists some constant $C_{N, m}$  independent of $k$ such that the following holds on $(D_{\mathcal{B}}\cup S_{\mathcal{B}})\cap \pi^{-1}(V):$
	\begin{equation*}
		\sum_{k=0}^{\infty} \frac{k+1}{2^{n+1}\pi} \left| \mathcal{L}^{\alpha} \left((1-\rho)^k J_{k+1}\right)\right|\leq \frac{C_{N, m}}{2^{n+1}\pi} \sum_{k=0}^{\infty} \frac{1}{(k+1)^{N-n-m}}, \quad \forall |\alpha| \leq m.
	\end{equation*}
If we choose $N=m+n+2$, then the above converges uniformly on $(D_{\mathcal{B}}\cup S_{\mathcal{B}})\cap \pi^{-1}(V)$. Since $m$ is arbitrary, by \eqref{K on D and K on D_B difference}, for each differential operator $\mathcal{L}^{\alpha}$ with $|\alpha|\geq 0$, $\mathcal{L}^{\alpha} Q$ extends to a continuous function on $(D_B\cup S_{\mathcal{B}})\cap \pi^{-1}(V)$. Then by a classical extension theorem \cite{Se64} (see also \cite{Wh34} and \cite[Chapter VI, Theorem 5]{St70}), $Q$ and thus $K_D-K_{D_\mathcal{B}}$ extend $C^{\infty}$-smoothly across $S_{\mathcal{B}} \cap \pi^{-1}(V)$, in particular across $p$. \qed
	
Since $p\in S_{\mathcal{B}}$ is arbitrary, we arrive at the desired conclusion of Proposition \ref{Prop localize BK to small disk bundle}.
\end{proof}

We are now ready to prove Theorem \ref{Thm localization}.
\begin{proof}[Proof of Theorem $\ref{Thm localization}$]
	It suffices to show $K_D-K_{\Omega}$ extends smoothly across every $q\in W\cap \partial\Omega$. Fix any such point $q$. Choose a smaller smoothly bounded strongly pseudoconvex domain $\Omega_0\subset \Omega$ with $q\in \partial\Omega_0$ such that
	\begin{itemize}
		\item[(a)] $\Omega_0 \subset\subset \pi^{-1}(\mathcal{B})$, for some small Euclidean ball $\mathcal{B} \subset\subset U$ in some trivializing coordinate chart $(U, z)$ of $(M, g; L,h)$.
		\item[(b)] There exists some small neighborhood $W_0\subset W$ of $q$ in $L^*$ satisfying
		\begin{equation*}
			W_0\cap \Omega_0=W_0\cap\Omega=W_0\cap D.
		\end{equation*}
	\end{itemize}
Since $\Omega$ is relatively compact in $L^*$, by the localization of the Bergman kernels on strongly pseudoconvex domains (see \cite{Fe, BoSj}, and \cite[Proposition 3.1]{HuLi23}), the difference of the Bergman kernel forms $K_{\Omega}-K_{\Omega_0}$ extends across $W_0\cap \partial \Omega_0$. Therefore, to show $K_D-K_{\Omega}$ extends smoothly across $q\in W_0\cap \partial\Omega_0$, it suffices to prove the same holds for $K_D-K_{\Omega_0}$. For that, we will first need the following simple fact.

\bigskip
\textbf{Claim.} The domain $D_{\mathcal{B}}=D\cap \pi^{-1}(\mathcal{B})$ is pseudoconvex.
\begin{proof}[Proof of Claim]
Write $\mathcal{B}=\{|z-z_0|<R\}$ for some $z_0 \in U$ and $R>0$, in the trivializing coordinate chart $(U, z)$. Then in the local coordinates $(z, \xi)$,  we have $D_{\mathcal{B}}$ is defined by
\begin{equation*}
D_{\mathcal{B}}=\{(z, \xi): |z-z_0|<R, ~\rho(z, \xi):=1-|\xi|^2e^{\phi}>0 \};
\end{equation*}
and $\max\{-\log(R^2-|z-z_0|^2), -\log \rho \}$ is a plurisubharmonic exhaustion function on $D_{\mathcal{B}}$.
\end{proof}
	Now, note that $D_{\mathcal{B}}$ is a bounded pseudoconvex domain in $\mathbb{C}^{n+1}$ and $q$ is a smooth strongly pseudoconvex boundary point of $D_{\mathcal{B}}$.  We apply Theorem 4.2 in \cite{En01} to conclude that $K_{D_{\mathcal{B}}}- K_{\Omega_0}$ extends $C^{\infty}$-smoothly across $q$. Combining this with Proposition \ref{Prop localize BK to small disk bundle}, we see $K_D-K_{\Omega_0}=(K_D-K_{D_{\mathcal{B}}})+(K_{D_{\mathcal{B}}}-K_{\Omega_0})$ extends $C^{\infty}$-smoothly across $q$, as desired.
\end{proof}

We end this section with the proof of Corollary \ref{Cor Fefferman expansion}.
\begin{proof}[Proof of Corollary $\ref{Cor Fefferman expansion}$]
For any $p\in S$, pick a small smoothly bounded strongly pseudoconvex domain $\Omega\subset\subset L^*$,  such that $p\in \partial\Omega$ and the following conditions hold:
\begin{itemize}
	\item [(a)] $\Omega \subset\subset \pi^{-1}(U)$ for some trivializing coordinate chart $(U, z)$ of $(M, g; L,h)$.
	\item[(b)] There exists some small neighborhood $W_p \subset\subset \pi^{-1}(U)$ of $p$ in $L^*$ such that $W_p \cap \Omega=W_p \cap D$.
\end{itemize}
	
Recall that $K_D$ and $K_{\Omega}$ denote the Bergman kernel forms of $D$ and $\Omega$, respectively. By Theorem \ref{Thm localization}, the difference of the Bergman kernel forms $K_{D}-K_{\Omega}$ is a smooth $(n+1, n+1)$-form in a neighborhood of $W_p \cap \oo{\Omega}$. Thus, denoting by $T$ the canonical top form of $(L^*, h^*),$
\begin{equation}\label{eq difference H}
		\widehat{K}:=\frac{K_{D}-K_{\Omega}}{T}
\end{equation}
is a smooth function in a neighborhood of $W_p \cap \oo{\Omega}$.  Since $\Omega$ can be regarded as a bounded strongly pseudoconvex domain in a complex Euclidean space, by Fefferman \cite{Fe},
	\begin{equation}\label{eq K_Omega expansion}
		K_{\Omega}=\left(\frac{\phi_{\Omega}}{\rho_{\Omega}^{n+2}}+\psi_{\Omega}\log\rho_{\Omega}\right)T
	\end{equation}
for some smooth functions $\phi_{\Omega}$ and $\psi_{\Omega}$ in a neighborhood of $\oo{\Omega}$. Shrinking $W_p$ as needed, we can assume
	\begin{equation}\label{eq defining function modified}
		\rho_{\Omega}=\rho \quad \text{in } W_p.
	\end{equation}
Then equations \eqref{eq difference H}, \eqref{eq K_Omega expansion} and \eqref{eq defining function modified} yield
	\begin{equation}\label{eq K_D in a small neighborhood}
		K_{D}=\left(\frac{\phi_{\Omega}+ \widehat{K} \rho^{n+2}}{\rho^{n+2}}+\psi_{\Omega}\log\rho\right)T=:\left(\frac{\phi_p}{\rho^{n+2}}+\psi_p\log \rho\right)T \quad \text{in } W_p \cap \Omega.
	\end{equation}
It is clear that $\phi_p$ and $\psi_p$ are smooth functions on $W_p \cap \oo{\Omega}$. Note that all such open sets $\{W_p\}_{p\in S}$ form an open cover of $S$. We can then choose another open set $O$ such that $\oo{O}$ (the closure of $O$ in $L^*$) is contained in $D$,  and
	\begin{equation*}
		\oo{D} \subset \widetilde{W}:=O \cup \Bigl(\cup_{p\in S} W_p \Bigr).
	\end{equation*}
%\added{Maybe we want $\widetilde{M}$ to include not only the base manifold $M$ but the whole disk bundle $D$ since we want a partition of unity for the whole disk bundle $D$? $\oo{\widehat{W}}$ is the closure in $D\cup S$.}
Write $\widehat{K}_D:=\frac{K_D}{T}$, which is a smooth function on $D$. Note on $O$ we have
	\begin{equation*}
		K_{D}=\left( \frac{\widehat{K}_{D}\rho^{n+2}}{\rho^{n+2}}+0\cdot\log \rho\right) T=:\left( \frac{\widehat{\phi}}{\rho^{n+2}}+\widehat{\psi}\log \rho \right)T.
	\end{equation*}
	Choose a smooth partition of unity $\{h_p\}_{p\in S} \cup \{\widehat{h}\}$ on $\widetilde{W}$ subordinate to the open cover $\{W_p, O\}_{p\in S}$, and set
	
	\begin{equation*}
		\Phi=\sum_{p\in S} h_p\phi_p+\widehat{h}\widehat{\phi}, \qquad \Psi=\sum_{p\in S} h_p\psi_p+\widehat{h}\widehat{\psi}.
	\end{equation*}
	Then we have
	\begin{equation}\label{eq K_D expansion}
		K_{D}=\left( \frac{\Phi}{\rho^{n+2}}+\Psi\log \rho \right) T  \quad \text{on } D.
	\end{equation}
	It is also clear that $\Phi$ and $\Psi$ are smooth functions on $\oo{D}$. This proves the first conclusion of Corollary \ref{Cor Fefferman expansion}.  Next we prove the uniqueness part of the choice of $\Phi$ and $\Psi$. Suppose
	\begin{equation*}
		K_{D}=\frac{\Phi}{\rho^{n+2}}+\Psi\log\rho=\frac{\widehat{\Phi}}{\rho^{n+2}}+\widehat{\Psi}\log\rho \quad \text{on } D.
	\end{equation*}
Here $\widehat{\Phi}$ and $\widehat{\Psi}$ are also smooth functions on $\oo{D}$. Then we have
	\begin{equation*}
		\Phi-\widehat{\Phi}+\bigl(\Psi-\widehat{\Psi}\bigr)\rho^{n+1}\log\rho=0 \quad \text{on } D.
	\end{equation*}
Applying \cite[Lemma 2.2]{FuWo} to the above equation along all smooth curves in $D$ intersecting transversally with $S$,
we see that $\Phi-\widehat{\Phi}$ and $\Psi-\widehat{\Psi}$ must vanish to infinite order along $S$.

To prove the last assertion of  Corollary \ref{Cor Fefferman expansion}, fix any $p\in \Sigma$. As above, choose a small neighborhood $W=W_p$ of $p$ in $L^*$, and a small, smoothly bounded, strongly pseudoconvex domain $\Omega \subset\subset L^*$ with $p\in \partial\Omega,$ such that
	\begin{equation*}
		W\cap \Omega= W\cap D, \qquad W\cap \partial\Omega=W\cap S \subset \Sigma.
	\end{equation*}
	Again, write $K_{\Omega}$ for the Bergman kernel of $\Omega$. By Theorem \ref{Thm localization}, we can assume \eqref{eq difference H}--\eqref{eq K_D in a small neighborhood} hold. By \eqref{eq K_D in a small neighborhood} and \eqref{eq K_D expansion}, the following holds in $W\cap D$:
	\begin{equation*}
		\bigl(\Phi-\phi_{\Omega}-\widehat{K}\rho^{n+2}\bigr)+\bigl(\Psi-\psi_{\Omega}\bigr)\rho^{n+2}\log\rho=0.
	\end{equation*}
By the above equation and \cite[Lemma 2.2]{FuWo}, $\Psi$ vanishes to infinite order along $\Sigma$ near $p$ if and only if $\psi_{\Omega}$ also vanishes to infinite order along  $\Sigma$ near $p$. The later, by definition, is equivalent to that $\Sigma$ is Bergman logarithmically flat at $p$. Since $p$ is arbitrary, we conclude
$\Psi$ vanishes to infinite order along  $\Sigma$ if and only if  $\Sigma$ is Bergman logarithmically flat. Hence we arrive at the desired conclusion in Corollary \ref{Cor Fefferman expansion}.
\end{proof}

\section{Proof of Theorem \ref{Thm the coeffiecients relation}}\label{Sec 3}

In this section, we prove Theorem \ref{Thm the coeffiecients relation}. The proof is inspired by \cite{EnZh} and \cite{Ca99}.
\begin{proof}[Proof of Theorem $\ref{Thm the coeffiecients relation}$]
First by Theorem \ref{Thm relation between two BKs}, we have
	\begin{equation}\label{relation between two BKs 2}
		K_D=\Bigl(\sum_{k=0}^{\infty} \frac{k+1}{2^{n+1}\pi} (1-\rho)^k B_{k+1}\Bigr)T.
	\end{equation}
	Here $B_{k+1}=\bigl|K_{k+1}\bigr|_{L^{k+1}\otimes \mathcal{C}_M}$ is the Bergman kernel function for the line bundle $L^{k+1}\otimes \mathcal{C}_M$. By Proposition \ref{Prop TYZ expansion} we have
	\begin{equation*}
		B_k(z)\sim \left(\frac{k}{\pi}\right)^n\sum_{j=0}^{\infty} \frac{a_j(z)}{k^j} \quad \mbox{ locally uniformly on } M,
	\end{equation*}
where the meaning of the above asymptotic expansion is explained in Remark \ref{rmk asymptotic expansion meaning}.
To make use of this expansion, we introduce the following definition and proposition.
	\begin{defn}\label{defn  O function}{\rm
		Let $F$ be a function on $\mathbb{Z}^{+}\times M$, smooth in the second variable. Let $q$ be a nonnegative integer. We say $F\in O_M(q)$ if, for any $m\geq 1$ and any compact subset $X\subset M$, there exists a constant $C>0$, independent of $k$, such that
		\begin{equation*}
			\bigl\| F(k, \cdot) \bigr\|_{C^m(X)}\leq \frac{C}{k^q}, \quad \forall k\geq 1.
		\end{equation*}
}
\end{defn}
Observe that the above inequality implies that, for any smooth differential operator $P$ of order $\leq m$ on $M$, there exists a constant $C$, depending on $P$ but independent of $k$, such that
		\begin{equation*}
			\bigl| P \left(F(k, \cdot) \right) \bigr|\leq \frac{C}{k^q} \text{ on } X, \quad \forall k\geq 1.
		\end{equation*}
It is clear that if $F\in O_M(p)$ and $p>q \geq 0$, then $F\in O_M(q)$. In the following,  as before, we denote by $\oo{D}=D \cup S$.

\begin{prop}\label{Prop sum of errors} The following hold:
\begin{itemize}
			\item[(1)] If $F\in O_M(p)$ for some $p\geq 1$, then $(k+1)F\in O_M(p-1)$.
			\item[(2)] If $H\in O_M(p)$ for some $p\geq 2$, then $\sum_{k=0}^{\infty} (1-\rho)^k H(k, \cdot)\in C^{p-2}(\oo{D})$.
\end{itemize}
	\end{prop}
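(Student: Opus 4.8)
\emph{Proof proposal.}

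Part (1) is immediate from the definition: if $F \in O_M(p)$ with $p \ge 1$, then for every $m \ge 1$ and every compact $X \subset M$ there is $C > 0$, independent of $k$, with $\|F(k,\cdot)\|_{C^m(X)} \le C k^{-p}$; since $k+1 \le 2k$ for $k \ge 1$ we get $\|(k+1)F(k,\cdot)\|_{C^m(X)} \le 2C\, k^{-(p-1)}$, so $(k+1)F \in O_M(p-1)$.

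For part (2) the plan is to apply the Weierstrass $M$-test to the series and to each of its partial derivatives of order $\le p-2$, working locally. Fix a trivializing coordinate chart $(U,z)$ of $(M,g;L,h)$ with induced fibre coordinate $\xi$ on $L^*$ over $U$, so that $1-\rho = |\xi|^2 e^{\phi}$ is smooth on $\pi^{-1}(U)$ and $0 \le 1-\rho \le 1$ on $\oo{D}$. For a compact $X \subset U$ the set $\oo{D}_X := \oo{D}\cap \pi^{-1}(X)$ is compact (its fibre over $z\in X$ is the closed disk $|\xi|^2\le e^{-\phi(z)}$, with $\phi$ bounded on $X$), and since membership in $C^{p-2}(\oo{D})$ is a local condition tested on such sets for charts covering $M$, it suffices to show that $\sum_{k\ge 0}(1-\rho)^k H(k,\cdot)$ together with all its derivatives of order $\le p-2$ in $z,\oo{z},\xi,\oo{\xi}$ converge uniformly on each $\oo{D}_X$.

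The heart of the matter is estimating derivatives of $(1-\rho)^k$. For a multi-index $\beta$ with $|\beta| = \ell'$, repeated use of the product rule (Fa\`a di Bruno) writes $\mathcal{L}^{\beta}\big((1-\rho)^k\big)$ as a finite sum of terms $c\, k(k-1)\cdots(k-r+1)\,(1-\rho)^{k-r}\, w$, with $1 \le r \le \ell'$ and $w$ a product of derivatives of $1-\rho$ of total order $\ell'$; the falling factorial vanishes when $r > k$, so only nonnegative powers of $1-\rho$ occur with nonzero coefficient, and using $0\le 1-\rho\le 1$ together with the uniform (in $k$) bounds on the finitely many $w$'s on $\oo{D}_X$ we obtain $\big|\mathcal{L}^{\beta}((1-\rho)^k)\big| \le C_{\beta}\, k^{\ell'}$ on $\oo{D}_X$. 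Now take $\mathcal{L}^{\alpha}$ of order $\ell = |\alpha| \le p-2$ and expand by Leibniz: $\mathcal{L}^{\alpha}\big((1-\rho)^k H(k,\cdot)\big) = \sum_{\beta\le\alpha}\binom{\alpha}{\beta}\mathcal{L}^{\beta}((1-\rho)^k)\,\mathcal{L}^{\alpha-\beta}H(k,\cdot)$. Since $H\circ\pi$ is independent of $\xi,\oo{\xi}$, the factor $\mathcal{L}^{\alpha-\beta}H(k,\cdot)$ vanishes unless $\alpha-\beta$ involves only $z,\oo{z}$, in which case it is the pullback of a differential operator of order $\le p-2$ applied to $H(k,\cdot)$, so $|\mathcal{L}^{\alpha-\beta}H(k,\cdot)| \le C\, k^{-p}$ on $\oo{D}_X$ by $H\in O_M(p)$. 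Hence $\big|\mathcal{L}^{\alpha}((1-\rho)^k H(k,\cdot))\big| \le C'\, k^{-(p-\ell)}$ with $p - \ell \ge 2$, and the $M$-test gives uniform convergence of $\sum_k \mathcal{L}^{\alpha}((1-\rho)^k H(k,\cdot))$ on $\oo{D}_X$ for every such $\alpha$ (in particular for $\alpha = 0$). Since the partial sums are smooth and all their derivatives up to order $p-2$ converge uniformly, the standard term-by-term differentiation theorem shows $\sum_{k\ge 0}(1-\rho)^k H(k,\cdot) \in C^{p-2}(\oo{D}_X)$; letting $X$ run over compacta in trivializing charts covering $M$ finishes part (2). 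The only step demanding genuine care is the combinatorial bookkeeping for $\mathcal{L}^{\beta}((1-\rho)^k)$ — confirming that $\ell'$-fold differentiation produces growth at most $k^{\ell'}$ while keeping a nonnegative power of $1-\rho$; the two-power gap between the decay $k^{-p}$ from $O_M(p)$ and the worst growth $k^{\ell}$, $\ell\le p-2$, is precisely what delivers $C^{p-2}$ regularity.
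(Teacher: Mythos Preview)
Your proof is correct and follows essentially the same route as the paper: both reduce to a local trivializing chart, bound $\bigl|\mathcal{L}^{\alpha}\bigl((1-\rho)^k H(k,\cdot)\bigr)\bigr|$ by $C k^{|\alpha|}/k^{p}\le C/k^{2}$ for $|\alpha|\le p-2$ using $0\le 1-\rho\le 1$ on $\oo{D}$, and conclude by uniform convergence of the differentiated series. The only cosmetic difference is that the paper states the key derivative bound in one line (without spelling out the Leibniz/Fa\`a di Bruno bookkeeping you provide) and then, following the pattern of its earlier Proposition on localization, appeals to a classical extension theorem (Seeley/Whitney) to pass from uniform convergence of the $\mathcal{L}^{\alpha}$-series to $C^{p-2}$ regularity on $\oo{D}$; you instead observe directly that each summand $(1-\rho)^k H(k,\cdot)$ is already smooth on an open neighborhood of $\oo{D}\cap\pi^{-1}(X)$, so the standard term-by-term differentiation theorem suffices. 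Both conclusions are valid and the underlying estimate is identical.
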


\begin{proof}
Since $(1)$ is trivial, we only prove $(2)$. Pick any trivializing coordinate chart $(U, z)$ of $(M, g; L,h)$, which induces a coordinate system $(z, \xi)$ of $L^*$ over $U$. Let $\alpha=(\alpha_1, \alpha_2, \alpha_3, \alpha_4) \in \mathbb{N}^n \times \mathbb{N}^n \times \mathbb{N} \times \mathbb{N}$ be a multi-index with $|\alpha| \leq p-2$. Define a differential operator $\mathcal{L}^{\alpha}$ on $\pi^{-1}(U)$ by
$$
\mathcal{L}^{\alpha}=\frac{\partial^{|\alpha|}}{\partial z^{\alpha_1} \partial \oo{z}^{\alpha_2} \partial \xi^{\alpha_3} \partial \oo{\xi}^{\alpha_4}},
$$
in the local coordinate system $(z, \xi)$ of $L^*$ over $U$. Fix any open subset $V  \subset\subset U.$ Since  $0\leq 1-\rho\leq 1$ on $\oo{D}$ and $H$ is in $O_M(p)$, %for any smooth differential operator $P(D)$ on $M$ of order $m\leq p-2$, and any compact subset $X\subset M$,
there exists some constant $C$, independent of $k$, such that the following holds on $\oo{D} \cap \pi^{-1}(V):$
	\begin{equation*}
		\left|\mathcal{L}^{\alpha} \left((1-\rho)^k H(k, \cdot) \right)\right| \leq \frac{Ck^{|\alpha|}}{k^p}\leq \frac{C}{k^2}, \quad \forall |\alpha| \leq p-2.
	\end{equation*}
	Thus $\sum_{k=0}^{\infty} \mathcal{L}^{\alpha} \left((1-\rho)^k H(k, \cdot)\right)$ converges absolutely and uniformly on $\oo{D} \cap \pi^{-1}(V)$. Then by using the same argument as in the proof of Proposition \ref{Prop localize BK to small disk bundle} and a classical extension theorem \cite{Se64} (see  also  \cite{Wh34} and \cite[Chapter VI, Theorem 5]{St70}), we arrive at the desired conclusion.
\end{proof}

Let $N\geq 2$ and write $B_k^{(N)}:=(\frac{k}{\pi})^n\sum_{j=0}^{N+n}\frac{a_j(z)}{k^j}$.  By Remark \ref{rmk asymptotic expansion meaning}, we have
\begin{equation*}
	R_k^{(N)}:=B_k-B_k^{(N)}\in O_M(N+1),
\end{equation*}
for each $N\geq 0$. It is clear that we also have $R_{k+1}^{(N)}\in O_M(N+1)$. By part (1) of Proposition \ref{Prop sum of errors}, $(k+1) R_{k+1}^{(N)}\in O_M(N)$. By part (2) of Proposition \ref{Prop sum of errors}, we have
\begin{equation}\label{error term in K_D}
	R^{(N)}:=\sum_{k=0}^{\infty} \frac{k+1}{2^{n+1}\pi} (1-\rho)^k R_{k+1}^{(N)}\in C^{N-2}(\oo{D}).
\end{equation}
 By \eqref{relation between two BKs 2} and the definitions of $R_{k+1}^{(N)}$ and $R^{(N)}$, we deduce
\begin{align}\label{K_D in terms of main term and error term}
	\begin{split}
		K_D=&\Bigl(\sum_{k=0}^{\infty} \frac{k+1}{2^{n+1}\pi} (1-\rho)^k B_{k+1}^{(N)}+R^{(N)}\Bigr)T\\
		=&\left[ \sum_{k=0}^{\infty} \left( \frac{(k+1)^{n+1}}{(2\pi)^{n+1}} (1-\rho)^k \sum_{j=0}^{N+n}\frac{a_j(z)}{(k+1)^j} \right)+R^{(N)}\right]T=: \bigl[B^{(N)}+R^{(N)}\bigr] T.
	\end{split}
\end{align}
We then split the sum $\sum_{j=0}^{N+n}$ in $B^{(N)}$ into two parts $\sum_{j=0}^{n+1}$ and $\sum_{j=n+2}^{N+n}$, and obtain
\begin{align}\label{eqn bsn}
	B^{(N)}=\sum_{k=0}^{\infty} \left( \frac{(1-\rho)^k}{(2\pi)^{n+1}}\sum_{j=0}^{n+1} a_j (k+1)^{n+1-j} \right)+\sum_{k=0}^{\infty} \left( \frac{(1-\rho)^k}{(2\pi)^{n+1}} \sum_{j=n+2}^{N+n} \frac{a_j}{(k+1)^{j-n-1}} \right)=:(\mathrm{I})+(\mathrm{II}).
\end{align}
To deal with the terms (I) and (II),  we will need the following calculus fact, which was proved and used in \cite{Ca99} (see pp.\ 14-15 there). %\added{and the proof can be found there??? The first one seems elementary; the second is proved after equation (2.11) there.}
\begin{lemma}\label{Lem Taylor expansion}
	For $i\geq 0$, we have
	\begin{equation*}
		\frac{1}{(1-r)^{i+1}}=\sum_{l=0}^{\infty} \binom{l+i}{l} r^l, \quad \text{when } |r|<1.
	\end{equation*}
There exists a unique family of real numbers $\{Q_{jk}\}_{j, k=0}^{\infty}$ such that, for every $j\geq 0$, we have
	\begin{equation*}
		(1-r)^j\log(1-r)=\sum_{k=0}^{\infty} Q_{jk} r^k, \quad \text{when } |r|<1.
	\end{equation*}
In particular, $Q_{j0}=0$ for all $j;$ and  $Q_{jk}=\frac{(-1)^{j+1}j!}{k(k-1)\cdots (k-j)}$ for all $k>j\geq 0$.
%Here, by convention, $0!=1$.
\end{lemma}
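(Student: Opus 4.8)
The plan is to treat the two assertions of Lemma~\ref{Lem Taylor expansion} separately, both being elementary facts about power series convergent on the unit disk.

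For the first identity I would simply invoke the (negative) binomial series: starting from the geometric series $\frac{1}{1-r}=\sum_{l\geq 0}r^l$, valid for $|r|<1$, and differentiating $i$ times term by term — legitimate strictly inside the radius of convergence — one gets $\frac{i!}{(1-r)^{i+1}}=\sum_{l\geq i}\frac{l!}{(l-i)!}r^{l-i}$, which after reindexing $l\mapsto l+i$ equals $i!\sum_{l\geq 0}\binom{l+i}{l}r^l$; dividing by $i!$ gives the claim. For the second assertion, existence and uniqueness of the $Q_{jk}$ are immediate: with the principal branch of the logarithm, $r\mapsto(1-r)^j\log(1-r)$ is holomorphic on $\{|r|<1\}$, hence represented there by a unique power series, and since the coefficients of both $(1-r)^j$ and $\log(1-r)=-\sum_{m\geq 1}r^m/m$ are real, so are the $Q_{jk}$. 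Evaluating at $r=0$ gives $Q_{j0}=(1-0)^j\log(1-0)=0$.

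To obtain the closed formula for $Q_{jk}$ when $k>j$, I would multiply the two power series: writing $(1-r)^j=\sum_{l=0}^{j}\binom{j}{l}(-1)^l r^l$ and $\log(1-r)=-\sum_{m\geq 1}r^m/m$ and collecting the coefficient of $r^k$, one finds, for $k>j$,
\[
Q_{jk}=-\sum_{l=0}^{j}\binom{j}{l}\frac{(-1)^l}{k-l},
\]
the restriction $k>j$ being exactly what makes the inner sum run over the full range $0\leq l\leq j$ with every denominator $k-l$ a positive integer. It then remains to recognize the right-hand side as a partial-fraction expansion: the rational function $x\mapsto\frac{j!}{x(x-1)\cdots(x-j)}$ has simple poles at $x=0,1,\dots,j$ with residue at $x=l$ equal to $\frac{j!}{\prod_{m\neq l}(l-m)}=\binom{j}{l}(-1)^{j-l}$, so it equals $\sum_{l=0}^{j}\binom{j}{l}(-1)^{j-l}\frac{1}{x-l}$; evaluating at $x=k$ and multiplying by $(-1)^{j+1}$ turns this into $\frac{(-1)^{j+1}j!}{k(k-1)\cdots(k-j)}=-\sum_{l=0}^{j}\binom{j}{l}(-1)^{l}\frac{1}{k-l}=Q_{jk}$, as desired. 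An alternative route avoiding the residue computation is induction on $j$: since $f_j(r):=(1-r)^j\log(1-r)$ satisfies $f_{j+1}=(1-r)f_j$, the coefficients obey $Q_{j+1,k}=Q_{jk}-Q_{j,k-1}$, with base case $f_0=\log(1-r)$, i.e.\ $Q_{0k}=-1/k$; one checks by a short algebraic manipulation that the claimed formula is preserved under this recursion for $k>j+1$.

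There is no genuine obstacle here — the lemma is purely a matter of elementary power-series bookkeeping. The only points requiring a little care are that the inner summation in the coefficient extraction truncates at $\min(j,k-1)$ in general, and simplifies to $0\leq l\leq j$ precisely because one restricts to $k>j$; that the residues and signs in the partial-fraction step be computed correctly; and that, in the inductive route, the recursion may be combined with the induction hypothesis only when $k>j+1$, since for $k=j+1$ the closed formula has a vanishing factor in its denominator and must be addressed (trivially, via the recursion from $Q_{j,j}$ and $Q_{j,j+1}$) on its own.
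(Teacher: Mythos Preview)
Your argument is correct in every detail: the negative binomial series via termwise differentiation, the holomorphicity/uniqueness observation for the $Q_{jk}$, the Cauchy-product computation of $Q_{jk}$ for $k>j$, and the partial-fraction identification (your residue and sign bookkeeping checks out). The inductive alternative you sketch is also sound.

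As for comparison with the paper: there is nothing to compare. The paper does not prove Lemma~\ref{Lem Taylor expansion} at all; it simply states it as ``a calculus fact, which was proved and used in \cite{Ca99} (see pp.\ 14--15 there)'' and moves on. So you have supplied a complete, self-contained proof where the authors defer to a reference. Your partial-fraction approach is exactly the sort of thing one finds in Catlin's appendix, and in any case the lemma is elementary enough that any correct derivation is as good as another.
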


\bigskip

We will treat terms (I) and (II) in the following Step 1 and Step 2 separately.

\medskip

\textbf{Step 1.} Reduction of (I).

Let us write
\begin{equation*}
	\binom{x+i}{i}=\frac{(x+i)\cdots (x+1)}{i!} \quad \text{ for a real variable $x$ and } i\in \mathbb{Z}^{+},
\end{equation*}
and by convention, $\binom{x}{0}=1.$ Note $\{(x+1)^{n+1-j}\}_{j=0}^{n+1}$ and $\{\binom{x+i}{i}\}_{i=0}^{n+1}$ both form a basis for the linear space of polynomials in $x$ of degree at most $n+1$. Let $\{\lambda_{ji}\}_{0\leq j, i\leq n+1}$ be the set of real numbers such that
\begin{equation*}
	(x+1)^{n+1-j}=\sum_{i=0}^{n+1}\lambda_{ji} \binom{x+i}{i}, \quad \forall\, 0\leq j\leq n+1.
\end{equation*}
Letting $x=k\geq 1$, we get
\begin{equation*}
	(k+1)^{n+1-j}=\sum_{i=0}^{n+1}\lambda_{ji} \binom{k+i}{k}.
\end{equation*}
Therefore,
\begin{align*}
	(\mathrm{I})=\sum_{k=0}^{\infty} \left( \frac{(1-\rho)^k}{(2\pi)^{n+1}}\sum_{j=0}^{n+1}a_j\sum_{i=0}^{n+1}\lambda_{ji}\binom{k+i}{k} \right)
	=\sum_{i=0}^{n+1}\Bigl(\sum_{j=0}^{n+1} \frac{\lambda_{ji}a_j}{(2\pi)^{n+1}} \Bigr)\sum_{k=0}^{\infty} \binom{k+i}{k}(1-\rho)^k.
\end{align*}
To simplify the above, write
\begin{equation}\label{relation between A and a}
	\alpha_{n+1-i}=\sum_{j=0}^{n+1} \frac{\lambda_{ji}a_j}{(2\pi)^{n+1}} \quad\text{for } 0\leq i\leq n+1.
\end{equation}
Since  $0\leq 1-\rho<1$ on $D$, we apply Lemma \ref{Lem Taylor expansion} with $r=1-\rho$ to get
\begin{equation}\label{eqn term I}
	(\mathrm{I})=\sum_{i=0}^{n+1}\alpha_{n+1-i}\frac{1}{\rho^{i+1}}=\rho^{-(n+2)}\sum_{j=0}^{n+1}\alpha_j\rho^j.
\end{equation}
This finishes the reduction of (I).
%\added{Maybe we want to explain that \eqref{relation between A and a} is equivalent to \eqref{def of A}?}

\textbf{Step 2.} Reduction of (II).

Let $\{Q_{jk}\}_{j,k=0}^{\infty}$ be as in Lemma \ref{Lem Taylor expansion}. %Recall that for any $j\geq 1$ and $k>j$ we have
%\begin{equation*}
	%Q_{jk}=\frac{(-1)^{j+1}j!}{k(k-1)\cdots(k-j)}=\frac{(-1)^{j+1}j!}{\prod_{l=1}^{j+1}(k+1-l)}.
%\end{equation*}
We first prove the following proposition.
\begin{prop}\label{Prop write 1/k in terms of Q}
	For each $i\geq 0$, there exist constants $\{\tau_{ij}\}_{j=i}^{\infty}$ such that for any $N\geq i+2$ we have
	\begin{equation}\label{1/k in terms of Q}
		P_{N, k, i}:=\frac{1}{(k+1)^{i+1}}-\sum_{j=i}^{N-2}\tau_{ij}Q_{jk}=O\bigg(\frac{1}{(k+1)^N}\bigg), \quad \text{as } k\rightarrow \infty.
	\end{equation}
	Moreover, it holds that $\tau_{ii}=\frac{(-1)^{i+1}}{i!}$.
\end{prop}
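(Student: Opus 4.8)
The plan is to prove this by a triangular-elimination argument on the asymptotic expansions in $1/k$, using the explicit formula for $Q_{jk}$ from Lemma~\ref{Lem Taylor expansion}. Fix $i$ and restrict to $k>N$, so that for every index $j\le N-2$ occurring below we have $k>j$ and hence $Q_{jk}=\dfrac{(-1)^{j+1}j!}{k(k-1)\cdots(k-j)}$. Expanding the denominator gives $Q_{jk}=(-1)^{j+1}j!\,k^{-(j+1)}\bigl(1+O(1/k)\bigr)$, so each $Q_{jk}$ has an asymptotic expansion in nonnegative integer powers of $1/k$ whose leading term is of order $k^{-(j+1)}$ with \emph{nonzero} coefficient $(-1)^{j+1}j!$. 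The function $\frac{1}{(k+1)^{i+1}}$ likewise expands as $k^{-(i+1)}+O(k^{-(i+2)})$.

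I would then construct the $\tau_{ij}$ by induction on $M\ge i$, establishing that one can choose constants $\tau_{ii},\dots,\tau_{iM}$ with
\[
R_M(k):=\frac{1}{(k+1)^{i+1}}-\sum_{j=i}^{M}\tau_{ij}Q_{jk}=O\bigl(k^{-(M+2)}\bigr).
\]
For the base case $M=i$, matching the $k^{-(i+1)}$ coefficients forces $\tau_{ii}(-1)^{i+1}i!=1$, i.e.\ $\tau_{ii}=\frac{(-1)^{i+1}}{i!}$, after which $R_i(k)=O(k^{-(i+2)})$. For the inductive step, note that $R_M$ is a finite linear combination of functions each admitting an asymptotic expansion in powers of $1/k$, so $R_M$ has one too; combined with $R_M=O(k^{-(M+2)})$ this yields $R_M(k)=c\,k^{-(M+2)}+O(k^{-(M+3)})$ for a (possibly zero) constant $c$. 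Since $Q_{M+1,k}=(-1)^{M+2}(M+1)!\,k^{-(M+2)}+O(k^{-(M+3)})$ with nonvanishing leading coefficient, setting $\tau_{i,M+1}:=\dfrac{c}{(-1)^{M+2}(M+1)!}$ cancels the $k^{-(M+2)}$ term and gives $R_{M+1}(k)=O(k^{-(M+3)})$, completing the induction. Taking $M=N-2$ yields $P_{N,k,i}=R_{N-2}(k)=O(k^{-N})$; since $\tfrac{1}{2}\le \tfrac{k}{k+1}\le 1$ for $k\ge 1$, this is the same as $O\bigl((k+1)^{-N}\bigr)$. Each $\tau_{ij}$ is fixed once and for all at the step $M=j$, independently of $N$, so the single family $\{\tau_{ij}\}_{j\ge i}$ works simultaneously for all $N\ge i+2$.

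The only delicate point is the bookkeeping of the error terms: one must check that every $R_M$ genuinely possesses an honest asymptotic expansion in nonnegative integer powers of $1/k$ (so that its leading coefficient $c$ is well defined) and that the $O$-estimates behave additively, but this is routine once the closed form for $Q_{jk}$ is in hand. The regime $k\le j$, where that closed form fails, plays no role since the statement concerns only $k\to\infty$; alternatively one could sidestep the expansions entirely by clearing denominators over $(k+1)k(k-1)\cdots(k-N+2)$ and comparing the resulting polynomials in $k$. I also expect it is worth remarking that the $\tau_{ij}$ produced by this procedure are uniquely determined, though uniqueness is not needed in the sequel. I anticipate this coefficient-matching bookkeeping to be the main, albeit mild, obstacle.
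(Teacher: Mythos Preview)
Your proposal is correct and follows essentially the same approach as the paper: both arguments use the explicit formula $Q_{jk}=\frac{(-1)^{j+1}j!}{k(k-1)\cdots(k-j)}$ from Lemma~\ref{Lem Taylor expansion} to observe that $Q_{jk}$ has a nonzero leading term of order $(j+1)$, and then perform the obvious inductive triangular elimination. The only cosmetic difference is that the paper expands in powers of $1/(k+1)$ (so that $1/(k+1)^{i+1}$ is itself a pure power and no separate expansion is needed), whereas you expand in powers of $1/k$; your proposal is in fact more detailed than the paper's, which dispatches the induction in a single sentence.
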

\begin{proof}
%For two functions $A(k)$ and $B(k)$ in $k$, we say $A(k) \sim B(k)$ if $\lim_{k \to \infty} \frac{A(k)}{B(k)}=1.$
Note that, by Lemma \ref{Lem Taylor expansion}, for each fixed $j$, we have
$$
Q_{jk}=\frac{(-1)^{j+1}j!}{(k+1)^{j+1}}+O\bigg(\frac{1}{(k+1)^{i+2}}\bigg), \quad \text{as}~k\rightarrow \infty.
$$
Thus by picking $\tau_{ii}=\frac{(-1)^{i+1}}{i!}$, we get
	\begin{equation*}
		\frac{1}{(k+1)^{i+1}}-\tau_{ii}Q_{ik}=O\bigg(\frac{1}{(k+1)^{i+2}}\bigg), \quad \text{as}~k\rightarrow \infty.
	\end{equation*}
A simple inductive argument shows the existence of the $\tau_{ij}$ for \eqref{1/k in terms of Q} to hold, and their uniqueness is also clear.
\end{proof}
Recall
\begin{align*}
	(\mathrm{II})=\sum_{k=0}^{\infty} \left(  \frac{(1-\rho)^k}{(2\pi)^{n+1}} \sum_{j=n+2}^{N+n}\frac{a_j}{(k+1)^{j-n-1}} \right)=\sum_{k=0}^{\infty} \left( \frac{(1-\rho)^k}{(2\pi)^{n+1}}\sum_{i=0}^{N-2}\frac{a_{n+2+i}}{(k+1)^{i+1}} \right).
\end{align*}
We use \eqref{1/k in terms of Q} to reduce the above to
\begin{align*}
	(\mathrm{II})=&\sum_{k=0}^{\infty} \left( \frac{(1-\rho)^k}{(2\pi)^{n+1}}\sum_{i=0}^{N-2} a_{n+2+i}\Bigl(P_{N, k, i} +\sum_{j=i}^{N-2}\tau_{ij}Q_{jk} \Bigr)  \right)\\
	=& \sum_{k=0}^{\infty} \left( \frac{(1-\rho)^k}{(2\pi)^{n+1}}\sum_{i=0}^{N-2} a_{n+2+i}P_{N, k, i} \right) +\sum_{k=0}^{\infty} \left( \frac{(1-\rho)^k}{(2\pi)^{n+1}}\sum_{i=0}^{N-2} \sum_{j=i}^{N-2}a_{n+2+i}\tau_{ij}Q_{jk} \right) =:(\mathrm{II}_a)+(\mathrm{II}_b).
\end{align*}
We first consider the term $(\mathrm{II}_a)$. Note $a_{n+2+i}$ is smooth on $M$, and by Proposition \ref{Prop write 1/k in terms of Q}, the constant $P_{N, k, i}\in O(\frac{1}{k^N})$. It then follows directly from Definition \ref{defn  O function} that
\begin{equation*}
	\sum_{i=0}^{N-2}a_{n+2+i}P_{N, k, i}\in O_M(N).
\end{equation*}
This yields $(\mathrm{II}_a)\in C^{N-2}(\oo{D})$, by part (2) of Proposition \ref{Prop sum of errors}. Next we consider term $(\mathrm{II}_b)$. On $D$, we have by the absolute convergence,
\begin{align*}
	(\mathrm{II}_b)=&\sum_{k=0}^{\infty} \left( \frac{(1-\rho)^k}{(2\pi)^{n+1}}\sum_{i=0}^{N-2}\sum_{j=i}^{N-2}a_{n+2+i}\tau_{ij}Q_{jk} \right)\\
	=&\sum_{i=0}^{N-2}\sum_{j=i}^{N-2} \left( \frac{a_{n+2+i}\tau_{ij}}{(2\pi)^{n+1}} \sum_{k=0}^{\infty} Q_{jk}(1-\rho)^k \right)=\sum_{j=0}^{N-2}\sum_{i=0}^{j} \left( \frac{a_{n+2+i}\tau_{ij}}{(2\pi)^{n+1}} \sum_{k=0}^{\infty} Q_{jk}(1-\rho)^k \right).
\end{align*}
Since  $0\leq 1-\rho<1$ on $D$, we apply Lemma \ref{Lem Taylor expansion} with $r=1-\rho$ to obtain
\begin{equation}\label{eqn iia f2}
	(\mathrm{II}_b)=\sum_{j=0}^{N-2}\sum_{i=0}^{j} \left(\frac{a_{n+2+i}\tau_{ij}}{(2\pi)^{n+1}} \rho^j\log\rho \right)=:\bigg( \sum_{j=0}^{N-2}\beta_j\rho^j \bigg) \log\rho,
\end{equation}
where
\begin{equation}\label{relation between B and a}
	\beta_j=\sum_{i=0}^j\frac{a_{n+2+i} \tau_{ij}}{(2\pi)^{n+1}} \quad \text{for } j\geq 0.
\end{equation}
To conclude Step 2,  we recall $(\mathrm{II})=(\mathrm{II}_a)+(\mathrm{II}_b)$. Then by \eqref{eqn iia f2} and the fact that $(\mathrm{II}_a)\in C^{N-2}(\oo{D})$,
\begin{equation}\label{eqn term II}
	(\mathrm{II})-\bigg(\sum_{j=0}^{N-2}\beta_j\rho^j \bigg) \log\rho\in C^{N-2}(\oo{D}).
\end{equation}

\bigskip

%\textbf{Step 3.} Combination of Step 1 and Step 2.

Recall by \eqref{eqn bsn}, $B^{(N)}=(\mathrm{I}) + (\mathrm{II}).$ Combining this with \eqref{eqn term I} and \eqref{eqn term II} in Step 1 and Step 2,   we obtain
\begin{equation*}
	B^{(N)}-\rho^{-(n+2)}\sum_{j=0}^{n+1}\alpha_j\rho^j-\bigg(\sum_{j=0}^{N-2}\beta_j\rho^j\bigg)\log\rho\in C^{N-2}(\oo{D}).
\end{equation*}
In view of the above, and \eqref{K_D in terms of main term and error term} and \eqref{error term in K_D}, we have
\begin{equation}\label{K_D expansion up to order N-2}
	K_D-\Biggl(\rho^{-(n+2)}\sum_{j=0}^{n+1}\alpha_j\rho^j+\Bigl(\sum_{j=0}^{N-2}\beta_j\rho^j\Bigr)\log\rho\Biggr)T\in C^{N-2}_{(n+1, n+1)}(\oo{D}).
\end{equation}
Here $C^{N-2}_{(n+1, n+1)}(\oo{D})$ denotes the spaces of $(n+1,n+1)$-forms on $\oo{D}$ of class $C^{N-2}.$
Recall by Corollary \ref{Cor Fefferman expansion},
\begin{equation}\label{Fefferman expansion for the disk bundle 2}
	K_D=\Bigl(\frac{\Phi}{\rho^{n+2}}+\Psi\log\rho\Bigr)T \quad \text{ on } D,
\end{equation}
for some functions $\Phi, \Psi \in C^{\infty} (\oo{D})$. We let
\begin{equation}\label{Phihat and Psihat def}
	\widehat{\Phi}:=\Phi-\sum_{j=0}^{n+1}\alpha_j\rho^j \in C^{\infty} (\oo{D}), \qquad \widehat{\Psi}:=\Psi-\sum_{j=0}^{N-2}\beta_j\rho^j \in C^{\infty} (\oo{D}).
\end{equation}
Combining \eqref{K_D expansion up to order N-2} and \eqref{Fefferman expansion for the disk bundle 2}, we obtain
\begin{equation*}
	f:=\frac{\widehat{\Phi}}{\rho^{n+2}}+\widehat{\Psi}\log\rho\in C^{N-2}(\oo{D}).
\end{equation*}
Thus, we have
\begin{equation}\label{identity on Phihat and Psihat}
	\widehat{\Phi}+\widehat{\Psi}\rho^{n+2}\log\rho+\rho^{n+2}f=0 \quad \text{on } D,
\end{equation}
where $f\in C^{N-2}(\oo{D})$.
We pause to prove the following lemma.
\begin{lemma}\label{Lem vanishing lemma}
	Assume that $\phi$ and $\psi$ are $C^{\infty}$-smooth functions on $(-\varepsilon, \varepsilon)$, for some $\varepsilon>0$, and $\varphi$ is a $C^m$-smooth function on $(-\varepsilon, \varepsilon)$, for some nonnegative integer $m$. If
	\begin{equation}\label{vanishing lemma}
		\phi+\psi t^{n+2}\log t+ t^{n+2}\varphi=0 \quad \text{on } (0, \varepsilon),
	\end{equation}
	then $\phi=O(t^{n+2})$ and $\psi=O(t^{m+1})$.
\end{lemma}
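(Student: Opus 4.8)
The plan is to read off the vanishing orders of $\phi$ and $\psi$ one coefficient at a time from the single identity \eqref{vanishing lemma}, by dividing through by successive powers of $t$ and letting $t \to 0^+$; the only analytic inputs are that $t^a\log t \to 0$ and $t^a\varphi(t)\to 0$ for every integer $a\geq 1$, together with the fact that the one-sided limit $\lim_{t\to0^+}\phi(t)/t^i$ computes $\phi^{(i)}(0)/i!$ once the lower Taylor coefficients of the smooth function $\phi$ are known to vanish.

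For the first conclusion I would show $\phi^{(i)}(0)=0$ for $0\le i\le n+1$ by induction on $i$. Assuming $\phi^{(0)}(0)=\dots=\phi^{(i-1)}(0)=0$, Taylor's theorem gives $\phi(t)/t^i = \phi^{(i)}(0)/i! + o(1)$; on the other hand, dividing \eqref{vanishing lemma} by $t^i$ yields $\phi(t)/t^i = -\psi(t)\,t^{n+2-i}\log t - t^{n+2-i}\varphi(t)$, and since $n+2-i\geq 1$, $\psi$ is bounded near $0$, and $\varphi$ is continuous, the right-hand side tends to $0$. Hence $\phi^{(i)}(0)=0$, and after the step $i=n+1$ we conclude $\phi=O(t^{n+2})$. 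Using this and Taylor's theorem with integral remainder (Hadamard's lemma), I would then write $\phi(t)=t^{n+2}\widetilde{\phi}(t)$ with $\widetilde{\phi}$ smooth on $(-\varepsilon,\varepsilon)$, divide \eqref{vanishing lemma} by $t^{n+2}$ on $(0,\varepsilon)$, and arrive at $\psi(t)\log t = G(t)$, where $G:=-\widetilde{\phi}-\varphi\in C^m(-\varepsilon,\varepsilon)$.

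For the second conclusion I would run an induction on $j=0,1,\dots,m$ proving that both $\psi$ and $G$ vanish to order $j$ at $0$. In the inductive step, with $j\leq m$, write $\psi(t)=c\,t^j+o(t^j)$ and $G(t)=c'\,t^j+o(t^j)$ where $c=\psi^{(j)}(0)/j!$ and $c'=G^{(j)}(0)/j!$ (here Taylor's theorem with Peano remainder is applied to $G$, which is legitimate since $G\in C^j$); substituting into $\psi(t)\log t=G(t)$ and dividing by $t^j$ gives $c\log t + o(\log t) = c' + o(1)$, which is possible only if $c=0$, so $\psi^{(j)}(0)=0$; feeding the improved bound $\psi(t)=O(t^{j+1})$ back in and again dividing by $t^j$ forces $c'=G^{(j)}(0)=0$. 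After the step $j=m$ we obtain $\psi^{(m)}(0)=0$, hence $\psi=O(t^{m+1})$, as desired.

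I do not expect a genuine obstacle here; the proof is elementary. The only points requiring care are the bookkeeping of the two inductions — in particular, verifying that each division by $t^i$ or $t^j$ is legitimate because the lower-order Taylor coefficients have already been killed — and the regularity claims, namely that $\widetilde{\phi}$ is honestly smooth (so that $G$ inherits exactly the $C^m$-regularity of $\varphi$) and that the order-$m$ Peano–Taylor expansion of $G$ is available even though $G$ need not be $C^{m+1}$. These are all routine, and stating them carefully is the main content of the write-up.
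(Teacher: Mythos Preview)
Your proposal is correct and follows essentially the same approach as the paper: both arguments divide \eqref{vanishing lemma} by successive powers of $t$ and read off vanishing of Taylor coefficients, first for $\phi$ and then (after factoring out $t^{n+2}$ and absorbing $\phi/t^{n+2}$ into the $C^m$ term) for $\psi$. The only cosmetic difference is that the paper phrases each step as a proof by contradiction---pick the first nonvanishing coefficient and derive an absurdity---whereas you package the same computations as a forward double induction, additionally tracking the vanishing of $G$; the analytic content is identical.
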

\begin{proof}
	We first show $\phi=O(t^{n+2})$. Suppose not. Then there exists $0\leq n_0\leq n+1$ and $a\neq 0$ such that $\phi(t)=at^{n_0}+O(t^{n_0+1})$. Dividing \eqref{vanishing lemma} by $t^{n_0}$, we get
	\begin{equation*}
		\frac{\phi}{t^{n_0}}+\psi t^{n+2-n_0}\log t+ t^{n+2-n_0}\varphi=0 \quad \text{on } (0, \varepsilon).
	\end{equation*}
	Let $t\rightarrow 0^+$ and we get $a=0$, an obvious contradiction. Therefore, $\phi=O(t^{n+2})$ and, consequently,
	\begin{equation*}
		\phi+t^{n+2}\varphi=t^{n+2}\widehat{\varphi}
	\end{equation*}
	for some $\widehat{\varphi}\in C^m(-\varepsilon, \varepsilon)$. We use this to reduce \eqref{vanishing lemma} to
	\begin{equation}\label{vanishing lemma 2}
		\psi\log t+\widehat{\varphi}=0 \quad \text{on } (0, \varepsilon).
	\end{equation}
	Next suppose $\psi\neq O(t^{m+1})$. Then there exists some $0\leq m_0\leq m$ and $b\neq 0$ such that
	\begin{equation*}
		\psi=bt^{m_0}+O(t^{m_0+1}).
	\end{equation*}
	On the other hand, since $\widehat{\varphi}\in C^m(-\varepsilon, \varepsilon)$, we have either $\widehat{\varphi}=O(t^{m_0})$ or $\widehat{\varphi}=ct^{\widehat{m}}+O(t^{\widehat{m}+1})$ for some $\widehat{m}<m_0$ and $c\neq 0$. If it is the former case, we divide \eqref{vanishing lemma 2} by $t^{m_0}\log t$ and let $t\rightarrow 0^+$ to obtain
	\begin{equation*}
		b=\lim_{t\rightarrow 0^+} \Bigl(\frac{\psi}{t^{m_0}}+\frac{\widehat{\varphi}}{t^{m_0}\log t}\Bigr)=0,
	\end{equation*}
	a contradiction. If it is the latter case, dividing \eqref{vanishing lemma 2} by $t^{\widehat{m}}$ and letting $t\rightarrow 0^+$, we see
	\begin{equation*}
		c=\lim_{t\rightarrow 0^+} \Bigl(\frac{\psi}{t^{\widehat{m}}}\log t+\frac{\widehat{\varphi}}{t^{\widehat{m}}}\Bigr)=0,
	\end{equation*}
	which is also a contradiction. This finishes the proof of Lemma \ref{Lem vanishing lemma}.
\end{proof}

Applying Lemma \ref{Lem vanishing lemma} to \eqref{identity on Phihat and Psihat} along all smooth curves in $D$ intersecting transversally with $S$, we obtain
\begin{equation*}
	\widehat{\Phi}=O(\rho^{n+2}), \qquad \widehat{\Psi}=O(\rho^{N-1}).
\end{equation*}
Combining them with \eqref{Phihat and Psihat def}, and replacing $N$ by $N+2$, we achieve \eqref{relation between Phi and A} and \eqref{relation between Psi and B}.

To prove the uniqueness of $\{\alpha_j\}_{j=0}^{n+1}$, we suppose $\{\widehat{\alpha}_j\}_{j=0}^{n+1}$ is another family of smooth functions on $M$ such that
\begin{equation*}
	\Phi=\sum_{j=0}^{n+1}(\widehat{\alpha}_j \circ \pi)\rho^j +O(\rho^{n+2}).
\end{equation*}
Then this together with \eqref{relation between Phi and A} imply
\begin{equation*}
	\sum_{j=0}^{n+1}\bigl(\alpha_j \circ \pi-\widehat{\alpha}_j \circ \pi\bigr)\rho^j=O(\rho^{n+2}).
\end{equation*}
Comparing the vanishing order of both sides on $S$, we get
\begin{equation*}
	\alpha_j \circ \pi =\widehat{\alpha}_j \circ \pi \text{ on } S, \quad \forall~~ 0\leq j\leq n+1.
\end{equation*}
But $\pi: S \rightarrow M$ is surjective. We conclude $\alpha_j$ equals $\widehat{\alpha}_j$ as functions on $M$. The uniqueness of $\beta_j$ also follows by a similar argument. This finishes the proof of the first assertion in Theorem \ref{Thm the coeffiecients relation}.

It is clear from the above proof that $\alpha_j$'s and $\beta_j$'s  can be explicitly computed in terms of $a_j$'s by \eqref{relation between A and a} and \eqref{relation between B and a}.  To prove the last conclusion in Theorem \ref{Thm the coeffiecients relation}, we note \eqref{relation between B and a} implies
\begin{equation*}
	(\beta_0, \cdots, \beta_j)^{\intercal}=M(a_{n+2}, \cdots, a_{n+2+j})^{\intercal}
\end{equation*}
for some lower-triangular matrix $M$ with diagonal entries $\frac{1}{(2\pi)^{n+1}}\tau_{ii}=\frac{(-1)^{i+1}}{(2\pi)^{n+1}\, i!}\neq 0$. Consequently, conditions (i) and (ii) in Theorem \ref{Thm the coeffiecients relation} are equivalent. This finishes the proof of Theorem \ref{Thm the coeffiecients relation}.
\end{proof}

\section{Proof of Theorem \ref{Thm BLF criterion}, Corollary \ref{Cor BLF criterion compact} and Corollary \ref{Cor cpt homogeneous is BLF}}\label{Sec 4}

We first note that Theorem \ref{Thm BLF criterion} is an immediate consequence of Corollary \ref{Cor Fefferman expansion} and Theorem \ref{Thm the coeffiecients relation}.
\begin{proof}[Proof of Theorem $\ref{Thm BLF criterion}$]
Note that condition (ii), by Corollary \ref{Cor Fefferman expansion},  is equivalent to the following:
	\begin{itemize}
		\item[(iii)] $\Psi$ vanishes to infinite order along $\Sigma$.
	\end{itemize}
Here (iii) means, in terms of the notation in Theorem \ref{Thm the coeffiecients relation} (see equation \eqref{relation between Psi and B} there), $\beta_j=0$ on $U$ for all $j\geq 0$. By the last assertion of Theorem \ref{Thm the coeffiecients relation}, this is equivalent to condition (i) in Theorem \ref{Thm BLF criterion}. The proof of Theorem \ref{Thm BLF criterion} is complete.
\end{proof}

Next, we prove Corollary \ref{Cor BLF criterion compact}.
\begin{proof}[Proof of Corollary $\ref{Cor BLF criterion compact}$]
We first prove the equivalence of \text{(i)} and \text{(ii)}. The implication ``$\text{(ii)}\implies \text{(i)}$'' follows immediately from Theorem \ref{Thm BLF criterion}. It remains to prove the converse. For that, we assume (i) holds, i.e., $a_{n+2+m}$ is constant on $M$ for every $m\geq 0$. By Proposition \ref{Prop TYZ expansion}, we have
	\begin{equation*}
		B_k(z)\sim \bigg(\frac{k}{\pi}\bigg)^n\sum_{j=0}^{\infty} \frac{a_j(z)}{k^j}, ~\text{ locally uniformly for } z\in M.
	\end{equation*}
	Now that $M$ is compact, this implies that (see Remark \ref{rmk asymptotic expansion meaning}): For any $N\geq 0$, there exists a constant $C_N>0$, independent of $k$, such that
	\begin{equation*}
		\bigg|B_k(z)-\bigg(\frac{k}{\pi}\bigg)^n\sum_{j=0}^{N} \frac{a_j}{k^j} \bigg|\leq C_N k^{n-N-1}, \quad \text{for any $k\geq 1$ and $z\in M$}.
	\end{equation*}
	Taking the integral over $M$, we get for any $k\geq 1$,
	\begin{multline}\label{TYZ expansion integral}
		\Biggl|\int_M B_k(z)dV_g-\frac{1}{\pi^n}\sum_{j=0}^{N} \Bigl(\int_M a_jdV_g\Bigr)k^{n-j} \Biggr|\leq \\ \int_M \Bigl| B_k(z)-\bigg(\frac{k}{\pi}\bigg)^n\sum_{j=0}^{N}  a_j k^{-j} \Bigr| dV_g\leq C_N\Vol_M k^{n-N-1}.
	\end{multline}
Here, $dV_g$ is the volume form of the metric $g$ on $M$, and $\Vol_M$ denotes the volume of $M$ with respect to $dV_g.$ %with $g$ being the induced metric by $(L, h)$.
On the other hand, by the definition of $B_{k}$ we have
\begin{equation}\label{eqn bk h0}
	\dim H^0(M,  L^{k} \otimes \mathcal{C}_M)= \dim A^2(M,  L^{k} \otimes \mathcal{C}_M) = \int_M B_{k}(z) dV_g,
\end{equation}	
where, as above,  $\mathcal{C}_M$ denotes the canonical line bundle of $M$. Since $L^{k}$ is positive, by the Kodaira vanishing theorem, we get
\begin{equation*}\label{Kodaira vanishing}
	H^q(M,  L^{k} \otimes \mathcal{C}_M)=0, \quad \forall q\geq 1.
\end{equation*}
Recall that the holomorphic Euler characteristic of $E:=L^{k} \otimes \mathcal{C}_M$ in sheaf cohomology is given by
\begin{equation*}\label{Euler characteristic}
	\chi(M, E)=\sum_{j=0}^n (-1)^j \dim_{\mathbb{C}} H^i(M, E).
\end{equation*}	
Thus, we have %\eqref{Kodaira vanishing} and \eqref{Euler characteristic} we get
\begin{equation}\label{eqn chi h0}
	\chi(M, E)=\dim_{\mathbb{C}} H^0(M, E)=\int_M B_{k}(z) dV_g.
\end{equation}
Moreover, by the Riemann--Roch--Hirzebruch theorem, it follows that
\begin{equation}\label{Riemann-Roch-Hirzebruch}
	\chi(M, E)=\int_M \mathrm{Ch}(E)\wedge \mathrm{Td}(M)=\sum_{j=0}^n \int_M \mathrm{Ch}_{n-j}(E)\wedge \mathrm{Td}_j(M),
\end{equation}
where $\mathrm{Ch}(E)$ is the Chern character of $E$, $\mathrm{Td}(M)$ the Todd class of the tangent bundle of $M$, $\mathrm{Ch}_{n-j}(E)$  the $(n-j, n-j)$-form part of $\mathrm{Ch}(E)$, and $\mathrm{Td}_j(M)$ the $(j, j)$-form part of $\mathrm{Td}(M)$.
We also recall that
\begin{equation*}
	\mathrm{Ch}(E)=e^{c_1(E)}=\sum_{l=0}^{\infty} \frac{c_1(E)^l}{l!}=\sum_{l=0}^{\infty} \frac{\bigl(-c_1(M)+k\,c_1(L) \bigr)^l}{l!}.
\end{equation*}
Combining this with \eqref{Riemann-Roch-Hirzebruch}, we obtain
\begin{align*}
	\chi(M, E)=\sum_{j=0}^n\int_M \frac{\bigl(-c_1(M)+k\,c_1(L)\bigr)^{n-j}}{(n-j)!} \mathrm{Td}_j(M),
\end{align*}
which is a polynomial in $k$ of degree $n$. Consequently, by \eqref{eqn chi h0}, $\int_M B_k(z) dV_g$
	%\begin{equation*}
		%\int_M B_k(z) dV_g=\dim H^0(M, L^k\otimes \mathcal{C}_M)=\chi(M, L^k\otimes \mathcal{C}_M)
	%\end{equation*}
is also a polynomial in $k$ of degree $n$. Let us express this as
	\begin{equation*}
		\int_M B_k(z) dV_g= \lambda_n k^n+\cdots+\lambda_1k+\lambda_0,
	\end{equation*}
	for some constants $\lambda_j$. Putting this into \eqref{TYZ expansion integral} and taking $N>n+1$, we conclude that
	\begin{equation*}
		\frac{1}{\pi^n}\int_M a_j dV_g=\lambda_{n-j}, \quad \forall 0\leq j\leq n \quad \text{and} \quad \int_M a_{n+1}dV_g=\cdots=\int_M a_{N}dV_g=0.
	\end{equation*}
	Since $N>n+1$ is arbitrary, we obtain $\int_Ma_{n+1+m}dV_g=0$ for all $m\geq 0$. But by the assumption in (i), every $a_{n+2+m}$ is constant on $M$. This implies $a_{n+2+m} \equiv 0$ on $M$ for all $m\geq 0$. By Theorem \ref{Thm BLF criterion}, $S$ is Bergman logarithmically flat, i.e., condition (ii). This proves the equivalence of \text{(i)} and \text{(ii)}.
	
We finally consider condition \text{(iii)}. As proved above, we always have $\int_M a_{n+2+m}\, dV_g = 0$ for all $m \geq 0.$ Then the sign condition in \text{(iii)} is equivalent to $a_{n+2+m} \equiv 0$ on $M$ for all $m \geq 0$. By Theorem \ref{Thm BLF criterion}, this is further equivalent to $S$ being Bergman logarithmically flat. Hence \text{(iii)} is equivalent to \text{(ii)}. This completes the proof of Corollary \ref{Cor BLF criterion compact}.
\end{proof}

\begin{rmk}
Note that in the proof of Corollary \ref{Cor BLF criterion compact}, we obtained the additional conclusion
$\int_M a_{n+1}\, dV_g = 0,$
which was not needed in establishing the corollary. We also note that, in fact, we established the vanishing of the integrals $\int_M a_{n+1+m}\, dV_g$, for all $m\geq 0$, without any of the (equivalent) assumptions (i)-(iii). We state this result as a separate proposition.
%Moreover, instead of assuming that $a_{n+2+m}$ is constant on $M$ for all $m \geq 0$, one may assume that
%$a_{n+2+m} \geq 0 \quad \text{(or } a_{n+2+m} \leq 0\text{)}$ on $M$ for all $m \geq 0$. In this case, we can still conclude that $S$ is Bergman logarithmically flat. Indeed, since
%$\int_M a_{n+2+m}\, dV_g = 0 \quad \text{for all } m \geq 0,$
%the sign condition forces $a_{n+2+m} \equiv 0$ on $M$ for all $m \geq 0$.
\end{rmk}

\begin{prop}\label{Prop compactM}
Let $(M, g; L,h)$ be a polarized compact manifold. Let $\{a_j\}_{j=0}^{\infty}$ be
%as in Theorem \ref{Thm the coeffiecients relation}.
the coefficient functions appearing in the expansion \eqref{TYZ expansion intro} of the Bergman kernel function $B_k$. Then,
\begin{equation}
\int_M a_{n+1+m}\, dV_g=0,\qquad \forall m\geq 0.
\end{equation}
\end{prop}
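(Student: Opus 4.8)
The plan is to integrate the Tian--Yau--Zelditch--Catlin expansion of $B_k$ over the compact manifold $M$ and compare the resulting asymptotic series with an \emph{exact} polynomial identity for $\int_M B_k\,dV_g$ furnished by the Kodaira vanishing theorem and Hirzebruch--Riemann--Roch. Since $M$ is compact, Proposition~\ref{Prop TYZ expansion} (in the form of Remark~\ref{rmk asymptotic expansion meaning}) gives, for each $N\geq 0$, a constant $C_N>0$ independent of $k$ such that $\bigl|B_k(z)-(\tfrac{k}{\pi})^n\sum_{j=0}^N a_j(z)k^{-j}\bigr|\leq C_N k^{n-N-1}$ for all $z\in M$ and $k\geq 1$. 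Integrating against $dV_g$ yields
\[
\Bigl|\int_M B_k\,dV_g-\frac{1}{\pi^n}\sum_{j=0}^N\Bigl(\int_M a_j\,dV_g\Bigr)k^{n-j}\Bigr|\leq C_N\,\Vol_M\,k^{n-N-1},\qquad k\geq 1.
\]

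Next I would identify $\int_M B_k\,dV_g$ explicitly. By the definition of the Bergman kernel function and the fact that on a compact manifold every holomorphic section is automatically $L^2$, one has $\int_M B_k\,dV_g=\dim H^0(M,L^k\otimes\mathcal{C}_M)$. Since $L$ is positive, so is $L^k$, and the Kodaira vanishing theorem gives $H^q(M,L^k\otimes\mathcal{C}_M)=0$ for all $q\geq 1$; hence $\int_M B_k\,dV_g=\chi(M,L^k\otimes\mathcal{C}_M)$. By Hirzebruch--Riemann--Roch, $\chi(M,L^k\otimes\mathcal{C}_M)=\int_M \mathrm{Ch}(L^k\otimes\mathcal{C}_M)\wedge\mathrm{Td}(M)$ is a polynomial $P(k)$ in $k$ of degree exactly $n$, with leading coefficient $\tfrac{1}{n!}\int_M c_1(L)^n>0$. (This is precisely the computation already carried out inside the proof of Corollary~\ref{Cor BLF criterion compact}, so the proposition is really just the extraction of that intermediate conclusion, valid without any of the hypotheses (i)--(iii) there.)

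Finally I would combine the two facts: for every $N$, the function $k\mapsto P(k)-\tfrac{1}{\pi^n}\sum_{j=0}^N(\int_M a_j\,dV_g)k^{n-j}$ is $O(k^{n-N-1})$. Expanding $P$ in powers of $k$ and taking $N\geq n+1$, the terms involving $k^0,\dots,k^n$ must cancel identically (this determines $\int_M a_j\,dV_g$ for $0\leq j\leq n$ in terms of the coefficients of $P$), leaving $\tfrac{1}{\pi^n}\sum_{j=n+1}^N(\int_M a_j\,dV_g)k^{n-j}=O(k^{n-N-1})$; since the dominant term of this Laurent polynomial in $1/k$ as $k\to\infty$ is $(\int_M a_{n+1}\,dV_g)k^{-1}$, an easy induction on $j$ (letting $N\to\infty$) forces $\int_M a_{n+1+m}\,dV_g=0$ for every $m\geq 0$, which is the claim. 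The only step requiring any care is this last elementary matching argument --- passing from an asymptotic identity between a series and a genuine polynomial to the vanishing of the tail --- but there is no substantive obstacle.
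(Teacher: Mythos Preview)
Your proposal is correct and follows essentially the same route as the paper: integrate the TYZ expansion over $M$, identify $\int_M B_k\,dV_g$ with $\dim H^0(M,L^k\otimes\mathcal{C}_M)=\chi(M,L^k\otimes\mathcal{C}_M)$ via Kodaira vanishing, recognize this as a polynomial in $k$ of degree $n$ by Hirzebruch--Riemann--Roch, and then match coefficients to force $\int_M a_{n+1+m}\,dV_g=0$. You also correctly identify that this is exactly the intermediate computation in the proof of Corollary~\ref{Cor BLF criterion compact}, extracted as a stand-alone statement without the hypotheses (i)--(iii).
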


We conclude this section by proving Corollary \ref{Cor cpt homogeneous is BLF}.
\begin{proof}[Proof of Corollary $\ref{Cor cpt homogeneous is BLF}$]
Recall that, by Proposition \ref{Prop TYZ expansion}, the functions $a_j$ in \eqref{TYZ expansion intro} are polynomials in the curvature of $(M, g)$ and its covariant derivatives. As a consequence, if $(M, g)$ is locally homogeneous, then each $a_j$ for $j\geq 0$ must be constant on $M$. Hence, by Corollary \ref{Cor BLF criterion compact},
$S$ is Bergman logarithmically flat. Finally, the last assertion in Corollary \ref{Cor cpt homogeneous is BLF} follows immediately from the work of Bryant \cite{Bry01}, Webster \cite{Web78}, and Wang \cite[Theorem 12]{Wang} (see also, e.g., \cite[Proposition 1.12]{EXX23}). This finishes the proof of Corollary \ref{Cor cpt homogeneous is BLF}.
\end{proof}
	
%\begin{rmk}\label{RMK spherical}
%Let $(M, g; L, h)$ and $S$ be as in Corollary \ref{Cor cpt homogeneous is BLF}. Then we have $S$ is spherical if and only if $(M, g)$ is isometric to one of the following:
	%\begin{itemize}
		%\item [(1)] $(\mathbb{B}^n, \lambda \,\omega_{-1})$ for some $\lambda\in \mathbb{R}^+$,
		%\item [(2)] $(\mathbb{CP}^n, \lambda\, \omega_1)$ for some $\lambda\in \mathbb{R}^+$,
		%\item [(3)] $(\mathbb{C}^n, \omega_0)$,
		%\item [(4)] $(\mathbb{B}^l\times \mathbb{CP}^{n-l}, \lambda \omega_{-1}\times \lambda\omega_1)$ for some $1\leq l\leq n-1$ and some $\lambda\in\mathbb{R}^+$.
	%\end{itemize}
	%This fact follows immediately from the work of Bryant \cite{Bry01}, Webster \cite{Web78} and Wang \cite[Theorem 12]{Wang}.
%\end{rmk}	

\section{Appendix. A more general form of Proposition \ref{Prop TYZ expansion} and a proof}\label{Sec Appendix}
In this appendix, we first prove the following proposition and then show that Proposition \ref{Prop TYZ expansion} follows from it as a special case. We establish this more general version because it will be utilized in a subsequent paper on the Szeg\"o kernel \cite{EXX25}.
\begin{prop}\label{Prop TYZ expansion Appendix}
	Let $(M, g; L,h)$ be a polarized manifold, and assume that $M$ admits some complete \k metric. Let $(L', h')$ be another (not necessarily positive) Hermitian line bundle over $M$. Suppose there exists some constant $C_0 \geq 0$ such that
	\begin{equation}\label{curvature assumption lmg}
		\Ric(L', h')+\Ric(M, g) \geq -C_0 g \quad \text{on } M.
	\end{equation}
	 Consider the Bergman kernel function $B_k=B_{E_k}$ of $(E_k=L^k\otimes L', h_k=h^k\otimes h'),$ where $k\in \mathbb{Z}^+$. Then, there exist unique smooth functions $a_j$ for $j\geq 0$ such that $B_k$ has the following asymptotic expansion:
	\begin{equation}\label{TYZ expansion Appendix}
		B_k(z)\sim \bigg(\frac{k}{\pi}\bigg)^n\sum_{j=0}^{\infty} \frac{a_j(z)}{k^j}, ~\text{ locally uniformly for } z\in M.
	\end{equation}
	Moreover, the functions $a_{j}$ are all universal polynomials in the curvatures of $(M, g)$ and $(L', h')$ together with their covariant derivatives with respect to the Levi-Civita and Chern connections, respectively.
\end{prop}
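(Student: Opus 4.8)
The plan is to adapt the local Bergman kernel method of Berman--Berndtsson--Sj\"ostrand \cite{BBSj08} (see also \cite{DHSj22, HS22}) to the present twisted, possibly non-compact setting, the point being that the two hypotheses enter essentially only in one place: the completeness of some K\"ahler metric on $M$ makes H\"ormander's $L^2$-existence theorem available on the non-compact $M$, while the curvature bound \eqref{curvature assumption lmg} is exactly what forces the Chern curvature of $E_k\otimes\mathcal{C}_M^{-1}$ to be positive with least eigenvalue $\gtrsim k$ once $k$ is large. First I would set up the local model: fix $p\in M$, a trivializing chart $(U,z)$ centered at $p$, and frames $e_L$ of $L$, $e_{L'}$ of $L'$; writing sections of $E_k$ over $U$ via the frame $e_L^{\,k}\otimes e_{L'}$, the inner product \eqref{L2 inner product} becomes a weighted $L^2$-product on $U$ with weight $k\phi+\psi$, where $\phi$ is the local K\"ahler potential of $g$ (so $g_{i\bar j}=\partial_i\partial_{\bar j}\phi$) and $\psi$ is a fixed smooth function incorporating the weight of $(L',h')$ and the density $\log\det g$. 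Following \cite{BBSj08}, I would construct a local reproducing kernel
\[
K^{\#}_k(z,w)=\Big(\frac{k}{\pi}\Big)^{n}e^{k\widetilde\phi(z,\bar w)}\sum_{j=0}^{N}k^{-j}b_j(z,\bar w),
\]
holomorphic in $z$ and antiholomorphic in $w$, where $\widetilde\phi$ is the (almost-)holomorphic polarization of $\phi$ with $\widetilde\phi(z,\bar z)=\phi(z)$, and the amplitudes $b_j$ are produced by a recursion (from the stationary phase / Laplace method) chosen so that the associated integral operator reproduces local holomorphic functions modulo $O(k^{-N-1})$ in $C^m$ on a smaller ball. The recursion depends only on $n$ and on the Taylor coefficients of $\phi$ and $\psi$ at $p$, and this universality of the construction is what will ultimately yield the universality of the $a_j$.

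For the globalization I would pick cutoffs and set $\widetilde K_k(\cdot,w):=\chi(\cdot)\,K^{\#}_k(\cdot,w)$, a global section of $E_k$ for $w$ near $p$. Then $\bar\partial_z\widetilde K_k(\cdot,w)=(\bar\partial\chi)\,K^{\#}_k(\cdot,w)$ is supported in a fixed annulus off the diagonal, where strict plurisubharmonicity of $\phi$ gives $2\operatorname{Re}\widetilde\phi(z,\bar w)-\phi(z)-\phi(w)\le-\delta<0$ for some $\delta>0$; hence $\|\bar\partial_z\widetilde K_k(\cdot,w)\|_{L^2}=O(e^{-\delta k})=O(k^{-\infty})$ uniformly in $w$ near $p$. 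Now solve $\bar\partial u_k(\cdot,w)=\bar\partial_z\widetilde K_k(\cdot,w)$: regarding an $E_k$-valued $(0,1)$-form as an $F_k$-valued $(n,1)$-form with $F_k:=E_k\otimes\mathcal{C}_M^{-1}=L^k\otimes L'\otimes\mathcal{C}_M^{-1}$, its Chern curvature is
\[
i\Theta(F_k)=k\,\Ric(L,h)+\Ric(L',h')+\Ric(M,g)=2k\,\omega_g+\bigl(\Ric(L',h')+\Ric(M,g)\bigr)\ge(2k-C_0)\,\omega_g\ge k\,\omega_g
\]
for $k\ge C_0$, using $\Ric(L,h)=2\omega_g$ and \eqref{curvature assumption lmg} (read as an inequality of $(1,1)$-forms). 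Since $M$ admits a complete K\"ahler metric, the H\"ormander--Andreotti--Vesentini--Demailly theorem applies (the $L^2$-estimate for forms of top degree in $dz$ depends only on the curvature of $F_k$, not on the auxiliary complete metric), producing the minimal solution $u_k(\cdot,w)\perp A^2(M,E_k)$ with $\|u_k(\cdot,w)\|_{L^2}^2\le\frac{C}{k}\|\bar\partial_z\widetilde K_k(\cdot,w)\|_{L^2}^2=O(k^{-\infty})$. Then $\widetilde K_k(\cdot,w)-u_k(\cdot,w)\in A^2(M,E_k)$, and combining the reproducing property of $K_{E_k}$ with the local reproducing property of $K^{\#}_k$ gives $K_{E_k}(\cdot,w)=\widetilde K_k(\cdot,w)-u_k(\cdot,w)+O(k^{-\infty})$ in $L^2$, hence in $C^m$ near $p$ by the sub-mean-value estimate \eqref{eqn bk cy}. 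Restricting to the diagonal $w=z$, the $u_k$-term contributes $O(k^{-\infty})$ while $|K^{\#}_k(z,z)|_{h_k}$ expands, via $\widetilde\phi(z,\bar z)=\phi(z)$ and the $b_j$, as $\bigl(\frac{k}{\pi}\bigr)^{n}\sum_{j\le N}a_j(z)k^{-j}+O(k^{n-N-1})$; letting $N\to\infty$ yields \eqref{TYZ expansion Appendix}, and uniqueness of the $a_j$ is automatic from the notion of asymptotic expansion.

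It remains to prove universality and to recover Proposition~\ref{Prop TYZ expansion}. Running the local construction in K\"ahler normal coordinates for $g$ at $p$ and in a frame of $L'$ that is normal and unitary at $p$ expresses each $a_j(p)$ as a universal polynomial in the components of the curvature of $(M,g)$, the curvature of the Chern connection of $(L',h')$, and their covariant derivatives at $p$ — since these exactly encode the admissible Taylor data — by the standard argument of Lu \cite{Lu00} (cf.\ \cite{Wang05, Wa03} and Remark~\ref{rmk aj curvature}); after the normalization $b_0(z,\bar z)\equiv 1$ one gets $a_0=1$. Proposition~\ref{Prop TYZ expansion} is then the special case $L'=\mathcal{C}_M$, $h'=H=(\det g)^{-1}$, for which $\Ric(L',h')+\Ric(M,g)=0$, so \eqref{curvature assumption lmg} holds with $C_0=0$.

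The step I expect to be the main obstacle is the global $\bar\partial$-correction: arranging the H\"ormander estimate for the weight tied to the possibly incomplete polarized metric $g$ (rather than to the auxiliary complete one) with a constant that is $O(1/k)$ and uniform both in $k$ and in the parameter $w$, and then propagating enough regularity from the minimal $L^2$-solution to obtain a pointwise, not merely $L^2$, expansion. The local model, the stationary-phase recursion, and the off-diagonal Gaussian decay are all as in \cite{BBSj08, DHSj22, HS22}; the curvature hypothesis \eqref{curvature assumption lmg} is tailored precisely to feed this step.
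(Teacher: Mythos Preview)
Your proposal is correct and follows essentially the same route as the paper: construct the BBSj local reproducing kernel, globalize by a cutoff, apply a Demailly-type $\bar\partial$ estimate on $L^k\otimes L'\otimes\mathcal{C}_M^{-1}$ (precisely where completeness and \eqref{curvature assumption lmg} enter), pass from $L^2$ to pointwise via \cite{HS22}, and invoke \cite{Lu00} for universality. One small correction: since $\widetilde\phi$ is only \emph{almost} holomorphic in the smooth case, $\bar\partial_z\widetilde K_k$ has a second term $\chi\,\bar\partial_z K^{\#}_k$ supported on all of $\operatorname{supp}\chi$ rather than only on the annulus; this term is still $O(k^{-\infty})$ in norm by almost-holomorphicity combined with the off-diagonal Gaussian decay, and the paper treats it explicitly.
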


\begin{rmk}\label{rmk asymptotic expansion meaning appendix}
As in Remark \ref{rmk asymptotic expansion meaning} above, the asymptotic expansion \eqref{TYZ expansion Appendix} means that, for any $N\geq 0$, $m\geq 0$ and $X \subset\subset M$, there exists $C_{N, m, X}$ depending on $N, m, X$, $(M, g), (L, h)$ and $( L', h'),$ while independent of $k$, such that
	\begin{equation*}
		\Bigg\|B_k-\bigg(\frac{k}{\pi}\bigg)^n\sum_{j=0}^{N} \frac{a_j(z)}{k^j} \Bigg\|_{C^m(X)}\leq C_{N, m, X} k^{n-N-1}, \quad \forall k\geq 1.
	\end{equation*}
	%\added{I agree with that $C_{N, M, K}$ depends on $\phi$ and therefore on $(L, h)$. See the last line in the proof of Theorem \ref{Thm derivative estimates on the BK}.}
	%\added{Change $X$ to $U$ here to be consistent with Remark \ref{rmk BergmaN kernel vs local kernel}?}
\end{rmk}

To establish Proposition \ref{Prop TYZ expansion Appendix}, we will follow the framework of \cite{BBSj08} (see also \cite[Appendix]{DHSj22} and \cite[Section 5]{HS22}).
As a consequence of the $L^2$ estimates in \cite[Theorem 6.1]{Dem97}, their method can be extended to general complex manifolds that possess a complete \k metric. Here, we make only the necessary minor adjustments to handle this general setting.

%\added{In the proof, we should point out in \cite[Proposition 2.7]{BBSj08} and thus in Proposition \ref{Prop TYZ expansion}, $b_j$'s are ``universal'' polynomials in the derivatives $\partial_x^{\alpha}\partial_{\oo{y}}^{\beta}\psi(x, \oo{y})$ of the almost holomorphic extension $\psi$ of $\phi$. More precisely, }
%$\clubsuit$
%\begin{itemize}
%\item[(1)] The $a_j(x, \oo{y})$ only depends on $j, \psi(x, \oo{y})$, and does not depend on $U, V$, etc.
%\item[(2)] On diagonal, $a_j(x)=a_j(x, \oo{x})$ only depends on $j$, $\phi(x)=\psi(x, \oo{x})$, and does not depend on $U, V$, nor the choice of the almost holomorphic extension $\psi$.
%\end{itemize}
%\added{I explain this in Remark 2.3 below. Is it okay?}
%$\clubsuit$

Let us first recall the local reproducing kernel.
%We say $(U, z)$ be a  coordinate chart of $(L,h)$, if $(U,z)$ is a coordinate chart of $M$, and there exists a frame $e_L$ of $L$ (equivalently, a frame $e_{L^*}$ of $L^*$) over $U$.
Fix $p\in M$. Let $U, V$ be small neighborhoods of $p$ such that $p\in V\subset\subset U \subset\subset M$. Shrinking them if needed, we can assume $(U, z)$ is a coordinate chart of $(M,g; L,h)$ and $(L', h')$ with a frame $e_{L}$ of $L$ and a frame $e_{L'}$ of $L'$ over $U$. We denote by $d\mu(z)$ the Lebesgue measure on $U$, with $dV_g(z)=\det g(z)\, d\mu(z)$.
We write $e^{-\phi(z)}=h(e_L(z), e_{L}(z))$, $e^{-\phi'(z)}=h'(e_{L'}(z), e_{L'}(z))$ and  define the local $L^2$-space and the local Bergman space as follows:
\begin{align*}
	L^2(U, e^{-k\phi-\phi'})=&\left\{f: U\rightarrow \mathbb{C} \text{ is measurable} \colon \int_U |f|^2 e^{-k\phi-\phi'} dV_g<\infty \right\},\\
	A^2(U, e^{-k\phi-\phi'})=&\left\{f\in L^2(U, e^{-k\phi-\phi'}) \colon f \text{ is holomorphic on } U\right\}.
\end{align*}
We will also write $A^2(U, e^{-k\phi-\phi'})$ as $A^2(U)$ for brevity.

Throughout this section, let $\chi$ be a smooth cut-off function satisfying $0\leq \chi\leq 1$, $\chi|_V  \equiv 1$ and $\supp\chi\subset\subset U$. The following notion of a local reproducing kernel was introduced in \cite{BBSj08}.
\begin{defn}\label{defn local Bergman kernel}{\rm
	A family of functions $\widetilde{K}^{(N)}_k(x, \oo{y})$ on $U\times U$ is said to be a {\em local reproducing kernel} $\mod O(\frac{1}{k^{N+1-n}})$ if for any $u\in A^2(U, e^{-k\phi-\phi'})$ and $x\in V$ we have
	\begin{equation}\label{approximate reproducing property}
		u(x)=\int_U \chi(y) u(y) \widetilde{K}^{(N)}_k(x, \oo{y}) e^{-k\phi(y)-\phi'(y)} dV_g(y)+O\bigg(\frac{1}{k^{N+1-n}}\bigg) e^{\frac{k}{2}\phi(x)} \|u\|_{A^2(U)},
	\end{equation}
uniformly on compact subsets of $V$. }
\end{defn}
Furthermore, \cite{BBSj08} provided an explicit construction of a local reproducing kernel.
\begin{prop}[{\cite[Proposition 2.7 and Section 2.5]{BBSj08}}]\label{Prop local reproducing kernel}
	There exists a local reproducing kernel $\widetilde{K}^{(N)}_k(x, \oo{y}) \mod O(\frac{1}{k^{N+1-n}})$ such that
	\begin{equation}\label{local reproducing kernel}
		\widetilde{K}^{(N)}_k(x, \oo{y})=\bigg(\frac{k}{\pi}\bigg)^n e^{k\psi(x, \oo{y})+\psi'(x, \oo{y})} \left(a_0(x, \oo{y})+\frac{a_1(x, \oo{y})}{k}+\cdots +\frac{a_N(x, \oo{y})}{k^N} \right),
	\end{equation}
	where $\psi(x, \oo{y})$ and $\psi'(x, \oo{y})$ are respectively the almost holomorphic extensions of $\phi(x)$ and $\phi'(x)$, and each $a_j(x, \oo{y})$ is a  universal polynomial in the derivatives of $\psi(x, \oo{y})$ and $\psi'(x, \oo{y})$.
\end{prop}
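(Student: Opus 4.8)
The statement is purely local at $p$, so the plan is to reproduce the construction of Berman--Berndtsson--Sj\"ostrand \cite{BBSj08}, carrying the extra weight $\phi'$ coming from $(L',h')$ along passively. I would work in the fixed chart $(U,z)$ centered at $p$, with the plurisubharmonic weight $\phi$ (so $\sqrt{-1}\,\partial\bar\partial\phi>0$, this being the curvature of $(L,h)$) and the not necessarily plurisubharmonic weight $\phi'$. First I would fix almost holomorphic extensions: let $\psi(x,\bar y)$ be a smooth function on $U\times U$, almost holomorphic in $x$ and in $\bar y$ (i.e.\ $\partial_{\bar x}\psi$ and $\partial_{y}\psi$ vanish to infinite order along the diagonal $x=y$), with $\psi(x,\bar x)=\phi(x)$; similarly let $\psi'(x,\bar y)$ be the corresponding almost holomorphic extension of $\phi'$. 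These exist by the standard Borel/Whitney construction. Shrinking $U$, strict plurisubharmonicity of $\phi$ yields the concentration estimate
\[
2\,\Real\psi(x,\bar y)-\phi(x)-\phi(y)\ \le\ -\delta\,|x-y|^2,\qquad x,y\in U,
\]
for some $\delta>0$; equivalently $\Real\bigl(k\psi(x,\bar y)-k\phi(y)\bigr)\le \tfrac{k}{2}\phi(x)-\tfrac{k\delta}{2}|x-y|^2$. With the ansatz \eqref{local reproducing kernel}, this already shows that the contribution to the integral in \eqref{approximate reproducing property} from $\{\,|x-y|\ge r_0\,\}$ is $O(k^{-\infty})\,e^{\frac{k}{2}\phi(x)}\|u\|_{A^2(U)}$, uniformly for $x$ in compact subsets of $V$ (where $\chi\equiv 1$ near such $x$ once $r_0$ is small).

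Next I would determine the amplitudes $a_0,a_1,\dots$ by forcing the near-diagonal part of the integral to reproduce $u(x)$ to the desired order. Substituting \eqref{local reproducing kernel} into the right-hand side of \eqref{approximate reproducing property} and localizing to a small polydisc around $y=x$ produces a $k$-dependent oscillatory integral with holomorphic density $u(y)$; since $u$ is holomorphic, one applies the holomorphic Laplace/stationary-phase machinery of \cite{BBSj08} --- Kuranishi's trick straightens the complex phase $\psi(x,\bar y)-\phi(y)$ to the model Bargmann phase, whose reproducing kernel is explicit --- and obtains an expansion in powers of $1/k$. Matching the leading term pins down $a_0$ (the Gaussian normalization cancels the factor $\det g$ in $dV_g$), and matching the $j$-th term gives a triangular recursion $L\,a_j=\Pi_j\bigl(\partial^\alpha\psi,\partial^\alpha\psi',a_0,\dots,a_{j-1}\bigr)$, where $L$ is an explicit invertible operator on almost holomorphic germs and $\Pi_j$ is a universal polynomial; solving recursively and choosing almost holomorphic representatives yields $a_j(x,\bar y)$ with the asserted structure. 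With these choices the stationary-phase remainder after $N$ steps is $O(k^{-(N+1)})$ relative to the $(k/\pi)^n$ prefactor, i.e.\ $O(k^{-(N+1-n)})\,e^{\frac{k}{2}\phi(x)}\|u\|_{A^2(U)}$; combined with the off-diagonal bound this is exactly \eqref{approximate reproducing property}.

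The real work --- and the step I expect to be the main obstacle --- is making the complex Laplace expansion rigorous and uniform. The phase $y\mapsto \psi(x,\bar y)-\phi(y)$ is complex valued and has no genuine critical point at $y=x$ (its $\partial_y$-derivative there equals $-\partial\phi(x)\neq 0$), so real stationary phase does not apply directly; the resolution is the device of \cite{BBSj08}, exploiting the holomorphy of $u$ via contour deformation (Kuranishi) to reduce to the model kernel and to bound all remainders by finitely many derivatives with $k$-independent constants, locally uniformly in $x\in V$. Once this is set up, the extra weight $\phi'$ is harmless: $e^{\psi'(x,\bar y)}$ is $k$-independent and almost holomorphic, so it affects neither the concentration estimate nor the shape of the recursion, and merely enlarges the list of arguments of the universal polynomials $a_j$. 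Accordingly I would present the proof as a citation to \cite[Proposition 2.7 and Section 2.5]{BBSj08} with only the cosmetic modification of adjoining $\phi'$; note that no global hypothesis (complete K\"ahler metric, the $L^2$ estimates of \cite{Dem97}) enters this local statement --- those are needed only later, when this local kernel is compared with the true global Bergman kernel.
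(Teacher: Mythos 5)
Your proposal is correct and matches the paper's treatment: the paper gives no independent proof of this proposition, citing it directly to \cite[Proposition 2.7 and Section 2.5]{BBSj08}, and your sketch accurately reproduces that construction (almost holomorphic extensions, the off-diagonal concentration estimate, the holomorphic stationary-phase recursion for the $a_j$) while correctly observing that the $k$-independent weight $\phi'$ enters only passively and that the global hypotheses are irrelevant to this local statement.
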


\begin{rmk}\label{RMK TYZ coefficients are curvatures}
	Note that the almost holomorphic extensions $\psi(x, \oo{y})$ and  $\psi'(x, \oo{y})$ of $\phi(x)$ and $\phi'(x)$ are only unique up to $O(|x-y|^{\infty})$, and thus $a_j(x, \oo{y})$ may depend on the choice of the almost holomorphic extension $\psi$ and $\psi'$.  On the other hand, when restricted to the diagonal $y=x$, every $a_j(x):=a_j(x, \oo{x})$ is unique.
	%Admittedly,But since every $a_j(x, \oo{y})$ is the almost holomorphic extension of $a_j(x, \oo{x})$ (which is uniquely determined), it is also unique up to $O(|x-\oo{y}|^{\infty})$. So the choice of the almost holomorphic extension $\psi$ does not affect our analysis \added{What is ``our analysis"? Why is it not affected?}.
	As explained in \cite[Section 2.6]{BBSj08}, for different choices of almost holomorphic extensions of $\phi$ and $\phi'$, the resulted different choices of $\widetilde{K}^{(N)}_k(x, \oo{y})$ do not affect the local reproducing property \eqref{approximate reproducing property}. Without loss of generality, we will assume that the almost holomorphic extension $\psi$ satisfies $\oo{\psi(x, \oo{y})}=\psi(y, \oo{x})$, since we can replace $\psi(x, \oo{y})$ by $\frac{1}{2}(\psi(x, \oo{y})+\oo{\psi(y, \oo{x})})$ if needed.
	%\added{This is used in the Calabi's Diastasis function later.}
\end{rmk}

Let $(U, z)$ and $e_L$ be as introduced before Definition \ref{defn local Bergman kernel}.  Let $k \geq 1$ and $K(x, y)=K_k(x, y)$ be the Bergman kernel of $L^k\otimes L'$; and for $y \in U$, write $K(\cdot, y)=\widehat{K}(\cdot, \oo{y})  \oo{e_L^k(y)}\otimes \oo{e_{L'}(y)}$ for $x, y\in U$, where $\widehat{K}(\cdot, \oo{y}) \in H^0(M, L^k\otimes L').$ We further write $\widehat{K}(x, \oo{y})=\widetilde{K}(x, \oo{y}) e_L^k(x) \otimes e_{L'}(x)$ for $x \in U$.  Note that we denote the global Bergman kernel of the line bundle by $K(x,y)$, with no bar over the second variable $y$. On the other hand, in local frames, we denote the corresponding sections and coefficient functions by $\widehat{K}(x, \oo{y})$ and $\widetilde{K}(x,\overline{y})$ respectively, with a bar on the second variable $y$. This convention is consistent with the notation in \cite{BBSj08}. We also note that, for convenience of notation, we have dropped the subscript $k$, which indicates the dependence on $k$. In order to be consistent, we shall also drop the subscript $k$ on the local reproducing kernel and use the notation $\widetilde{K}^{(N)}(x, \oo{y})=\widetilde{K}^{(N)}_k(x, \oo{y})$.

In the following theorem, we show that $\widetilde{K}$ is equal to the local reproducing kernel up to a small error term on a small precompact subset of $V$.
\begin{thm}\label{Thm derivative estimates on the BK}
%	Let $M$ be a complete \k manifold. Let $(L, h)$ be a positive line bundle over $M$, polarizing a metric $g$ on $M$. Let $(L', h')$ be another Hermitian line bundle over $M$.
Let $(M, g; L,h)$ be a polarized manifold, and assume that $M$ admits some complete metric. Let $(L', h')$ be another (not necessarily positive) Hermitian line bundle over $M$.
Suppose there exists some constant $C_0 \geq 0$ such that
	\begin{equation}\label{curvature assumption in TYZ expansion}
		\Ric(L', h')+\Ric(M, g) \geq -C_0 g \quad \text{on } M.
	\end{equation} Let $p\in M$ and $W\subset\subset V\subset\subset U\subset\subset M$ be small neighborhoods of $p$. Then, for any $N \geq 1$ and $\alpha, \beta \in \mathbb{Z}_{\geq 0}^n$, there exists a constant $C=C(U, V, W, N, \alpha, \beta)>0$ such that for any $k\geq 1$,
	\begin{equation*}
		\left|D_x^{\alpha} D_{\oo{y}}^{\beta} \left(\widetilde{K}(x, \oo{y})-\widetilde{K}^{(N)}(x, \oo{y}) \right)\right| e^{-\frac{k}{2}\phi(x)-\frac{k}{2}\phi(y)} \leq \frac{C}{k^{N+1-n-|\alpha|-|\beta|}}, \quad \forall x, y\in W.
	\end{equation*}
\end{thm}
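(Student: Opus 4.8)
The plan is to combine the global reproducing property of the true Bergman kernel $\widetilde K$ with the approximate local reproducing property of $\widetilde K^{(N)}$ (Proposition \ref{Prop local reproducing kernel}), estimating the discrepancy via a $\overline\partial$-solution with $L^2$ bounds on the complete Kähler manifold $M$. First I would record the two reproducing identities. For fixed $y\in V$, the section $\widehat K(\cdot,\oo y)\in H^0(M,L^k\otimes L')$ is reproduced by the global Bergman kernel, so in particular $\widetilde K(x,\oo y)=\int_U \chi(w)\widetilde K(w,\oo y)\widetilde K^{(N)}(x,\oo w)e^{-k\phi(w)-\phi'(w)}\,dV_g(w)+O(k^{-(N+1-n)})\,e^{\frac k2\phi(x)}\,\|\widehat K(\cdot,\oo y)\|$ on $V$, using that $u=\widetilde K(\cdot,\oo y)$ lies in $A^2(U)$ (with the usual weight) because it is the restriction of a global $L^2$ holomorphic section. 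Symmetrically, applying the local reproducing property in the $x$-variable to the (anti-holomorphic in $y$) function $\widetilde K^{(N)}(x,\oo y)$ — here one uses the reproducing property of $\widetilde K^{(N)}$ against itself, valid mod $O(k^{-(N+1-n)})$ as in \cite[Section 2.5]{BBSj08} — gives a comparison of $\widetilde K^{(N)}$ with the same integral expression. Subtracting, the leading integral terms cancel and one is left with $|\widetilde K(x,\oo y)-\widetilde K^{(N)}(x,\oo y)|e^{-\frac k2\phi(x)-\frac k2\phi(y)}\leq C k^{-(N+1-n)}$ on $W$, modulo bounding the $L^2$-norms $\|\widehat K(\cdot,\oo y)\|$ by $O(k^{n/2})e^{\frac k2\phi(y)}$, which follows from the extremal/sub-mean-value bound $B_k\leq C_Y k^n$ already invoked in \eqref{eqn bk cy}.

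**Passing to derivatives.** Since $\widetilde K(x,\oo y)-\widetilde K^{(N)}(x,\oo y)$ is, up to the fixed weight factor $e^{-\frac k2\phi(x)-\frac k2\phi(y)}$ which is smooth and independent of the holomorphic/antiholomorphic structure, holomorphic in $x$ and antiholomorphic in $y$ on $U\times U$, the derivative bounds follow from the sup-norm bound by the Cauchy estimates on a slightly larger polydisc: shrinking from $V$ to $W$ costs a factor $C(U,V,W,\alpha,\beta)$ and gains $k^{|\alpha|+|\beta|}$ from differentiating $e^{\pm\frac k2\phi}$ (each derivative of the weight produces a factor of order $k$). Concretely, I would first prove the $|\alpha|=|\beta|=0$ estimate on $V$ (rather than $W$), then deduce the general $(\alpha,\beta)$ estimate on $W\subset\subset V$ by Cauchy's integral formula applied to $(x,\oo y)\mapsto(\widetilde K-\widetilde K^{(N)})(x,\oo y)e^{-\frac k2\phi(x)-\frac k2\phi(y)}$ viewed as holomorphic in $(x,\bar y)$, carefully tracking that $\partial_x^\alpha\partial_{\oo y}^\beta$ of the weight contributes at most $C k^{|\alpha|+|\beta|}$.

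**The main obstacle.** The delicate point is making the local reproducing property \eqref{approximate reproducing property} genuinely applicable to the global kernel on a \emph{non-compact} $M$: one must know that $\widehat K(\cdot,\oo y)$, being in $A^2(M,L^k\otimes L')$, restricts to an element of $A^2(U,e^{-k\phi-\phi'})$ with $\|\cdot\|_{A^2(U)}\leq\|\widehat K(\cdot,\oo y)\|_{A^2(M)}$, and — more substantively — that the \emph{construction} of $\widetilde K^{(N)}$ via solving $\overline\partial$ with $L^2$ control goes through. This is exactly where the completeness hypothesis and the curvature lower bound \eqref{curvature assumption in TYZ expansion} enter: Demailly's $L^2$-estimates \cite[Theorem 6.1]{Dem97} for $\overline\partial$ on a complete Kähler manifold, applied to the weight $k\phi+\phi'$ whose curvature is $\geq (k-C_0)\omega_g$ for large $k$, replace the compactness-based Hörmander estimates used in \cite{BBSj08}. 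I would therefore spend the bulk of the argument verifying that the error term in \eqref{approximate reproducing property} — which in \cite{BBSj08} comes from a $\overline\partial$-correction to $\chi\,\widetilde K^{(N)}_k(x,\oo\cdot)$ — still satisfies the stated $O(k^{-(N+1-n)})e^{\frac k2\phi(x)}$ bound in this setting, the only change being the use of Demailly's theorem in place of the compact $L^2$-estimate; everything else (the stationary-phase construction of the $a_j$, their polynomiality in the curvatures, the cutoff bookkeeping) is local and transfers verbatim. The asserted bound then follows by assembling the two reproducing identities and the Cauchy estimates as above.
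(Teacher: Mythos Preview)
Your outline has the right ingredients---the local reproducing kernel $\widetilde K^{(N)}$, the sub-mean-value bound $B_k\leq Ck^n$, and Demailly's $L^2$-estimates on a complete K\"ahler manifold---but the way you assemble them contains a genuine gap, and you have misplaced where the $\overline\partial$-argument enters.

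The step ``subtracting, the leading integral terms cancel'' does not work as written. Your first identity (local reproducing applied to $u=\widetilde K(\cdot,\oo y)$) gives
\[
\widetilde K(x,\oo y)\approx\int_U\chi(w)\,\widetilde K(w,\oo y)\,\widetilde K^{(N)}(x,\oo w)\,e^{-k\phi-\phi'}\,dV_g,
\]
while the self-reproducing property of $\widetilde K^{(N)}$ gives the same with $\widetilde K(w,\oo y)$ replaced by $\widetilde K^{(N)}(w,\oo y)$. Subtracting produces an integral of the \emph{difference} $[\widetilde K-\widetilde K^{(N)}](w,\oo y)$ against $\widetilde K^{(N)}(x,\oo w)$: the quantity you want to bound reappears under the integral. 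This is circular and yields no estimate.

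The missing second identity comes instead from the \emph{global} Bergman projection. Set $Q_x:=\chi\cdot\oo{\widetilde K^{(N)}(x,\oo\cdot)}\,e_L^k\otimes e_{L'}$ (compactly supported in $U$) and $u_x:=Q_x-\Pi(Q_x)$; then $u_x$ is the $L^2$-minimal solution of $\overline\partial u=\overline\partial Q_x$ on $M$. Since $\overline\partial Q_x$ consists of a $\overline\partial\chi$ term (exponentially small via the diastasis) and the almost-holomorphicity defect of $\widetilde K^{(N)}$ (of order $O(|x-y|^\infty)$, hence $O(k^{-\infty})$ after the weight), Demailly's theorem---with the curvature operator for $L^k\otimes L'\otimes\mathcal C_M^{-1}$ bounded below by $k-C_0$ under \eqref{curvature assumption in TYZ expansion}---gives $\|u_x\|_{L^2}=O(k^{-m})e^{\frac k2\phi(x)}$ for every $m$, and a pointwise bound follows (the paper invokes \cite[Proposition~5.1]{HS22}). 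Unwinding $\Pi(Q_x)(y)$ in local coordinates and using the Hermitian symmetry $\widetilde K(y,\oo z)=\oo{\widetilde K(z,\oo y)}$ together with your first identity shows that $\widetilde u_x(y)$ equals $\widetilde K^{(N)}(x,\oo y)-\widetilde K(x,\oo y)$ up to conjugation and an error $O(k^{-(N+1-3n/2)})e^{\frac k2\phi(x)+\frac k2\phi(y)}$. This is where completeness and the curvature bound enter---\emph{not} in the construction of $\widetilde K^{(N)}$ or in the error term of \eqref{approximate reproducing property}, which are purely local stationary-phase statements and require no $\overline\partial$-solution.

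Finally, the difference $\widetilde K-\widetilde K^{(N)}$ is \emph{not} holomorphic in $x$ (nor antiholomorphic in $y$), since $\widetilde K^{(N)}$ is built from almost-holomorphic extensions; so the straight Cauchy estimates you propose do not apply. The paper passes to derivatives by inductively applying the Bochner--Martinelli formula on balls of radius $\sim 1/k$, each derivative costing a factor of $k$.
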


%we will also use the notations we introduced right after Corollary \ref{Cor Fefferman expansion}. Moreover,
In the following proof, we shall follow the standard convention of allowing constants in an estimate to change from line to line without renaming them---The same symbol $C$ (and similarly for $C_m, C_N$, etc.) may denote different constants in different places, though always independent of $k$.

\begin{proof}[Proof of Theorem $\ref{Thm derivative estimates on the BK}$]
	Fix $y\in U$. By the definition of the Bergman kernel $K$, we observe that  $\widehat{K}(\cdot , \oo{y})$ belongs to $A^2(M, L^k\otimes L')$ and
	\begin{equation} \label{eqn hatk}
		\int_{M} |\widehat{K} (\cdot, \oo{y})|^2_{h^k \otimes h'} dV_g = \left(\widehat{K} (\cdot, \oo{y}), \widehat{K}(\cdot, \oo{y})\right)_{L^2(M, L^k\otimes L')}= \widetilde{K}(y, \oo{y}) < \infty.
	\end{equation}
	Moreover, we note that $|\widehat{K} (x, \oo{y})|^2_{h^k \otimes h'}=|\widetilde{K}(x, \oo{y})|^2 e^{-k\phi(x)-\phi'(x)}$ for $x \in U$. This together with \eqref{eqn hatk} implies
	\begin{equation}\label{eqn widetildek a2}
		\widetilde{K}(\cdot, \oo{y})\in A^2(U) \quad \text{and} \quad \|\widetilde{K}(\cdot, \oo{y})\|^2_{A^2(U)}\leq \widetilde{K}(y, \oo{y}).
	\end{equation}
	Then, applying the asymptotic reproducing property \eqref{approximate reproducing property} of $\widetilde{K}^{(N)}$ with $u=\widetilde{K}(\cdot, \oo{y})$, we obtain
	\begin{equation}\label{eqn u equal widetildek}
		\widetilde{K}(x, \oo{y})=\int_U \chi(z) \widetilde{K}(z, \oo{y}) \widetilde{K}^{(N)}(x, \oo{z}) e^{-k\phi(z)-\phi'(z)} dV_g(z)+O\Bigl(\frac{1}{k^{N+1-n}}\Bigr) e^{\frac{k}{2}\phi(x)} \|\widetilde{K}(\cdot, \oo{y})\|_{A^2(U)}.
	\end{equation}
	%Since $(K(\cdot, y), K(\cdot, y))_{L^2(M, L^k\otimes L')}=K(y, y)$, we have
	%\begin{equation*}
	%\|\widetilde{K}(\cdot, y)\|^2_{A^2(U)}\leq \widetilde{K}(y, y)=B(y)e^{k\phi(y)}\det g.
	%\end{equation*}
	Recall that the Bergman kernel function is given by $B_k(y)=|K(y,y)|_{h^k \otimes h'}=\widetilde{K}(y, \oo{y})e^{-k\phi(y)-\phi'(y)}.$  It follows from the extremal characterization of the Bergman function and the sub-mean value inequality for holomorphic sections over ball of radius roughly $\frac{1}{\sqrt{k}}$ that
	\begin{equation}\label{Bergman function upper bound}
		B_k(y)\leq Ck^n
	\end{equation}
	uniformly on $U\subset\subset M$ (see \cite[Equation (2.5)]{Ber03} and \cite[Lemma 4.1]{HKSX16}). Consequently, by \eqref{eqn widetildek a2}, $\|\widetilde{K}(\cdot, \oo{y})\|^2_{A^2(U)}$ is bounded from above by $Ck^ne^{k\phi(y)+\phi'(y)}.$ This together with \eqref{eqn u equal widetildek} yields
	\begin{equation}\label{local to global term 1}
		\widetilde{K}(x, \oo{y})=\int_U \chi(z) \widetilde{K}(z, \oo{y}) \widetilde{K}^{(N)}(x, \oo{z}) e^{-k\phi(z)-\phi'(z)} dV_g(z)+O\Bigg(\frac{1}{k^{N+1-\frac{3}{2}n}}\Bigg) e^{\frac{k}{2}\phi(x)+\frac{k}{2}\phi(y)}.
	\end{equation}
	
	Let $\Pi: L^2(M, L^k\otimes L') \rightarrow A^2(M, L^k\otimes L')$ be the Bergman projection. Next, for each fixed $x \in W$, we first define a smooth section $Q_x$ of $L^k\otimes L'$ over $M$, compactly supported in $U$, by
\begin{equation}\label{eqn qx}
Q_x(y)= \chi(y) \oo{\widetilde{K}^{(N)}(x, \oo{y})}~e_L^k(y)\otimes e_{L'}(y),  \quad \forall y \in U.
\end{equation}
We then define the smooth section $u_x$ of $L^k\otimes L'$ over $M$ as
	\begin{equation}\label{eqn defn ux}
		u_x(w)=Q_x(w)-\Pi(Q_x)(w), \quad \forall w \in M.
	\end{equation}
	%which is a section of $L^k\otimes L'$ over $M$ for any $x\in U$.
When $w=y\in U$, we can write $\Pi$ as an integral in local coordinates and obtain $u_x(y)=\widetilde{u}_x(y) e_L^k(y)\otimes e_{L'}(y)$, where
	\begin{equation}\label{u_x definition}
		\widetilde{u}_x(y)=\chi(y) \oo{\widetilde{K}^{(N)}(x, \oo{y})}-\int_U \chi(z) \oo{\widetilde{K}^{(N)}(x, \oo{z})}\widetilde{K}(y, \oo{z})  e^{-k\phi(z)-\phi'(z)} dV_g(z).
	\end{equation}
Note by \eqref{eqn defn ux}, $\dbar u_x=\dbar Q_x$. Moreover, $u=u_x$ is the $L^2$-minimal solution of the $\dbar$-equation
	\begin{equation}\label{dbar equation}
		\dbar u=\dbar Q_x.%= \dbar\left(\chi(w) \oo{\widetilde{K}^{(N)}(x, \oo{w})} e_L^k(w)\otimes e_{L'}(w) \right).
	\end{equation}
	It is easy to see $\dbar u_x$ is compactly supported in $U$, and thus $\dbar u_x\in L^2_{(0, 1)}(M, L^k\otimes L')$. Moreover, for $w=y\in U$, by \eqref{eqn qx} $\dbar u_x$ further equals
	\begin{equation}\label{two terms in the dbar equation}
		\dbar \widetilde{u}_x(y) \wedge e_L^k(y)\otimes e_{L'}(y) = \left[ \big(\dbar \chi(y)\big) \, \oo{\widetilde{K}^{(N)}(x, \oo{y})}+\chi(y)\, \dbar\oo{\widetilde{K}^{(N)}(x, \oo{y})}~\right]\wedge e_L^k(y)\otimes e_{L'}(y).
	\end{equation}
	By \eqref{local reproducing kernel}, shrinking $U$ if needed, there exists some positive constant $C=C(N)$ such that
	\begin{equation*}
		\left|\widetilde{K}^{(N)}(x, \oo{y})\right|e^{-\frac{k}{2}\phi(x)-\frac{k}{2}\phi(y)}\leq C k^ne^{-\frac{k}{2}D(x, y)}, \quad \forall x \in W, ~y\in  U,
	\end{equation*}
	where $D(x, y)=\phi(y)+\phi(x)-\psi(y, \oo{x})-\psi(x, \oo{y})$ is the Calabi's diastasis function. Since $\phi$ is strictly plurisubharnomic, we have $D(x, y) \geq 2\tau |x-y|^2$ on $U$ for some $\tau >0$.  Since $\chi=1$ on $V$, if $\dbar\chi(y)\neq 0$, then $y\in U-V$ and thus $|x-y|^2$ has a positive lower bound uniformly for $x\in W$. Therefore, there exist positive constants $C$ and $\delta$ such that $D(x, y) \geq 2 \delta$ and
	\begin{equation}\label{term 1 in dbar equation}
		\left|\big(\dbar \chi(y)\big) \, \oo{\widetilde{K}^{(N)}(x, \oo{y})}\right| e^{-\frac{k}{2}\phi(x)-\frac{k}{2}\phi(y)} \leq Ck^n e^{-\delta k}, \quad \forall x\in W, ~y\in U.
	\end{equation}
	
	On the other hand, by \eqref{local reproducing kernel} and the almost holomorphicity of $\psi(x, \oo{y})$, $\psi'(x, \oo{y})$ and each $a_j(x, \oo{y})$, for every $m>0$, there exists $C_m>0$ such that for any $x, y\in U$ (shrinking $U$ if needed), we have
	\begin{align*}
		\left|\dbar\oo{\widetilde{K}^{(N)}(x, \oo{y})}\right| e^{-\frac{k}{2}\phi(x)-\frac{k}{2}\phi(y)}\leq C_m k^ne^{-\frac{k}{2}D(x, y)} |x-y|^{2m}\leq C_m k^n e^{-\tau k|x-y|^2} |x-y|^{2m}.
	\end{align*}
	Given any $a>0$, we note that $\sup_{t\geq 0} e^{-at} t^m$, achieved at $t=\frac{m}{a}$, is $e^{-m}m^m a^{-m}$. Applying this fact to the above $e^{-\tau k|x-y|^2} |x-y|^{2m}$, we have
	\begin{align}\label{term 2 in dbar equation}
		\left|\dbar\oo{\widetilde{K}^{(N)}(x, \oo{y})}\right| e^{-\frac{k}{2}\phi(x)-\frac{k}{2}\phi(y)}\leq C_m k^{n-m}, \quad \forall x, y\in U.
	\end{align}
	By combining \eqref{two terms in the dbar equation},  \eqref{term 1 in dbar equation} and \eqref{term 2 in dbar equation}, for any $m>0$, we get
	\begin{equation}\label{u_x dbar pointwise estimate}
		|\dbar \widetilde{u}_x(y)|e^{-\frac{k}{2}\phi(x)-\frac{k}{2}\phi(y)}\leq Ck^n \left(e^{-\delta k}+C_mk^{-m}\right) \leq C_m k^{n-m}, \quad \forall y\in U, x\in W.
	\end{equation}
	Here to obtain the last inequality, we have used the fact that $e^{-\delta k}=O(\frac{1}{k^{\infty}})$ as $k\rightarrow \infty$.
	The following version of $L^2$ estimates follows directly from \cite[Chapter VIII, Theorem 6.1]{Dem97}.
	\begin{prop}\label{prop dem}
		Let $X$ be a complex manifold of dimension $n$, which admits some complete \k metric. Equip $X$ with a \k metric $g_0$, which is not necessarily complete. Let $(L_X, h_X)$ be a semi-positive line bundle over $X$. Then for any $v\in L^2_{(n, 1)}(X, L_X)$ with $\dbar v=0$ and $\int_X \langle A^{-1}v, v \rangle dV_{g_0}<\infty$, there exists $u\in L^2_{(n,0)}(X, L_X)$ such that $\dbar u=v$ and
		\begin{equation}\label{eqn dbar estimate}
			\int_X \langle u, u \rangle dV_{g_0} \leq \int_X \langle A^{-1}v, v \rangle dV_{g_0}.
		\end{equation}
		Here all pointwise Hermitian inner products $\langle \cdot, \cdot \rangle$ are taken with respect to  $g_0$ and $h_X$; and $A$ is a linear operator on $L^2_{(n, 1)}(X, L_X)$ defined in terms of $\Ric(L_X, h_X)$ and $g_0$ as in \cite[Chapter VIII, Theorem 6.1]{Dem97}.
	\end{prop}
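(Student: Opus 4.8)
The plan is to deduce Proposition~\ref{prop dem} directly from Demailly's $L^2$-existence theorem, \cite[Chapter VIII, Theorem 6.1]{Dem97}. The point is that loc.\ cit.\ is already stated in exactly the generality needed here: it requires only that the ambient K\"ahler manifold \emph{carry} a complete K\"ahler metric, whereas the K\"ahler metric used to form the $L^2$ inner products and the curvature operator $A$ need not itself be complete. Thus, applying \cite[Chapter VIII, Theorem 6.1]{Dem97} with the line bundle $(L_X,h_X)$, the K\"ahler metric $g_0$, and forms of bidegree $(n,1)$, one obtains at once the solvability of $\dbar u = v$ together with the estimate $\int_X \langle u,u\rangle\,dV_{g_0}\le \int_X \langle A^{-1}v,v\rangle\,dV_{g_0}$, with $A$ the operator described in the statement. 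I would present the proof essentially as this citation.

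For completeness I would also recall the standard reduction that shows why the non-completeness of $g_0$ is harmless; this also makes transparent where the hypothesis that $X$ admits a complete K\"ahler metric is used. Fix a complete K\"ahler metric $\widehat g$ on $X$ and put $g_\eps := g_0 + \eps\,\widehat g$ for $\eps>0$. Then each $g_\eps$ is K\"ahler (its K\"ahler form is closed), complete (it dominates $\eps\widehat g$, so geodesic distances only increase), and satisfies $g_\eps\ge g_0$. On the complete K\"ahler manifold $(X,g_\eps)$ the classical Andreotti--Vesentini/H\"ormander estimate---the complete case of \cite[Chapter VIII, Theorem 6.1]{Dem97}---provides $u_\eps\in L^2_{(n,0)}(X,L_X)$ with $\dbar u_\eps=v$ and $\int_X |u_\eps|^2_{g_\eps}\,dV_{g_\eps}\le \int_X \langle A_{g_\eps}^{-1}v,v\rangle_{g_\eps}\,dV_{g_\eps}$, where $A_{g_\eps}$ is the curvature operator formed with $g_\eps$.

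Two elementary facts then allow passage to the limit $\eps\to0^+$. First, for a section $u$ of $K_X\otimes L_X$, i.e.\ an $(n,0)$-form with values in $L_X$, the density $|u|^2_g\,dV_g$ does not depend on the K\"ahler metric $g$: locally, if $u=f\,dz_1\wedge\cdots\wedge dz_n\otimes e_{L_X}$, then $|u|^2_g\,dV_g=|f|^2\,|e_{L_X}|^2_{h_X}\,d\mu$. Hence the left-hand side above equals $\int_X |u_\eps|^2_{g_0}\,dV_{g_0}$. Second, for forms of bidegree $(n,1)$ the density $\langle A_g^{-1}v,v\rangle_g\,dV_g$ is non-increasing as the K\"ahler metric $g$ increases; this pointwise monotonicity, valid for $(n,q)$-forms with $q\ge1$, is the lemma underlying the non-complete case in \cite[Chapter VIII]{Dem97}, and the semi-positivity of $(L_X,h_X)$ guarantees $A_g\ge0$ on $(n,1)$-forms, so $A_g^{-1}$ is understood as the inverse on the orthogonal complement of $\ker A_g$. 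Therefore the right-hand side is $\le \int_X \langle A_{g_0}^{-1}v,v\rangle_{g_0}\,dV_{g_0}$, which is finite by hypothesis. Consequently $\{u_\eps\}_{\eps>0}$ is bounded in the Hilbert space $L^2_{(n,0)}(X,L_X)$ with respect to $g_0$; passing to a weakly convergent subsequence $u_{\eps_j}\rightharpoonup u$, weak lower semicontinuity of the norm gives $\int_X |u|^2_{g_0}\,dV_{g_0}\le \int_X \langle A^{-1}v,v\rangle_{g_0}\,dV_{g_0}$, and pairing the identities $\dbar u_{\eps_j}=v$ against compactly supported smooth test forms---a metric-independent pairing---shows $\dbar u=v$ in the distributional, hence classical, sense.

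The one ingredient that is not purely routine is the monotonicity of the $(n,1)$-density $\langle A_g^{-1}v,v\rangle_g\,dV_g$ in $g$; I would invoke it directly from \cite[Chapter VIII]{Dem97} rather than reprove the underlying pointwise linear-algebra estimate. Everything else---the K\"ahler-ness and completeness of $g_\eps$, the metric-independence of the $(n,0)$-density, and the weak-compactness/weak-limit argument---is standard and would be written out only briefly.
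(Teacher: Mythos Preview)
Your proposal is correct and matches the paper's approach: the paper does not prove this proposition at all but simply states that it ``follows directly from \cite[Chapter VIII, Theorem 6.1]{Dem97}.'' Your additional sketch of the approximation argument $g_\eps=g_0+\eps\widehat g$ is exactly the standard reduction behind Demailly's theorem, so it is a faithful elaboration rather than a different route.
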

Let $x \in W$ be fixed and $u_x$ be defined as in \eqref{eqn defn ux}.
%Set $v=\dbar u_x\in L^2_{(0, 1)}(M, L^k\otimes L')$ and consider the $\dbar$-equation $\dbar u=v$ with $u\in L^2(M, L^k\otimes L')$  to be solved.
Recall by the discussion above, $u_x$ is the $L^2$-minimal solution of the $\dbar$-equation $\dbar u=v:=\dbar u_x \in L^2_{(0, 1)}(M, L^k\otimes L').$ Moreover, note that
$$L^2(M, L^k\otimes L')=L^2_{(n, 0)}(M, L^k\otimes L'\otimes \mathcal{C}_M^{-1}), \quad L^2_{(0, 1)}(M, L^k\otimes L')=L^2_{(n, 1)}(M, L^k\otimes L'\otimes \mathcal{C}_M^{-1}).$$
To apply Proposition \ref{prop dem} to the $\dbar$-equation $\dbar u=v$, we set $X=M$, $g_0=g$ and $(L_X, h_X)=(L^k\otimes L'\otimes \mathcal{C}_M^{-1}, h^k\otimes h'\otimes (\det g))$. We make a key observation as follows: under the assumption \eqref{curvature assumption in TYZ expansion}, we have, when  $k>C_0$,  that $A$ is bounded below by $k-C_0$, and thus $A^{-1}$ is bounded above by $(k-C_0)^{-1}$. As before, write $u_x(y)=\widetilde{u}_x(y) e_L^k(y)\otimes e_{L'}(y)$. Then, since $u_x$ is the $L^2$-minimal solution of $\dbar u=v$,  we obtain by \eqref{eqn dbar estimate} for $k >C_0$,
	\begin{align*}
		\int_U |\widetilde{u}_x(y)|^2 e^{-k\phi(y)-\phi'(y)} dV_g(y)\leq&  \frac{1}{k-C_0} \|\dbar u_x\|^2_{L^2_{(0, 1)}(M, L^k\otimes L')}\\		
		=& \frac{1}{k-C_0} \int_U |\dbar \widetilde{u}_x(y)|_g^2 e^{-k\phi(y)-\phi'(y)} dV_g(y).
	\end{align*}
Shrinking $U$ if needed, we have that $\phi'$ is bounded on $U\subset\subset M$, and $g$ is comparable to the Euclidean metric on $U\subset\subset M$. Using these facts and \eqref{u_x dbar pointwise estimate}, we further have
	\begin{equation}\label{u_x L2 estimates}
		\left(\int_U |\widetilde{u}_x(y)|^2 e^{-k\phi(y)} d\mu(y)\right)^{\frac{1}{2}}\leq C_mk^{n-m} e^{\frac{k}{2}\phi(x)}, \quad \forall k \geq 1.
	\end{equation}
Here $C_m$ is taken sufficiently large to cover also the cases $1 \leq k \leq C_0$. Let $r:=d(W, M-U)$. By \cite[Proposition 5.1]{HS22}, for $x, y\in W\subset\subset V\subset\subset U$ we get the pointwise estimate
	\begin{align*}
		|\widetilde{u}_x(y)|e^{-\frac{k}{2}\phi(y)}\leq C \left(\frac{r}{k}\sup_{z\in U} \Bigl(\bigl|\dbar \widetilde{u}_x(z)\bigr|e^{-\frac{k\phi(z)}{2}}\Bigr) +\frac{k^n}{r^n} \biggl(\int_U |\widetilde{u}_x(z)|^2 e^{-k\phi(z)} d\mu(z)\biggr)^{\frac{1}{2}}\right).
	\end{align*}
	Note that the constants $r$ and $r^{-n}$ can be absorbed into $C$. In view of \eqref{u_x dbar pointwise estimate} and \eqref{u_x L2 estimates}, we have
	\begin{align*}
		|\widetilde{u}_x(y)|e^{-\frac{k}{2}\phi(y)}\leq  C_m {k^{2n-m}} e^{\frac{k}{2}\phi(x)}, \quad \forall x, y\in W.
	\end{align*}
	Taking $m=N+1$, we have
	\begin{equation}\label{u_x_asymptotics}
		|\widetilde{u}_x(y)|\leq \frac{C_N}{k^{N+1-2n}} e^{\frac{k}{2}\phi(x)+\frac{k}{2}\phi(y)}, \quad \forall x, y\in W.
	\end{equation}
Note that $\chi\equiv 1$ on $V$. It follows from \eqref{local to global term 1} and \eqref{u_x definition} that, for $x, y\in W\subset\subset V$, we have
\begin{equation*}
		\widetilde{u}_x(y)= \widetilde{K}(x, \oo{y})-\widetilde{K}^{(N)}(x, \oo{y}) +O\Bigg(\frac{1}{k^{N+1-\frac{3}{2}n}}\Bigg) e^{\frac{k}{2}\phi(x)+\frac{k}{2}\phi(y)}. 
	\end{equation*}
Thus, we conclude from \eqref{u_x_asymptotics} that
%Combining the above inequality with \eqref{local to global term 1} and \eqref{u_x definition}, we deduce
	\begin{equation*}
		\left| \widetilde{K}(x, \oo{y})-\widetilde{K}^{(N)}(x, \oo{y}) \right| e^{-\frac{k}{2}\phi(x)-\frac{k}{2}\phi(y)} \leq \frac{C_N}{k^{N+1-2n}}, \quad \forall x, y\in W.
	\end{equation*}
	Note that this estimate is still off by $k^n$ from our desired result. To resolve this issue, we replace $N$ by $N+n$ and get
	\begin{equation*}
		\Bigg| \widetilde{K}(x, \oo{y})-\widetilde{K}^{(N)}(x, \oo{y})- \bigg(\frac{k}{\pi}\bigg)^n e^{k\psi(x, \oo{y})+\psi'(x, \oo{y})} \sum_{j=N+1}^{N+n} \frac{a_j(x, \oo{y})}{k^j}  \Bigg| e^{-\frac{k}{2}\phi(x)-\frac{k}{2}\phi(y)} \leq \frac{C_{N+n}}{k^{N+1-n}}.
	\end{equation*}
Then we note
	\begin{align*}
		\Bigg| \bigg(\frac{k}{\pi}\bigg)^n e^{k\psi(x, \oo{y})+\psi'(x, \oo{y})}\sum_{j=N+1}^{N+n} \frac{a_j(x, \oo{y})}{k^j}  \Bigg| e^{-\frac{k}{2}\phi(x)-\frac{k}{2}\phi(y)}=O\Bigg(\frac{1}{k^{N+1-n}}\Bigg), \quad \forall x, y\in W.
	\end{align*}
Here again we have used the fact that the Calabi diastasis function satisfies $D(x, y) \geq 2\tau |x-y|^2$ on $W$ for some $\tau >0$. Consequently, we have
	\begin{equation*}
		\left| \widetilde{K}(x, \oo{y})-\widetilde{K}^{(N)}(x, \oo{y}) \right| e^{-\frac{k}{2}\phi(x)-\frac{k}{2}\phi(y)} \leq \frac{C_N}{k^{N+1-n}}, \quad \forall x, y\in W.
	\end{equation*}
The derivative estimates now follow by inductively applying the Bochner--Martinelli formula on a ball of radius roughly $\frac{1}{k}$. Details of this induction are left to the reader. This finishes the proof of Theorem \ref{Thm derivative estimates on the BK}.
\end{proof}

We are now ready to prove Proposition $\ref{Prop TYZ expansion Appendix}$.
\begin{proof}[Proof of Proposition $\ref{Prop TYZ expansion Appendix}$]
Let $(U, z)$ be a trivializing coordinate chart  as introduced before Definition \ref{defn local Bergman kernel}. Let $\widetilde{K}$ and $\widetilde{K}^{(N)}$ be as in Theorem \ref{Thm derivative estimates on the BK}.
Recall that by \eqref{local reproducing kernel}
	\begin{equation*}
		B_{k}(x)=\widetilde{K}(x, \oo{x}) e^{-k\phi(x)-\phi'(x)}=\bigg(\frac{k}{\pi}\bigg)^n\sum_{j=0}^N \frac{a_j(x, \oo{x})}{k^j}+\bigl(\widetilde{K}(x, \oo{x})-K^{(N)}(x, \oo{x}) \bigr)e^{-k\phi(x)-\phi'(x)}.
	\end{equation*}
	Thus, it is sufficient to prove that for any multi-indices $\alpha$ and $\beta$, there exists some $C_{N, \alpha, \beta}>0$ such that for any $k\geq 1$,
	\begin{equation}\label{eqn Bergman kernel vs local kernel}
		\Biggl| D_x^{\alpha}D_{\oo{x}}^{\beta}\left[\Bigl(\widetilde{K}(x, \oo{x})-\widetilde{K}^{(N)}(x, \oo{x}) \Bigr)e^{-k\phi(x)-\phi'(x)}\right]\Biggr|\leq \frac{C_{N, \alpha, \beta}}{k^{N+1-n}} \quad \text{locally uniformly for } x\in M.
	\end{equation}
	By Theorem \ref{Thm derivative estimates on the BK}, for sufficiently small neighborhood $W\subset\subset M$, there exists $C=\widehat{C}_{N, \alpha, \beta}>0$ such that for any $k\geq 1$,
	\begin{equation*}
		\Biggl| D_x^{\alpha}D_{\oo{x}}^{\beta}\left[\Bigl(\widetilde{K}(x, \oo{x})-\widetilde{K}^{(N)}(x, \oo{x}) \Bigr)e^{-k\phi(x)-\phi'(x)}\right] \Biggr| \leq \frac{\widehat{C}_{N, \alpha, \beta} k^{|\alpha|+|\beta|}}{k^{N+1-n}}, \quad \forall x\in W.
	\end{equation*}
	This misses our goal by a factor of $k^{|\alpha|+|\beta|}$. To resolve this issue, we replace $N$ by $N+|\alpha|+|\beta|$ and obtain that
	\begin{equation*}
		\Biggl| D_x^{\alpha}D_{\oo{x}}^{\beta}\left[\Bigl(\widetilde{K}(x, \oo{x})-\widetilde{K}^{(N+|\alpha|+|\beta|)}(x, \oo{x}) \Bigr)e^{-k\phi(x)-\phi'(x)}\right] \Biggr| \leq \frac{\widehat{C}_{N+|\alpha|+|\beta|, \alpha, \beta}}{k^{N+1-n}}, \quad \forall x\in W.
	\end{equation*}
	Then the desired estimates follow by noting that
	\begin{align*}
		\Biggl| D_x^{\alpha}D_{\oo{x}}^{\beta}\left[\Bigl(\widetilde{K}^{(N+|\alpha|+|\beta|)}(x, \oo{x})-\widetilde{K}^{(N)}(x, \oo{x}) \Bigr)e^{-k\phi(x)-\phi'(x)}\right] \Biggr|
		&= \bigg(\frac{k}{\pi}\bigg)^n \left| D_x^{\alpha}D_{\oo{x}}^{\beta}\sum_{j=N+1}^{N+|\alpha|+|\beta|} \frac{a_{j}(x, \oo{x})}{k^j} \right|
		\\
		&\leq \frac{\widetilde{C}_{N, \alpha, \beta}}{k^{N+1-n}}, \quad \forall x\in W,
	\end{align*}
	for some positive constant $\widetilde{C}_{N, \alpha, \beta}$ independent of $k$. To finish the proof, we follow the arguments in \cite{Lu00} to show that each $a_j$ is a (universal) polynomial in the curvature of $(M, g)$, $(L', h')$ and their covariant derivatives. Indeed, by Proposition \ref{Prop local reproducing kernel} (see also Remark \ref{RMK TYZ coefficients are curvatures}), each $a_j(x)$ is a polynomial in the derivatives of $\phi(x)$ and $\phi'(x)$. Consequently, under $K$-coordinates and $K$-frames (special types of normal coordinates and normal frames, see \cite{Lu00}) centered at any point $x_0 \in M$, each $a_j(x_0)$ is a polynomial in derivatives of the form $\partial_x^{\alpha}\partial_{\oo{x}}^{\beta}\phi(x_0)$ and $\partial_x^{\alpha}\partial_{\oo{x}}^{\beta}\phi'(x_0)$ with $|\alpha| \geq 2$ and $|\beta| \geq 2$. Each such derivative $\partial_x^{\alpha}\partial_{\oo{x}}^{\beta}\phi(x_0)$ is a polynomial in the curvature of $(M, g)$ and its covariant derivatives at $x_0$, and similarly each derivative $\partial_x^{\alpha}\partial_{\oo{x}}^{\beta}\phi'(x_0)$ is a polynomial in the curvature of $(L', h')$ and its covariant derivatives at $x_0$. Hence, $a_j(x)$ is a polynomial in the curvature of $(M, g)$, $(L', h')$ and their covariant derivatives. Moreover, such a polynomial can be obtained by finitely many algebraic operations (see \cite[Theorem 1.1]{Lu00}).
\end{proof}

%\begin{rmk}\label{rmk BergmaN kernel vs local kernel}
%By \eqref{eqn Bergman kernel vs local kernel}, we have indeed proved that  for any $N\geq 1, q \geq 0$ and $U\subset\subset M$, there exists $C_{N, q, U}$ depending on $N, q, U$, $(M, g)$ and $(L, h),$ while independent of $k$, such that
%$$\left\| B_k- e^{-k\phi}(\det g)^{-1}\widetilde{K}^{(N)}\right\|_{C^q(U)}\leq \frac{C_{N,q, U}}{k^{N+1-n}}.$$
%This will be used in the proofs later.
%\end{rmk}
We shall conclude this section with the proof of Proposition \ref{Prop TYZ expansion}.
\begin{proof}[Proof of Proposition $\ref{Prop TYZ expansion}$]
	In order to apply Proposition \ref{Prop TYZ expansion Appendix}, we set $(L', h')=(\mathcal{C}_M, H)$. Note then
	\begin{equation}
		\Ric(L', h')+\Ric(M, g)=0 \quad \text{on } M,
	\end{equation}
	and thus the curvature assumption \eqref{curvature assumption lmg} is satisfied with  $C_0=0$. Therefore, the expansion \eqref{TYZ expansion intro} follows immediately. Moreover, by Proposition \ref{Prop TYZ expansion Appendix}, each function $a_j$ is polynomial in the curvature of $(M, g)$, $(\mathcal{C}_M, H)$ and their covariant derivatives. Since the curvature of $(\mathcal{C}_M, H)$ can also be expressed as a polynomial in the curvature of $(M, g)$, it follows that each $a_{j}$ is a polynomial in the curvature of $(M, g)$ and its covariant derivatives, as desired.
\end{proof}

%\begin{rmk}
%	Theorem \ref{Thm derivative estimates on the BK} remains valid if we only assume $(L, h)$ is semi-positive on $M$ and satisfies the following curvature conditions for large enough $k$ and some \k metric $\widetilde{g}$ on $M$:
%	\begin{align}\label{eqn general condition}
%		k\Ric(L, h)+\Ric(L', h')+\Ric(M, \widetilde{g})\geq 0 \text{ on } M; \quad  k\Ric(L, h)+\Ric(L', h')+\Ric(M, \widetilde{g})>0 \text{ on } U.
%	\end{align}
%	In this case, we can apply the $L^2$ estimates in Proposition \ref{prop dem} with the \k metric $g_0=\widetilde{g}$ on $M$. The crucial point is that $\dbar u_x$ is supported in $U$, where $\widetilde{g}$ is comparable to the Euclidean metric. With this modification, the same argument yields the desired estimates. As a result, Proposition \ref{Prop TYZ expansion Appendix} also remains valid on $U$ under the assumption \eqref{eqn general condition}. Consequently, a local version of Proposition \ref{Prop TYZ expansion} still holds if we assume only that $(L, h)$ is semi-positive on $M$ and strictly positive on a small open set $U\subset\subset M$.
%\end{rmk}

%\listofchanges
%%%%%%%%%%%%%%%%%%%%%%%%%%%%%%%%%%%%%%%%%%%
%\bibliographystyle{amsalpha.bst}
\bibliographystyle{plain}
\bibliography{references}

\end{document}